\newtheorem{theorem}{Theorem}[section]
\newtheorem{lemma}[theorem]{Lemma}
\theoremstyle{definition}
\newcommand{\ensnombre}[1]{\mathbb{#1}}
\newcommand{\N}{\ensnombre{N}}
\newcommand{\R}{\ensnombre{R}}
\newcommand{\defeq}{\mathrel{\mathop:}=}
\def \P{\mathbb{P}}
\newcommand{\abs}[1]{\left\lvert #1 \right\rvert}
\newcommand{\norme}[1]{\left\lVert #1 \right\rVert}
\title{Posterior contraction rates for constrained deep Gaussian processes in density estimation and classification}
\author[1]{Fran\c{c}ois Bachoc}
\author[2]{Agn\`es Lagnoux}
\affil[1]{Institut de Math\'ematiques de Toulouse; UMR5219. Université de Toulouse; CNRS. UT3, F-31062 Toulouse, France. \hspace{3cm}
	 \url{francois.bachoc@math.univ-toulouse.fr}}
\affil[2]{Institut de Math\'ematiques de Toulouse; UMR5219. Université de Toulouse; CNRS. UT2J, F-31058 Toulouse, France.  \hspace{3cm}
   \url{lagnoux@univ-tlse2.fr}}
\begin{document}

\date{13 december 2021}

\maketitle

\begin{abstract}
We provide posterior contraction rates for constrained deep Gaussian processes in non-parametric density estimation and classification. The constraints are in the form of bounds on the values and on the  derivatives of the Gaussian processes in the layers of the composition structure.  The contraction rates are first given in a general framework, in terms of a new concentration function that we introduce and that takes the constraints into account. Then, the general framework is applied to integrated Brownian motions, Riemann-Liouville processes, and Mat\'ern processes and to standard smoothness classes of functions. In each of these examples, we can recover known minimax rates.
\end{abstract}

\noindent{\it Keywords: Gaussian priors, deep Gaussian priors, Bayesian inference, nonparametric density estimation, classification, posterior contraction, smoothness classes, Mat\'ern covariance functions.}

\noindent{\it AMS 2000 subject classification: 60G15; 62G05; 62F15}


\section{Introduction}

Gaussian processes are widely used in statistics and machine learning to model a wide range of data \cite{rasmussen06gaussian}. They have become a popular method for a
large range of applications, such as geostatistics \cite{bevilacqua2012estimating,matheron70theorie,porcu2016spatio}, numerical code approximation \cite{sacks89design,santner03design,bachoc16improvement}, calibration \cite{paulo12calibration,bachoc14calibration}, and global optimization \cite{jones98efficient}.
 In particular, they have been adopted in nonparametric Bayesian methods for constructing prior distributions for infinite-dimensional statistical models in several settings. Many related theoretical contributions were obtained since the late 2000's \cite{van2008rates} and practical contributions have been developed even before \cite{lenk1988logistic}.
We refer to the following books on the topic
\cite{rasmussen06gaussian, ghosal2017fundamentals} and the references therein. 
For instance, when considering the nonparametric estimation of a density relative to some measure $\mu$ from a sample $(X_1,\dots, X_n)$, one may consider as prior distribution the random density
\[
p_Z(x)=\frac{e^{Z(x)}}{\int e^{Z(y)}d\mu(y)},
\]
where $(Z(x))_{x\in \mathcal X}$ is a Gaussian process indexed by the space $\mathcal X$ of the observations. Here the exponential form forces the prior to weight only nonnegative functions and the renormalization permits to integrate to unity. We refer to Section \ref{ssec:density} for more details on this context and also to, e.g., \cite{rasmussen06gaussian,lenk1991towards,tokdar2007posterior,ghosal2006posterior}. 
Dealing with classification is also of interest and can be handled again using Gaussian processes. Here the estimation of the binary regression function
$\P(Y=1\vert X=x)$ 
can be done from a bivariate sample $((X_1,Y_1),\dots, (X_n,Y_n))$ and using priors of the form 
\[
f_Z(x)=\Psi(Z(x)),
\]
where $\Psi$ is a fixed measurable function from $\mathbb{R}$ to $(0,1)$.   
We refer to Section \ref{ssec:classif} and \cite{choudhuri2007nonparametric,wood1998bayesian} for more details on this context.
Other settings have also been considered in the literature as
 regression with fixed covariates \cite{van2008rates,choi2007posterior}, and white noise models  \cite{van2008rates} to name a few.
In the literature, several examples of Gaussian priors have been considered as the integrated Brownian motion \cite{van2008rates}, the Riemann-Liouville process \cite{van2008rates,castillo2008lower}, the Matérn process \cite{van2011information}, and the  exponential process \cite{Yin1991}. 

Now given a prior as illustrated in the previous paragraph and the observations, Bayes' rule leads to a posterior distribution on the function of interest defining the unknown data distribution. One is then typically interested in proving posterior consistency, meaning that the posterior distribution converges to this function of interest as the
sample size goes to infinity (see, e.g., \cite{barron1999consistency}).
Another question of interest is the one of rates of contraction of posterior distributions based on Gaussian process priors. In a nutshell, the rate of contraction of the posterior corresponds to an $\varepsilon_n$ as small as possible such that the posterior probability of the ball centered at the function of interest and of radius $\varepsilon_n$ still converges to one in probability. Other directions have been investigated. In particular, while upper bounds are avalaible and express in terms of a concentration function involving the reproducing kernel Hilbert space (RKHS) associated to the Gaussian prior, the author of \cite{castillo2008lower} exhibits lower bound counterparts. 
More recently, more flexibility has been allowed considering a randomly rescaled smooth Gaussian field as a prior that adapts to smoothness \cite{van2009adaptive}.

In this article, we consider deep Gaussian priors to reach further generality, similarly as deep neural networks
are exploited to go beyond standard shallow neural networks. The reader is referred to \cite{lee2017deep,garriga2018deep,matthews2018gaussian,dunlop2018deep}
for some references on both deep Gaussian processes and deep neural networks. 
Deep Gaussian processes, introduced in \cite{DL13}, are non-Gaussian stochastic processes, constructed from a network of Gaussian processes, similarly as neural networks. 
A deep Gaussian process is then a stochastic process from $\mathbb{R}^d$ to $\mathbb{R}$ of the form
\begin{equation} \label{eq:general:DGP}
Z_H \circ Z_{H-1} \circ \dots \circ Z_1, 
\end{equation}
where, for $h = 1 , \dots, H$, $Z_h$ is a (multivariate) Gaussian process from $\mathbb{R}^{d_{h}}$ to $\mathbb{R}^{d_{h+1}}$, with the convention $d_1 = d$ and $d_{H+1} = 1$.  In analogy to neural networks,  $Z_2,\dots,Z_{H-1}$ may be interpreted as hidden layer processes and $H$ may be called the number of layers.
In particular, since deep Gaussian processes are based on composing Gaussian processes, their mathematical analysis becomes challenging.

Deep Gaussian processes are commonly used as a prior for a function that is observed exactly. For instance, \cite{hebbal2021bayesian} tackles optimization and \cite{radaideh2020surrogate} deals with surrogate modeling of computer models with applications to nuclear engineering. 
Deep Gaussian processes are exploited as well with noisy/indirect function observations \cite{damianou2013deep,salimbeni2017doubly}. 
From a methodological point of view, deep Gaussian processes are also an efficient way to obtain more non-stationarity than simple Gaussian processes. This need for non-stationarity is largely acknowledged in the Gaussian process literature (see e.g.\  \cite{gramacy2008bayesian,marmin2018warped}). Hence, because of its practical impact, the theoretical analysis of deep Gaussian processes is beneficial.

In this work, several Gaussian processes are constrained (in terms of bounded norms for the processes themselves and their first derivatives) and then composed 
to form constrained deep Gaussian processes. The constraints are necessary for our theoretical analysis (see more details in Section \ref{ssec:setting} in the discussion of \eqref{eq:first:constraint} and \eqref{eq:second:constraint}) and may be useful in practice to exclude very irregular realizations from the Bayesian model.
Note that these bounds and derivative constraints may also be enforced to standard Gaussian processes, for a better modeling in some situations, especially with additional available expertise \cite{Golchi2015MonotoneEmulation,LopezLopera2017FiniteGPlinear,maatouk2017gaussian}. 
 In the same vein of previous works, we investigate posterior contraction for density estimation and classification. We establish original general rates of contraction extending those available for classical Gaussian processes (see Theorems \ref{th:Th3.1} and \ref{th:Th3.2}). The proofs are original and exploit the results established in \cite{van2008rates}.
In \cite{van2008rates}, the authors consider a single  Gaussian process defined on a compact space and valued in $\R$. A prior step before  proving our results is then the construction of a single global Gaussian process from the collection of the Gaussian processes involved in the deep Gaussian process prior (see Appendix \ref{app:inter}). 

In addition, we study several examples of deep Gaussian priors: integrated Brownian motions (Section \ref{ssec:integrated:brownian}) and Riemann-Liouville processes (Section \ref{ssec:riemann}) in dimension one, and Matérn processes in general dimension (Section \ref{ssec:matern}). It appears that the optimal rates are recovered in these examples for stantard classes of functions. The proofs of these results rely on the proofs of the analog results in the context of classical (single) Gaussian processes \cite{van2008rates,van2011information}.

We would like to mention that, while preparing this article, we have been aware of the independent work of Finocchio and Schmidt-Hieber \cite{finocchio2021posterior} on the same topic. Their paper presents many interesting results. In contrast to our work, they consider the problem of regression rather than density estimation and classification. They address adaptivity with respect to the smoothness and structure of the function of interest, which we do not. Their proofs are independent of the proofs of the present paper and the techniques used  are also different. To our knowledge, \cite{finocchio2021posterior} is the only already existing work providing posterior contraction results for deep Gaussian processes.

The paper is organized as follows. In Section \ref{sec:setting}, we present the setting and some notation. Section \ref{sec:posterior_contraction} is dedicated to  posterior contraction for both density estimation and classification. Examples of rates of contraction for specific function classes and priors are given in Section \ref{sec:examples}. 
 Appendix \ref{appendix:equivalent:constraints} explains how to allow for more flexibility on the constraints considered in Section \ref{sec:setting} using linear transformations of inputs and outputs. In Appendix \ref{app:inter}, we transform our deep Gaussian process into a single global real-valued Gaussian process defined on a compact space in order to apply the results of \cite{van2008rates}. 
 The proofs of the results of Sections \ref{sec:posterior_contraction} and \ref{sec:examples}, and of Appendix \ref{app:inter} are postponed to Appendix \ref{app:proofs}.

\section{Setting and preliminary notation}\label{sec:setting}

\subsection{Gaussian and deep Gaussian priors with constraints}\label{ssec:setting}

Here $\mathbb{N}$ denotes the set of natural numbers (including zero) and
$\mathbb{N}^*$ denotes the set of nonzero natural numbers.
For $k \in \mathbb{N}^*$ and $A \subset \R^k$, let $\mathcal{C}_0(A,\mathbb{R})$ be the set of continuous functions from $A$ to $\mathbb{R}$, endowed with the Borel sigma algebra of the uniform norm $\norme{\cdot{}}_{\infty}$.

Let $H \in \mathbb{N}^*\setminus \{1\}$  and $d \in \mathbb{N}^*$ be fixed. Let also $d_1 = d$ and $d_{H+1} = 1$ by convention.
For $h = 1, \dots , H$, consider a centered multivariate Gaussian process $Z_h = (Z_{h,1} , \dots Z_{h,d_{h+1}}) : \mathbb{R}^{d_h} \to \mathbb{R}^{d_{h+1}}$. Assume that $Z_1 , \dots , Z_H$ are independent, with independent components and with continuous realizations. 
Assume moreover that $Z_2 , \dots , Z_H$ have continuously differentiable realizations.

This paper deals with a deep Gaussian process obtained by composing the previous Gaussian processes $Z_h$, for $h=1,\dots,H$:
\[
Z_H\circ \dots \circ Z_1.
\]
Such a process gives a prior on the continuous functions from $\R^d$ to $\R$. 
An illustration is provided in Figure \ref{fig:gaussien_profond}.

\begin{center}
\begin{figure}[h!]
\begin{tikzpicture}[scale=.9]
\draw[->] (-4,2.5) -- (0,2.5);
\draw[->] (-4,2.5) -- (0,0.5);
\draw[->] (-4,2.5) -- (0,-1.5);

\draw[->] (-4,-1.5) -- (0,2.5);
\draw[->] (-4,-1.5) -- (0,0.5);
\draw[->] (-4,-1.5) -- (0,-1.5);

\draw[->] (1,0.5) -- (5,0.5);
\draw[->] (1,2.5) -- (5,0.5);
\draw[->] (1,-1.5) -- (5,0.5);

\draw[->] (6,0.5) -- (10,0.5);

\draw (-5,2) rectangle (-4,3);
\draw (-5,-2) rectangle (-4,-1);

\draw (0,2) rectangle (1,3);
\draw (0,0) rectangle (1,1);
\draw (0,-2) rectangle (1,-1);

\draw (5,0) rectangle (6,1);

\draw (10,0) rectangle (11,1);

\node at (-4.5,2.5) {$t_1$};
\node at (-4.5,-1.5) {$t_2$};
\node at (-4.5,3.5) {$t=(t_1,t_2)$};

\node at (.5,2.5) {$Z_{1,1}$};
\node at (.5,-1.5) {$Z_{1,3}$};
\node at (.5,0.5) {$Z_{1,2}$};
\node at (.5,3.5) {$Z_1=(Z_{1,1},Z_{1,2},Z_{1,3})$};

\node at (5.5,0.5) {$Z_2$};
\node at (10.5,0.5) {$Z_3$};
\end{tikzpicture}
\caption{Example of a deep Gaussian process from $\R^2$ to $\R$.  Here $H=3$, $d_1=d=2$, $d_2=3$, $d_3=1$, and $d_{H+1}=d_4=1$.}
\label{fig:gaussien_profond}
\end{figure}
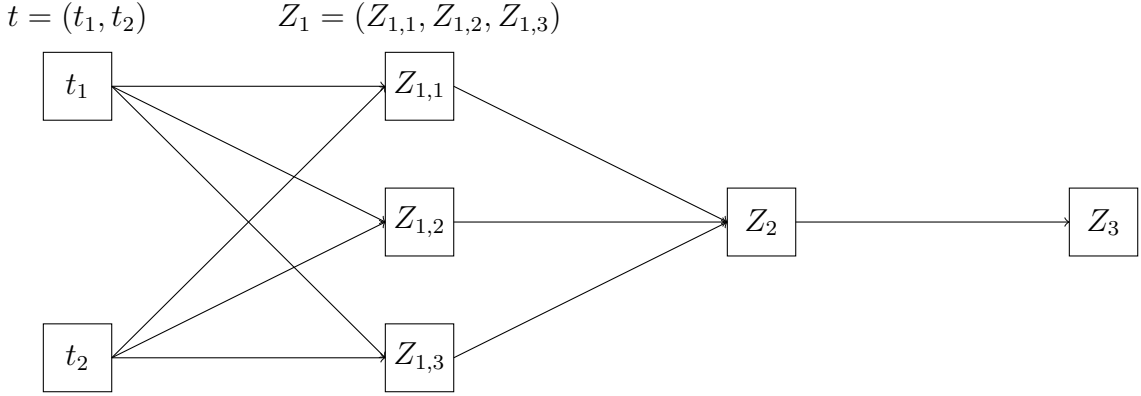
\end{center}

We consider the input space $[-1,1]^d$ for the deep Gaussian process prior $Z_H \circ \dots \circ Z_1$. 
For our proof techniques, we constrain the process $Z_h$, $h=2,\dots,H$, to also have the fixed input domain $[-1,1]^{d_h}$. The reader is referred to Appendix \ref{app:inter} for  more details.  
To do so,  we condition $Z_1 , \dots , Z_{H-1}$ by the following constraints, that we call \emph{value constraints}: for all $h=1 , \dots , H-1$, 
 \begin{equation} \label{eq:first:constraint}
 \norme{ Z_h }_{\infty,[-1,1]^{d_h}} \leqslant 1,
 \end{equation}
where we define for any $k_1,k_2 \in \mathbb{N}^*$, for any $A \subset \mathbb{R}^{k_1}$, and for any $f = (f_1 , \dots , f_{k_2}) \colon A \to \mathbb{R}^{k_2}$, 
 \[
 \norme{f}_{\infty,A} =   \sup_{t\in A} \max_{\ell=1,\dots,k_2} |f_\ell(t)|.
 \]
 
 This set of constraints has strictly positive probability (see Lemma \ref{lemma:proba:small:ball:non-zero}). 
 In addition, for technical reasons again (in particular for the proof of Lemma \ref{lem:norme:infini:composition}), we consider a second set of constraints, that we call \emph{derivative constraints}: 
for $h=2 , \dots , H$, for $i = 1 , \dots ,d_{h+1}$, and for $j = 1 , \dots , d_{h}$, there exists a fixed deterministic constant $K_{h,i,j}$ such that 
 \begin{equation} \label{eq:second:constraint}
 \norme{
  \frac{ \partial   Z_{h,i} }{\partial x_j}  
}_{\infty ,[-1,1]^{d_h}}
 \leqslant K_{h,i,j}. 
 \end{equation}
 This second set of constraints has again strictly positive probability (see Lemma \ref{lemma:proba:small:ball:non-zero} applied to the derivatives of $Z_{h,i}$ for $h=2 , \dots , H$ and for $i = 1 , \dots ,d_{h+1}$). 
As explained, the previous constraints are necessary for our theoretical analysis and may also be useful in practice to exclude very irregular realizations from the Bayesian model.
Observe that the form of the constraints in \eqref{eq:first:constraint} can be extended to more flexible bounds on the values of the components of $Z_1 , \dots , Z_{H-1}$ (see Appendix \ref{appendix:equivalent:constraints}). Outside of Appendix \ref{appendix:equivalent:constraints}, we consider the constraints \eqref{eq:first:constraint} for convenience of exposition. 

Now we index all the univariate components of $Z_1 , \dots , Z_H$, defining the finite set
 \[
 \mathcal{I}
 =
 \{
 (h, i );\,
h \in \{ 1 ,\dots , H\},
 i \in \{1 , \dots , d_{h+1}\}
 \}.
 \]

In the sequel, for $(h,i)\in \mathcal I$, we denote by $Z_{c,h,i}$ a stochastic process from $[-1,1]^{d_h}$ to $\R$ which law is that of the process $Z_{h,i}$ conditioned by \eqref{eq:first:constraint} and/or \eqref{eq:second:constraint}.   More precisely, for any Borel set $B$, for $h=2,\dots, H-1$, and for $i=1,\dots,d_{h+1}$,
\begin{align}\label{eq:Zcond}
\P&\left(Z_{c,h,i} \in B\right)\nonumber\\
&=\P\left(Z_{h,i} \in B \big\vert \norme{ Z_{h,i} }_{\infty,[-1,1]^{d_h}} \leqslant 1,\ \norme{
   \partial   Z_{h,i} / \partial x_j 
}_{\infty ,[-1,1]^{d_h}}
 \leqslant K_{h,i,j},\ j=1,\dots, d_h \right)
\end{align}
and
\begin{align}\label{eq:Zcond1H}
\begin{cases}
\P\left(Z_{c,1,i} \in B\right)=\P\left(Z_{1,i} \in B \big\vert \norme{ Z_{1,i} }_{\infty,[-1,1]^{d_1}} \leqslant 1 \right) \quad \quad \quad  \text{for $ i=1,\dots,d_2$},\\
\P\left(Z_{c,H} \in B\right)=\P\left(Z_{H} \in B \big\vert\norme{
   \partial   Z_{H} / \partial x_j 
}_{\infty ,[-1,1]^{d_H}}
 \leqslant K_{H,1,j},\ j=1,\dots, d_H  \right).
\end{cases}
\end{align}

For $h=1,\dots,H$, we let $Z_{c,h}=(Z_{c,h,1},\dots,Z_{c,h,d_{h+1}})$. Note
 that the independence of the processes $Z_{h,i}$, for $(h,i)\in \mathcal I$, yields that, for any Borel sets $B_1,\dots,B_H$,
\begin{align*}
&\P\left(Z_{c,h} \in B_h,\  h=1,\dots,H \right)\\
=&\P\bigl(Z_{h} \in B_h, \   h=1,\dots,H \big\vert \norme{ Z_{h,i} }_{\infty,[-1,1]^{d_h}} \leqslant 1,  h=1,\dots, H-1,  i=1\,\dots,d_{h+1},\\\
& \hspace{2.5cm}  \norme{
   \partial   Z_{h,i} / \partial x_j 
}_{\infty ,[-1,1]^{d_h}}
 \leqslant K_{h,i,j}\  h=2,\dots, H,\  i=1\,\dots,d_{h+1},\ j=1\,\dots,d_{h}\bigr).
\end{align*}
  
For $h=1,\dots,H$, let $\mathbb B_h$ be the Banach space of the continuous functions from $[-1,1]^{d_h}$ to $\R$ endowed with the uniform norm.
In addition, for any $h=2,\dots,H-1$ and for any $i=1,\dots,d_{h+1}$, we define the subset $\mathbb B_{c,h,i}$ of $\mathbb B_{h}$ composed by the continuously differentiable functions $z \in \mathbb{B}_h$ satisfying \eqref{eq:first:constraint} and \eqref{eq:second:constraint} (with $Z$ replaced by $z$). Similarly, for $i=1,\dots,d_2$, $\mathbb B_{c,1,i}$ stands for the subset of the functions $z \in \mathbb{B}_1$ satisfying \eqref{eq:first:constraint}
only and $\mathbb{B}_{c,H,1}$ stands for the subset of the continuously differentiable functions $z \in \mathbb{B}_H$ satisfying \eqref{eq:second:constraint} only.
Hence \eqref{eq:Zcond} and \eqref{eq:Zcond1H}
simply rewrite, for any Borel set $B\subset \mathbb B_h$ and for any $(h,i)\in \mathcal I$,
\[
\P(Z_{c,h,i} \in B)=\P(Z_{h,i} \in B \vert Z_{h,i} \in \mathbb B_{c,h,i}).
\]
The constrained deep Gaussian process prior is then given by
\[
Z_{c,H}\circ \dots \circ Z_{c,1}.
\]

\subsection{A concentration function for constrained deep Gaussian processes}\label{sec:add_notation}

For a centered Gaussian process $X$ on a space $E$, we define its RKHS $\mathbb H_X$ on $E$ from the covariance function of $X$, on $E \times E$, as in \cite[Section 2.1]{van2008reproducing}. 
Then let  $\norme{ \cdot }_{\mathbb H_X}$ be the Hilbertian norm in this RKHS.
In particular, for $(h,i) \in \mathcal{I}$, we let $k_{h,i}$ be the covariance function of $Z_{h,i}$ on $[-1,1]^{d_h}$ and $\mathbb H_{h,i}$ be the corresponding RKHS (that is thus defined as in \cite[Section 2.1]{van2008reproducing}) with RKHS-norm $\norme{ \cdot }_{\mathbb H_{h,i}}$. 
We also write $\mathcal{C}^1( [-1,1]^\ell , \mathbb{R} )$ for the
Banach space  of continuously differentiable functions $f$ from $[-1,1]^\ell$ to $\R$, equipped  with the norm $\norme{\cdot}_{\infty,1}$ defined by 
\begin{align}\label{def:norne:infini:1}
\norme{f}_{\infty,1} =
\norme{f}_{\infty}  
+ \sum_{j=1}^\ell
 \norme{ \frac{ \partial f }{ \partial x_j} }_{\infty}.
 \end{align}

In \cite{van2008rates}, the rates of contraction for classical Gaussian processes are given in terms of a concentration function involving the RKHS associated to the Gaussian process prior. More precisely, the concentration function for a single Gaussian process $X$ from a compact space $E$ to $\R$, with a continuous function $x_0$ on $E$, and $\varepsilon>0$, is given by
\begin{align}
\phi_{x_0}(\varepsilon)
= &
\underset{
\substack{ h \in \mathbb H_X \\
\norme{ h - x_0 }_{\infty} < \varepsilon 
} 
}{\inf}
\norme{h}_{\mathbb H_X}^2
-
\log
\mathbb{P}
\left(
\norme{ X }_{\infty} 
< \varepsilon
\right)\label{def:w0}
\end{align}
where $\norme{\cdot{}}_\infty$ is the uniform norm  for continuous functions from $E$ to $\R$. This function is interpreted as measuring the concentration around the fixed function $x_0$. 

\medskip

Here, we propose a novel concentration function adapted to the deep context and to the constraints. First, for $h = 2,\dots,H$ and for $i=1,\dots, d_{h+1}$, we consider $z_{0,h,i}$ in the closure of $\mathbb H_{h,i}$ in $(\mathcal{C}^1( [-1,1]^{d_h} , \mathbb{R} ) ,\norme{\cdot}_{\infty,1} )$ and for $i =1 , \dots , d_2$, we consider $z_{0,1,i}$ in the closure of $\mathbb H_{1,i}$ in $( \mathcal{C}^0( [-1,1]^{d_1} , \mathbb{R} ) , \norme{\cdot}_{\infty} )$. Second, we define, for $\varepsilon >0$,
\begin{align}
\Phi_{c,z_0}(\varepsilon)  
\defeq & 
	\sum_{i = 1 }^{d_2}
	\left(
	\frac{3}{2} 	\inf_{
	\substack{
		g \in \mathbb H_{1,i} \\
		\norme{  g - z_{0,1,i}}_{\infty} < \varepsilon
	}
} 
	\norme{  g }_{\mathbb H_{1,i}}^2
- 2 \log 
\mathbb{P}
\left(
\norme{Z_{1,i}}_{\infty} < \varepsilon 
\right)
	\right) 
\notag \\
& +	\sum_{
	\substack{
		(h,i) \in \mathcal{I} \\
		h \geqslant 2
	}
}
\Bigg( 
	\frac{3}{2}
\underset{
	\substack{
		g \in \mathbb H_{h,i} \\
		\norme{ g - z_{0,h,i} }_{\infty} < \frac{\varepsilon}{2} \\
		\norme{ \partial g / \partial x_j - \partial z_{0,h,i} / \partial x_j }_{\infty} < \frac{K_{\min}}{4}, \\
		j = 1 , \dots , d_h
	} 
}{\inf}
\norme{g}_{\mathbb H_{h,i}}^2 \notag
\\
&
- 2 \log \mathbb{P}
\Bigl(
\norme{ 
	Z_{h,i}
}_{\infty} \leqslant  \frac{\varepsilon}{2}
\Bigr)
	- 2 
\sum_{j = 1}^{d_h}
\log \mathbb{P}
\Bigl(
\norme{ 
	\partial Z_{h,i} / \partial x_j
}_{\infty}
\leqslant  \frac{K_{\min}}{4}
\Bigr)
\Bigg),\label{def:Phi_w0}
\end{align}
where 
  \begin{equation} \label{eq:Kmin}
  K_{\min} = \min_{h=2 , \dots, H}
  \min_{i=1 , \dots , d_{h+1}}
  \min_{j=1 ,\dots , d_h}
  K_{h,i,j}.
  \end{equation}

This function is interpreted as evaluating the concentration  of the processes $Z_{h,i}$ around the fixed functions $z_{0,h,i}$ for $(h,i)\in \mathcal I$, taking the constraints into account.

\section{Posterior contraction in density estimation and classification}
\label{sec:posterior_contraction}

\subsection{Density estimation} \label{ssec:density}

Consider the observation of  an i.i.d.\ sample $X_1$, \dots, $X_n$ from a fixed unknown probability density function $p_0\colon [-1,1]^d  \to (0,+\infty)$ (such that $\int_{[-1,1]^d} p_0(t)dt=1$). 
For $z \colon [-1,1]^d \to \R$, let $p_z \colon [-1,1]^d \to (0,+\infty)$ be given by
\begin{align}\label{def:dens}
p_z(t)=\frac{e^{z(t)}}{\int_{[-1,1]^d}e^{z(s)}ds}, \qquad \text{for $t\in [-1,1]^d$}.
\end{align}
Suppose that $p_0$ can be written as $ p_{z_{0,H}\circ \cdots \circ z_{0,1}}$ where $z_{0,h}=(z_{0,h,1},\dots, z_{0,h,d_{h+1}})$ for  $h=1,\dots, H$ and such that $z_{0,h,i}\in \mathbb B_{c,h,i}$ for $(h,i)\in \mathcal I$. 
As above, assume that, for $h =2,\dots,H$ and for $i=1,\dots,d_{h+1}$, $z_{0,h,i}$ is in the closure of $\mathbb H_{h,i}$ in $(\mathcal{C}^1( [-1,1]^{d_h} , \mathbb{R} ) ,\norme{\cdot}_{\infty,1} )$ and for $i =1 , \dots , d_2$, $z_{0,1,i}$ is in the closure of $\mathbb H_{1,i}$ in $( \mathcal{C}^0( [-1,1]^{d} , \mathbb{R} ) , \norme{\cdot}_{\infty} )$.

We consider the prior $p_{Z_{c,H}\circ \cdots \circ Z_{c,1}}$ on $p_0$, where, for any $h=1,\dots, H$, $Z_{c,h}$ has been defined in Section \ref{ssec:setting}. Then the posterior is given by 
\[
\P(  p_{Z_{c,H} \circ \dots \circ Z_{c,1}} \in \cdot{} ~| X_1 ,\dots , X_n ).
\] 
The posterior is a random measure on $( \mathcal{C}^0( [-1,1]^{d} , \mathbb{R})$ that depends on the observations $X_1,\dots,X_n$ and we are interested in its convergence to $p_0$ as $n \to \infty$.

Before stating the main result of this section, we recall the definition of the Hellinger distance $h$: for $f,g : [-1,1]^d \to [0,\infty)$ with $\int_{[-1,1]^d} f(t) dt = \int_{[-1,1]^d} g(t) dt = 1$, 
\begin{align}\label{def:hellinger}
h(f,g)
=
\sqrt{ 
\int_{[-1,1]^d} \left( \sqrt{ f(t)}-\sqrt{ g(t)} \right)^2
dt
}.
\end{align}

The next theorem is an extension of  \cite[Theorem 3.1]{van2008rates} to constrained deep Gaussian priors.

\begin{theorem}\label{th:Th3.1}
Let  $\Phi_{c,z_0}$ be the function defined in \eqref{def:Phi_w0} and recall $K_{\min}$ in \eqref{eq:Kmin}. Assume that, for $h=1,\dots, H-1$ and  for $i=1,\dots,d_{h+1}$, $\norme{z_{0,h,i}}_\infty < 1$ and for $h= 2,\dots,H$, for $i=1,\dots,d_{h+1}$, and for $j=1,\dots,d_h$, $\norme{\partial z_{0,h,i}/\partial x_j}_\infty\leqslant K_{\text{min}}/2$. Then,
for a sequence $(\varepsilon_{c,n})_{n\in \N}$ satisfying 
$\varepsilon_{c,n} > 0$, $\varepsilon_{c,n} \to 0$, $ n \varepsilon_{c,n}^2 \to \infty$, and 
$\Phi_{c,z_0}(\varepsilon_{c,n}) \leqslant n \varepsilon_{c,n}^2$, the posterior distribution satisfies 
\begin{equation} \label{eq:posterior:rate:density}
\mathbb{P} \left(
h(p_{Z_{c,H}\circ \cdots \circ Z_{c,1}},p_0)>M_n\varepsilon_{c,n}\vert X_1,\dots,X_n 
\right)
\underset{n\to\infty}{\to} 0
\end{equation}
in probability, for any sequence $(M_n)$ such that $M_n \to \infty$. 
\end{theorem}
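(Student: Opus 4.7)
The plan is to reduce the deep-prior problem to the single (centered) Gaussian process framework of \cite[Theorem 3.1]{van2008rates} via the global-process construction developed in Appendix \ref{app:inter}, and to verify that the deep concentration function $\Phi_{c,z_0}$ dominates the classical concentration function of that global process at the right target.

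First, I would assemble the independent univariate Gaussian processes $\{Z_{h,i} : (h,i) \in \mathcal{I}\}$ into a single centered real-valued Gaussian process $Z^\star$ indexed by the compact disjoint union $T = \bigsqcup_{h=1}^H [-1,1]^{d_h}$, with a suitable metric. The conditioning event $\mathbb{B}_c = \prod_{(h,i)\in\mathcal{I}} \mathbb{B}_{c,h,i}$ has positive probability by Lemma \ref{lemma:proba:small:ball:non-zero}, and by \eqref{eq:Zcond}--\eqref{eq:Zcond1H} the restriction of $Z^\star$ to $\mathbb{B}_c$ carries the joint law of the $Z_{c,h,i}$. The composition functional $\Psi$ mapping $(z_{h,i}) \in \mathbb{B}_c$ to $z_H \circ \cdots \circ z_1$ is well defined thanks to \eqref{eq:first:constraint}, which keeps successive images inside $[-1,1]^{d_{h+1}}$, and is Lipschitz in the uniform norm thanks to \eqref{eq:second:constraint}, which uniformly bounds the gradients; this is essentially the content of Lemma \ref{lem:norme:infini:composition}.

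Second, I would translate the hypothesis $\Phi_{c,z_0}(\varepsilon_{c,n}) \leqslant n\varepsilon_{c,n}^2$ into a bound on the classical concentration function of $Z^\star$ at the concatenation $z^\star_0 = (z_{0,h,i})$. Given approximating elements $g_{h,i} \in \mathbb{H}_{h,i}$ nearly attaining the infima in \eqref{def:Phi_w0}, their concatenation $g^\star$ lies in the RKHS of $Z^\star$ with squared norm $\sum_{(h,i)} \|g_{h,i}\|_{\mathbb{H}_{h,i}}^2$. Using the Lipschitz property of $\Psi$ and the derivative-norm closeness built into \eqref{def:Phi_w0}, $\Psi(g^\star)$ approximates $z_{0,H} \circ \cdots \circ z_{0,1}$ in uniform norm within a constant multiple of $\varepsilon$. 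On the probability side, independence factorizes the centered small-ball probability for $Z^\star$; for $h \geqslant 2$ each factor is further bounded below by the product of small-ball probabilities of $Z_{h,i}$ and of its partials at radii $\varepsilon/2$ and $K_{\min}/4$ respectively, so that $Z^\star$ being close to $g^\star$ in the relevant derivative-aware norm forces $\Psi(Z^\star)$ to be close to $\Psi(g^\star)$. The factors $3/2$ and $2$ appearing in \eqref{def:Phi_w0} arise from shifting the small-ball lower bound by $g^\star$ via Borell's inequality and from absorbing the finite additive cost $-\log \mathbb{P}(\mathbb{B}_c)$ into $n\varepsilon_{c,n}^2$. Then I would invoke \cite[Theorem 3.1]{van2008rates} on the prior induced by $\Psi(Z^\star)$ restricted to $\mathbb{B}_c$, which yields posterior contraction at rate $\varepsilon_{c,n}$; the standard Lipschitz continuity of $z \mapsto p_z$ from $(\mathcal{C}^0([-1,1]^d,\mathbb{R}),\|\cdot\|_\infty)$ into the Hellinger metric, already used in \cite{van2008rates}, then delivers \eqref{eq:posterior:rate:density}.

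The main obstacle will be the second step: carefully tracking how the derivative constraints \eqref{eq:second:constraint}, the refined infima in \eqref{def:Phi_w0}, and Borell's shifted small-ball inequality fit together. In particular, under the hypothesis $\|\partial z_{0,h,i}/\partial x_j\|_\infty \leqslant K_{\min}/2$, one must check that the RKHS approximant $g^\star$ inherits admissible derivative bounds (so that $\Psi(g^\star)$ stays within the constraint set and the Lipschitz estimate for $\Psi$ actually applies), that the Lipschitz constant of $\Psi$ propagates acceptably through the $H$ layers, and that the finite additive penalty coming from conditioning on $\mathbb{B}_c$ is negligible compared with $n\varepsilon_{c,n}^2 \to \infty$.
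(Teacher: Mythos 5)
Your high-level architecture matches the paper's: both construct one global Gaussian process from the components $Z_{h,i}$ (the paper uses $W$ on $\mathcal X = [-1,1]^{d_{\max}}\times\mathcal I$, you use a disjoint union; these are equivalent), both identify the RKHS of the global process as the direct sum of the $\mathbb H_{h,i}$, both exploit the Lipschitz property of the composition map under the value and derivative constraints (Lemma~\ref{lem:norme:infini:composition}), and both absorb the finite penalty $-\log\P(W\in\mathbb B_c)$ coming from the conditioning into $n\varepsilon_{c,n}^2$.

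The gap is in your final step, where you propose to ``invoke \cite[Theorem 3.1]{van2008rates} on the prior induced by $\Psi(Z^\star)$ restricted to $\mathbb B_c$.'' That theorem is stated and proved for a prior $p_W$ where $W$ is a (centered) Gaussian random element; its proof goes through Borell's inequality and the RKHS machinery, which require Gaussianity. Here the prior $p_{C_{W_c}}$ is built in two steps that each destroy Gaussianity: conditioning $W$ to the set $\mathbb B_c$ (a truncated Gaussian is no longer Gaussian), and then applying the nonlinear composition map $\Psi = C_{(\cdot)}$. So \cite[Theorem 3.1]{van2008rates} simply does not apply to this prior, and the leap ``which yields posterior contraction at rate $\varepsilon_{c,n}$'' is unjustified. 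What the paper does instead is to step back to the general, non-Gaussian posterior contraction theorem \cite[Theorem 8.9]{ghosal2017fundamentals} and verify its three hypotheses by hand. Concretely: a variant of \cite[Theorem 2.1]{van2008rates} is proved (Theorem~\ref{th:Th2.1}) giving sieve sets $B_{c,n}\subset\mathbb B_c$ with a sup-norm entropy bound, a remaining-mass bound, and a prior-mass lower bound for $W_c$ --- all of which survive the conditioning because the conditioning factor $\P(W\in\mathbb B_c)$ is a fixed positive constant; then Lemma~\ref{lem:Lem3.1}, built on the Lipschitz estimate of Lemma~\ref{lem:norme:infini:composition}, transports that sup-norm information through the nonlinear maps $w\mapsto C_w\mapsto p_{C_w}$ into Hellinger-entropy, $K$, and $V$ bounds, which is precisely what \cite[Theorem 8.9]{ghosal2017fundamentals} needs. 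Your proposal needs to be reworked along those lines rather than appealing to the Gaussian-specific result.
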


Remark that in Theorem \ref{th:Th3.1}, the conditions $\norme{z_{0,h,i}}_\infty < 1$ and $\norme{\partial z_{0,h,i}/\partial x_j}_\infty\leqslant K_{\text{min}}/2$ are not restrictive since, for a given $p_0$, $z_{0,H}$ can be taken with large enough values and derivatives values, to compensate for the bounded values of $z_{0,1} , \dots , z_{0,H-1}$, and then the $K_{h,i,j}$ in \eqref{eq:Kmin} can be taken large enough. Recall also that Appendix \ref{appendix:equivalent:constraints} shows that the value bound $1$ can be replaced by arbitrary large bounds, up to linear changes of inputs and outputs of the Gaussian processes.

\subsection{Classification}\label{ssec:classif}

Consider the observation of an i.i.d.\ sample $(U_1,V_1) , \dots , (U_n , V_n)$ distributed as $(U,V)$ where $U$ is a random variable in $[-1,1]^d$ and $V$ is a binary random variable ($V \in \{ 0 , 1 \}$) such that $\P( V=1 | U ) = f_0 (U) $, with a fixed unknown function $f_0 : [-1,1]^d \to (0,1)$. Consider a function $\Psi : \mathbb{R} \to (0,1)$ such that $\Psi$ has a bounded derivative $\Psi'$  on $\R$ and such that $\Psi' / (\Psi (1 - \Psi))$ is bounded on $\R$.   For instance, one may use the standard logistic function $\Psi(x)=e^x/(1+e^x)$. 
For $z\colon [-1,1]^d \to \R$, let $f_z \colon [-1,1]^d \to (0,1)$ be given by
\begin{align}\label{def:classif}
	f_z(t)= \Psi(z(t)), \qquad \text{for $t\in [-1,1]^d$}.
\end{align}

Suppose that $f_0$ can be written as $f_{z_{0,H}\circ \cdots \circ z_{0,1}}$ where $z_{0,h}=(z_{0,h,1},\dots, z_{0,h,d_{h+1}})$ for  $h=1,\dots, H$ and such that $z_{0,h,i}\in \mathbb B_{c,h,i}$ for $(h,i)\in \mathcal I$. Further, as above, assume that, for $h = 2,\dots,H$  and for $i=1,\dots,d_{h+1}$, $z_{0,h,i}$ is in the closure of $\mathbb H_{h,i}$ in $(\mathcal{C}^1( [-1,1]^{d_h} , \mathbb{R} ) ,\norme{\cdot}_{\infty,1} )$ and for $i =1 , \dots , d_2$, $z_{0,1,i}$ is in the closure of $\mathbb H_{1,i}$ in $( \mathcal{C}^0( [-1,1]^{d} , \mathbb{R} ) , \norme{\cdot}_{\infty} )$.

We consider the prior $f_{Z_{c,H}\circ \cdots \circ Z_{c,1}}$, where, for $h=1,\dots, H$, $Z_{c,h}$ has been defined in Section \ref{ssec:setting}. Then the posterior is given by
\[
\P(  f_{Z_{c,H} \circ \dots \circ Z_{c,1}} \in \cdot{} ~| 
(U_1,V_1) , \dots , (U_n , V_n)
 ).
\] 
Again, the posterior is a random measure on $( \mathcal{C}^0( [-1,1]^{d} , \mathbb{R})$ that depends on the observations $(U_1,V_1),\dots,(U_n,V_n)$ and we are interested in its convergence to $f_0$ as $n\to \infty$.

We let $ \norme{g}_{2,U} $ be the $L^2$ norm of a function $g : [-1,1]^d \to \R$ with respect to the law of $U$.
The next theorem is an extension of  \cite[Theorem 3.2]{van2008rates} (see also \cite[Theorem 11.22]{ghosal2017fundamentals}) to constrained deep Gaussian priors. 

\begin{theorem}\label{th:Th3.2}
Consider the same setting and the same assumptions as in Theorem \ref{th:Th3.1} for $\Phi_{c,z_0}$, $K_{\min}$, and $(z_{0,h,i})_{(h,i)\in \mathcal I}$. 
Then, for a sequence $(\varepsilon_{c,n})_{n\in \N}$ satisfying 
	$\varepsilon_{c,n} > 0$, $\varepsilon_{c,n} \to 0$, $ n \varepsilon_{c,n}^2 \to \infty$, and 
	$\Phi_{c,z_0}(\varepsilon_{c,n}) \leqslant n \varepsilon_{c,n}^2$, the posterior distribution satisfies 
	\begin{equation} \label{eq:posterior:rate:classification}
		\mathbb{P} \left(
		\norme{f_{Z_{c,H}\circ \cdots \circ Z_{c,1}} - f_0 }_{2,U}
		>M_n\varepsilon_{c,n}\vert (U_1,V_1),\dots,(U_n,V_n)
		\right)
		\underset{n\to\infty}{\to} 0
	\end{equation}
	in probability, for any sequence $(M_n)$ such that $M_n \to \infty$. 
\end{theorem}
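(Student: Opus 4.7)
The plan is to mirror the proof strategy of Theorem \ref{th:Th3.1} (density estimation) in the classification framework. Following Appendix \ref{app:inter}, I would first reinterpret the constrained deep process $Z_{c,H}\circ\cdots\circ Z_{c,1}$ through a single centered, real-valued Gaussian process $W$ indexed by a compact metric space, together with a deterministic transformation recovering the composed process. This reduction brings us inside the scope of \cite[Theorem 3.2]{van2008rates}, which delivers an analogue of \eqref{eq:posterior:rate:classification} for a single Gaussian prior transformed by $\Psi$, provided the classical single-process concentration function $\phi_{w_0}$ around the function $w_0$ induced by $z_{0,H}\circ\cdots\circ z_{0,1}$ is bounded by $n\veps_{c,n}^2$ at the rate $\veps_{c,n}$.

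The heart of the proof is thus to verify $\phi_{w_0}(\veps)\lesssim \Phi_{c,z_0}(\veps)$. The approximation part of $\phi_{w_0}$ is handled by selecting, for each $(h,i)\in\mathcal I$, an element $g_{h,i}$ of $\mathbb H_{h,i}$ nearly attaining the infimum in the $(h,i)$-th summand of \eqref{def:Phi_w0}, and invoking Lemma \ref{lem:norme:infini:composition} to show that the composition $g_H\circ\cdots\circ g_1$ lies within sup-norm distance $\veps$ of $z_{0,H}\circ\cdots\circ z_{0,1}$. Independence of the components across $(h,i)$ makes the RKHS squared norm of the full approximant decompose as a sum, and makes the small-ball probability of $W$ factorise into products, which become sums after taking logs; together these match the two parts of \eqref{def:Phi_w0}. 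The hypotheses $\norme{z_{0,h,i}}_\infty<1$ and $\norme{\partial z_{0,h,i}/\partial x_j}_\infty \leqslant K_{\min}/2$ leave just enough slack for these approximants to remain admissible under \eqref{eq:first:constraint} and \eqref{eq:second:constraint}.

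The main obstacle will be the propagation of perturbations through the composition chain: when each component $Z_{c,h,i}$ deviates from $g_{h,i}$ by at most $\veps/2$ in sup norm and its partial derivatives deviate by at most $K_{\min}/4$, one must show that the composed deviation remains $\mathcal{O}(\veps)$ in sup norm, while simultaneously ensuring that intermediate images stay inside $[-1,1]^{d_h}$ so that the constraints retain their meaning. This is precisely the reason for the two-part structure of $\Phi_{c,z_0}$ (value small-balls of radius $\veps/2$, derivative small-balls of radius $K_{\min}/4$) and for the two sets of constraints combined: the derivative control furnishes Lipschitz constants along the composition, and the value control furnishes admissible inputs for the next layer.

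The only step that genuinely differs from the proof of Theorem \ref{th:Th3.1} is the transfer from sup-norm closeness of $z$ to $L^2_U$ closeness of $f_z=\Psi(z)$. This is immediate from the boundedness of $\Psi'$, since $\norme{f_z-f_0}_{2,U} \leqslant \norme{\Psi'}_\infty\,\norme{z-z_{0,H}\circ\cdots\circ z_{0,1}}_\infty$, so a posterior ball of sup-norm radius $M_n\veps_{c,n}$ around $z_{0,H}\circ\cdots\circ z_{0,1}$ is mapped into a posterior ball of $L^2_U$-radius of the same order around $f_0$. Once \cite[Theorem 3.2]{van2008rates} is invoked on $W$ with this Lipschitz translation in hand, \eqref{eq:posterior:rate:classification} follows for any sequence $M_n\to\infty$.
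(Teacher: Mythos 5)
Your outline gets the scaffolding right in several places (reduction to the global process $W$ on $\mathcal X=[-1,1]^{d_{\max}}\times\mathcal I$, the independence-based decomposition that bounds the constrained concentration function by $\Phi_{c,z_0}$, and the recognition that Lemma~\ref{lem:norme:infini:composition} is central), but the route you propose for closing the argument has a genuine gap.

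The key problem is the sentence ``this reduction brings us inside the scope of \cite[Theorem~3.2]{van2008rates}, which delivers an analogue of \eqref{eq:posterior:rate:classification} for a single Gaussian prior transformed by $\Psi$'' together with ``once \cite[Theorem~3.2]{van2008rates} is invoked on $W$ with this Lipschitz translation in hand, \eqref{eq:posterior:rate:classification} follows.'' That theorem requires the prior on the binary-regression function to be of the form $\Psi(\text{Gaussian process})$. Here the prior is $f_{Z_{c,H}\circ\cdots\circ Z_{c,1}}=\Psi(C_{W_c})$, where $C$ is the nonlinear composition operator \eqref{def:Cpsi} and $W_c$ is the (non-Gaussian) conditioned version of $W$. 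Neither $C$ nor the conditioning can be absorbed into the pointwise link $\Psi$: the object $\Psi(W)$ never enters the Bayesian model, there is no posterior for it, and a post-hoc ``Lipschitz translation'' cannot convert conclusions about one posterior into conclusions about another. In addition, $W$ lives on $\mathcal X$ whereas the covariate $U$ lives on $[-1,1]^d$, so the classification problem to which you would be applying \cite[Theorem~3.2]{van2008rates} is not the one at hand. What is actually required is a return to a \emph{general} posterior-contraction master theorem, namely \cite[Theorem~8.9]{ghosal2017fundamentals}, applied to the actual prior $\Psi(C_{W_c})$: the three conditions of that theorem (entropy of a sieve, small prior mass outside the sieve, and prior mass of Kullback--Leibler neighbourhoods of $L_{z_{0,H}\circ\cdots\circ z_{0,1}}$) are verified by constructing sieves $B_{c,n}\subset\mathbb B_c$ at the level of $W_c$ via Theorem~\ref{th:Th2.1}, and then transporting sup-norm covers and balls in $\mathbb B_c$ to $L^2_U$/KL/$V$ statements about $L_{C_w}$ via Lemma~\ref{lem:Lem3.2:classif}.

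Two further refinements to your outline. First, the composition propagation (your ``main obstacle'') is not used in bounding the concentration function: Theorem~\ref{thm:analyzing:phi} establishes $\phi_{c,w_0}\leqslant\Phi_{c,z_0}$ entirely layer-by-layer, with no composition appearing, because the norm on $\mathbb B$ is the per-layer sup norm. Lemma~\ref{lem:norme:infini:composition} enters only when one converts $\|v-w\|_\infty$ control on elements of $\mathbb B_c$ into $\|C_v-C_w\|_\infty$ control, which is precisely what feeds the entropy and KL-neighbourhood conditions. Second, the estimate $\|f_z-f_0\|_{2,U}\leqslant\|\Psi'\|_\infty\,\|z-z_{0,H}\circ\cdots\circ z_{0,1}\|_\infty$ is not the only classification-specific ingredient: to check the prior-mass condition one needs the Kullback--Leibler divergence $K$ and the second-moment $V$ of the log-likelihood ratio to be controlled, which uses the separate assumption that $\Psi'/(\Psi(1-\Psi))$ is bounded (see Lemma~\ref{lem:Lem3.2:classif}); your sketch omits this entirely.
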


\section{Examples of rates of contraction for specific function classes and specific priors}\label{sec:examples}

In this section, we apply Theorems \ref{th:Th3.1} and \ref{th:Th3.2} to classical examples of Gaussian processes for $Z_1, \dots, Z_H$  and to classical examples of function classes for $z_{0,1},\dots,z_{0,H}$. These examples are by no means exhaustive, and Theorems \ref{th:Th3.1} and \ref{th:Th3.2} would enable to obtain contraction rates in other settings as well.

\subsection{Integrated Brownian motion processes in dimension one} \label{ssec:integrated:brownian}

First consider independent Brownian motions as studied in \cite{van2008rates} for standard (non deep) Gaussian processes.  
Here, we let $d_1 = \dots = d_{H+1} = 1$, that is we consider the composition of univariate processes.  
Thus we simply have $Z_h=Z_{h,1}$  for $h=1,\dots,H$ and the constants $K_{h,i,j}$ involved in the constraints \eqref{eq:second:constraint} will simply be denoted by $K_h$. For $x \in \mathbb{R}$, let $\lfloor x \rfloor $ be the largest integer strictly smaller than $x$. 
For $\beta >0$,
let $\mathcal{F}^{\beta}([-1,1],\mathbb{R})$ be the set of functions from $[-1,1]$ to $\mathbb{R}$ that have derivatives up to order $\lfloor \beta \rfloor$ and which derivative of order $\lfloor \beta \rfloor$ is $\beta - \lfloor \beta \rfloor$-H\"older. The space  $\mathcal{F}^{\beta}([-1,1],\mathbb{R})$ is called the H\"older space of order $\beta$.

For a continuous function $g : [0 , \infty) \to \mathbb{R}$, we let $I^0 g = g$ and, for $\ell \in \mathbb{N}^*$, we define by induction $I^\ell g : [0 , \infty) \to \mathbb{R}$ by, for $t \in [0,\infty) $, $(I^\ell g) (t) = \int_0^t (I^{\ell-1} g) (s) ds$. 
We let $N_1 \in \mathbb{N}$ and, for $h =2 , \dots , H$, $N_{h} \in \mathbb{N}^\star$. We also let $Z_h$ be the Gaussian process defined by
\[
Z_h(t) =  (I^{N_{h}} B_{h}) (t+1)
+ \sum_{\ell = 0}^{N_{h}} X_{h,\ell} \frac{ (t+1)^\ell}{\ell!}, \qquad \text{for $t \in (-1,\infty)$},
\]
where $B_{h}$ is a standard Brownian motion on $[0 , \infty)$ and where $X_{h,0}, \dots , X_{h,N_h}$ are independent standard Gaussian variables that are also independent of $B_{h}$. Remark that $Z_h$ is then defined on $(-1,\infty)$, is $N_h$-times differentiable, and has value and derivatives values at $t=-1$ given by $X_{h,0},\dots, X_{h,N_h}$. 

In order to fit with the formalism of Sections \ref{sec:setting} and \ref{sec:posterior_contraction}, we now extend $Z_{1},\dots, Z_{h}$ to the real line. This extension is done simply with a constant or a linear function. This choice is arbitrary and has no impact on the constrained deep Gaussian process priors constructed as in Sections \ref{sec:setting} and \ref{sec:posterior_contraction}. 
  If $N_1 = 0$, we extend the definition of $Z_1(t)$ for $t \in (- \infty,-1)$ by taking the value at $t = -1$. If $N_{h} \geqslant 1$, we extend the definition of $Z_h(t)$ for $t \in (- \infty,-1)$ by a linear function based on the values of  $Z_h$ and its derivative at $t = -1$. After this, $Z_h$ is a continuous Gaussian process on $\mathbb{R}$ (continuously differentiable if $N_{h} \geqslant 1$). 

As in Sections \ref{sec:setting} and \ref{sec:posterior_contraction}, we let $Z_1 , \dots , Z_H$ be independent. The next theorem then provides the rates of contraction given by Theorems \ref{th:Th3.1} and \ref{th:Th3.2}.

\begin{theorem} \label{theorem:rate:holder}
	First consider the setting of Theorem \ref{th:Th3.1}. 
Assume that $p_0$ is strictly positive and belongs to $\mathcal{F}^{\beta}([-1,1],\mathbb{R})$ for a fixed $\beta > 0$.
 Assume that $\beta  \leqslant N_1 + 1/2$ and that $\beta \leqslant N_h$ for $h=2 , \dots , H$. 
Then the fixed constants $K_1 , \dots , K_H$ in \eqref{eq:second:constraint} can be chosen large enough so that the conclusion of Theorem \ref{th:Th3.1} holds with 
\[
\varepsilon_{c,n}
=
C_{\sup} 
n^{-  \beta/(2 N_1  + 2 )}
\]
for some constant $C_{\sup}$ that does not depend on $n$.
Second, consider the setting of Theorem \ref{th:Th3.2}, with $f_0 = f_{g_0}$ where $g_0$ belongs to $\mathcal{F}^{\beta}([-1,1],\mathbb{R})$, with the same conditions on $\beta , N_1 , \dots , N_H$. Then the first conclusion again holds, with the same expression for $\varepsilon_{c,n}$. 
\end{theorem}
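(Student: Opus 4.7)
The plan is to reduce the problem to the single Gaussian process setting of \cite{van2008rates} by decomposing the target function $g_0$ --- equal to $\log p_0$ minus a normalization constant in the density setting, and equal to the pre-link function in the classification setting (so that $f_0 = f_{g_0}$) --- as $g_0 = z_{0,H}\circ\cdots\circ z_{0,1}$ in such a way that all the H\"older regularity is concentrated in the first layer and the remaining $z_{0,h}$ are affine. Then the concentration function $\Phi_{c,z_0}$ defined in \eqref{def:Phi_w0} splits into one nontrivial term coming from layer $1$, which produces the rate, and a collection of terms that are either constants or strictly dominated.

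Concretely I would set $z_{0,1}(t) = \sigma g_0(t)$ with $\sigma \in (0, 1/(1+\norme{g_0}_\infty))$, $z_{0,h}(u) = a_h u$ with $a_h \in (0,1)$ for $h = 2,\dots,H-1$, and $z_{0,H}(u) = u/M$ with $M = \sigma\prod_{h=2}^{H-1}a_h$. Then $z_{0,H}\circ\cdots\circ z_{0,1} = g_0$ on $[-1,1]$, and $\norme{z_{0,h}}_\infty < 1$ holds for $h = 1,\dots, H-1$. The derivatives of $z_{0,h}$ for $h\geq 2$ are the constants $a_h$ or $1/M$, which can be bounded by $K_{\min}/2$ provided the $K_{h,i,j}$ in \eqref{eq:second:constraint} are chosen large enough, as permitted by the theorem. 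Moreover every $z_{0,h}$ with $h\geq 2$ is a polynomial of degree $\leq 1 \leq N_h$, hence lies in $\mathbb{H}_h$, while $z_{0,1}$ is continuous and therefore lies in the closure of $\mathbb{H}_1$ in $\mathcal{C}^0([-1,1],\mathbb{R})$.

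Then I would control $\Phi_{c,z_0}(\varepsilon)$ summand by summand. For $h\geq 2$ the infimum defining the RKHS approximation is bounded by the finite, $\varepsilon$-independent constant $\norme{z_{0,h}}_{\mathbb{H}_h}^2$, achieved at $g = z_{0,h}$ itself since its derivative is constant and thus automatically satisfies the derivative constraint in \eqref{def:Phi_w0}; the derivative small-ball term $-\log\mathbb{P}(\norme{\partial Z_h/\partial x}_\infty \leq K_{\min}/4)$ is a finite constant because $K_{\min}/4$ is fixed and strictly positive; and $-\log\mathbb{P}(\norme{Z_h}_\infty \leq \varepsilon/2) \lesssim \varepsilon^{-2/(2N_h+1)}$ by the classical small-ball asymptotics for $N_h$-fold integrated Brownian motion. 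For $h = 1$, the RKHS approximation estimate for H\"older functions from \cite{van2008rates} yields $\inf_{\norme{g - z_{0,1}}_\infty < \varepsilon}\norme{g}_{\mathbb{H}_1}^2 \lesssim \varepsilon^{-2(N_1+1-\beta)/\beta}$, together with $-\log\mathbb{P}(\norme{Z_1}_\infty < \varepsilon) \lesssim \varepsilon^{-2/(2N_1+1)}$.

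Using the assumptions $\beta \leq N_1 + 1/2$ and $\beta \leq N_h$ for $h \geq 2$, elementary exponent comparisons show that $\varepsilon^{-2/(2N_h+1)} \leq C\,\varepsilon^{-2(N_1+1-\beta)/\beta}$ for every $h$, so $\Phi_{c,z_0}(\varepsilon) \lesssim \varepsilon^{-2(N_1+1-\beta)/\beta}$. Solving $\Phi_{c,z_0}(\varepsilon_{c,n}) \leq n\varepsilon_{c,n}^2$ gives exactly $\varepsilon_{c,n} = C_{\sup} n^{-\beta/(2N_1+2)}$, and the result follows from Theorem \ref{th:Th3.1} in the density setting and from Theorem \ref{th:Th3.2} in the classification setting. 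The main technical input I expect to rely on is the RKHS approximation of a H\"older function by elements of the $N_1$-fold integrated BM RKHS in the first layer; once this classical estimate is invoked, the rest amounts to bookkeeping of constants, since the affine structure of the remaining layers forces their RKHS approximation terms to be $O(1)$ and their small-ball contributions to be subdominant.
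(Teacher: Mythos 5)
Your proposal is correct and follows essentially the same route as the paper: concentrate the H\"older function in the first layer (suitably scaled so that the sup-norm constraint holds), take affine maps in all later layers so that their RKHS-approximation terms reduce to the fixed constants $\norme{z_{0,h}}_{\mathbb{H}_h}^2$, invoke the classical small-ball and RKHS-approximation bounds for integrated Brownian motion, and use $\beta\leqslant N_1+1/2$ and $\beta\leqslant N_h$ to show the layer-$1$ approximation term dominates $\Phi_{c,z_0}(\varepsilon)$. The only cosmetic difference is your choice of generic scaling factors $\sigma$ and $a_h$ where the paper fixes $\sigma=1/(2\norme{\log p_0}_\infty)$ and $a_h=1$; the resulting composition and the exponent bookkeeping are identical.
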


From Theorem \ref{theorem:rate:holder} with the condition $\beta \leqslant N_1+1/2$, the rate of contraction is fastest when $N_1 + 1/2=  \beta $ (which is possible when $\beta = k +  1/2 , k \in \mathbb{N},$ and means that the smoothness of $Z_1$ matches that of the fixed unknown function), in which case we recover the classical minimax rate $n^{-\beta/(2\beta+1)}$  (see \cite[Theorem 4.1 and below]{van2008rates}). For a larger $N_1$, the rate deteriorates (oversmooth prior). Note that the processes $Z_2,\dots,Z_H$ are chosen smoother than the fixed unknown function from the condition $N_h\geqslant \beta$ for $h=2,\dots,H$.

\subsection{Riemann-Liouville processes in dimension one}\label{ssec:riemann}

While the smoothness index of the Brownian motion is necessarily of the form integer + 1/2, any smoothness index can be reached with the Riemann-Liouville process as explained in \cite{van2008rates}. 
As in Section \ref{ssec:integrated:brownian}, in the sequel,
we let $d_1 = \dots = d_{H+1} = 1$, we simply have $Z_hZ_{h,1}$, and we simply denote $K_h$ for $K_{h,1,1}$ in \eqref{eq:second:constraint}, $h=1,\dots,H$.  

For $x \in \mathbb{R}$, let $\underline{x}$ be the largest integer smaller or equal to $x$. For $h  = 1 , \dots , H$,  
we set $\alpha_1 >0$ and for $h=2,\dots,H$, $\alpha_{h} > 1$. 
We let $B_h$ be a standard Brownian motion on $[0,\infty)$. We define $R_{h}$ as the Gaussian process on $[ 0 ,\infty)$ defined by $R_{h}(t) = \int_0^t (t-s)^{\alpha_h-1/2} d B_h(s)$
and then the Gaussian process $Z_h$ on $[-1,\infty)$ by 
\[
Z_h(t) = R_{h}(t+1) + \sum_{\ell=0}^{\underline{\alpha}_h+1  } X_{h,\ell} \frac{(t + 1)^\ell}{\ell !}  ,
\]
 for $t \in [-1,\infty)$ and where  $ X_{h,1} , \dots ,  X_{h, \underline{\alpha}_h+1}$ are independent standard Gaussian variables. We then extend $Z_h$ on $(- \infty,1)$
as in Section \ref{ssec:integrated:brownian}.

Again as in Section \ref{ssec:integrated:brownian}, we let $Z_1, \dots , Z_H$ be independent. The next theorem then provides the rates of contraction given by Theorems \ref{th:Th3.1} and \ref{th:Th3.2}.

\begin{theorem} \label{theorem:rate:riemann:liouville}
	First consider the setting of Theorem \ref{th:Th3.1}. 
	Assume that $p_0$ is strictly positive and belongs to $\mathcal{F}^{\alpha_1}([-1,1],\mathbb{R})$. Assume that for $h  = 2 , \dots , H$, $\alpha_ h \geqslant \alpha_1$.	
Then the fixed constants $K_1 , \dots , K_H$ in \eqref{eq:second:constraint} can be chosen large enough so that the conclusion of Theorem \ref{th:Th3.1} holds with 
	\[
	\varepsilon_{c,n}
	=
	C_{\sup} 
	n^{-  \alpha_1/(2 \alpha_1  + 1)}
	\]
	for some constant $C_{\sup}$ that does not depend on $n$.
	Second, consider the setting of Theorem \ref{th:Th3.2}, with $f_0 = f_{g_0}$ where $g_0$ belongs to $\mathcal{F}^{\alpha_1}([-1,1],\mathbb{R})$, with the same conditions on $ \alpha_1 , \dots , \alpha_H$. Then the first conclusion again holds, with the same expression for $\varepsilon_{c,n}$. 
\end{theorem}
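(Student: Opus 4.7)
The plan is to invoke Theorems \ref{th:Th3.1} and \ref{th:Th3.2} after proving that the deep concentration function from \eqref{def:Phi_w0} satisfies $\Phi_{c,z_0}(\varepsilon)\lesssim \varepsilon^{-1/\alpha_1}$ for a suitable choice of layerwise decomposition $z_{0,1},\dots,z_{0,H}$. Solving $\varepsilon_{c,n}^{-1/\alpha_1}\asymp n\varepsilon_{c,n}^2$ then immediately yields the announced rate $\varepsilon_{c,n}=C_{\sup}\,n^{-\alpha_1/(2\alpha_1+1)}$ in both the density and classification settings.

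For the decomposition, I would take $z_{0,1}(t)=a(\log p_0(t)+c)$ in the density setting (respectively $z_{0,1}(t)=ag_0(t)+b$ in the classification setting) with scalars chosen so that $\norme{z_{0,1}}_\infty<1$, then set $z_{0,h}(t)=t$ for $h=2,\dots,H-1$, and finally pick an affine $z_{0,H}$ making the composition equal to $\log p_0$ (resp.\ $g_0$). Since $p_0$ is strictly positive and in $\mathcal{F}^{\alpha_1}([-1,1],\mathbb{R})$, $\log p_0$ inherits this regularity and so does $z_{0,1}$. Each other $z_{0,h}$ is affine, hence $C^\infty$, and the value constraint $\norme{z_{0,h}}_\infty<1$ (for $h\le H-1$) together with the derivative constraint $\norme{\partial z_{0,h}/\partial x}_\infty\le K_{\min}/2$ (for $h\ge 2$) are met as soon as $K_1,\dots,K_H$ are large enough. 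Moreover, each affine $z_{0,h}$ for $h\ge 2$ belongs to $\mathbb{H}_{h,1}$, because the RL prior contains the parametric part $\sum_{\ell=0}^{\underline{\alpha}_h+1} X_{h,\ell}(t+1)^\ell/\ell!$, which already generates all affine functions.

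For the concentration bound, I would control each summand of \eqref{def:Phi_w0} separately. The $h=1$ contribution coincides up to multiplicative constants with the classical concentration function of the univariate RL process of smoothness $\alpha_1$ at the $\mathcal{F}^{\alpha_1}$-function $z_{0,1}$, which by the analysis in \cite{van2008rates} is $O(\varepsilon^{-1/\alpha_1})$. For $h\ge 2$, taking $g=z_{0,h}$ in the RKHS infimum yields the fixed constant $\norme{z_{0,h}}_{\mathbb{H}_{h,1}}^2$; the auxiliary constraint $\norme{\partial g/\partial x-\partial z_{0,h}/\partial x}_\infty<K_{\min}/4$ is then trivially satisfied. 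The small-ball term $-\log\mathbb{P}(\norme{Z_h}_\infty\le \varepsilon/2)$ is $O(\varepsilon^{-1/\alpha_h})=O(\varepsilon^{-1/\alpha_1})$ because $\alpha_h\ge \alpha_1$, whereas $-\log\mathbb{P}(\norme{\partial Z_h/\partial x}_\infty\le K_{\min}/4)$ is a bounded constant once $K_{\min}$ is large enough, since $\partial Z_h/\partial x$ is itself a centered continuous Gaussian process on $[-1,1]$. Summing over the finite index set $\mathcal{I}$ then yields $\Phi_{c,z_0}(\varepsilon)\le C\varepsilon^{-1/\alpha_1}$.

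The main obstacle, as in Section \ref{ssec:integrated:brownian}, is the careful transfer of the shallow RL concentration estimates from \cite{van2008rates} into the present deep framework while respecting the value and derivative constraints: one must calibrate $a,b,c$ together with the fixed thresholds $K_1,\dots,K_H$ so that every hypothesis of Theorems \ref{th:Th3.1} and \ref{th:Th3.2} is satisfied simultaneously and so that the derivative small-ball terms remain $O(1)$. Once these choices are settled, the inequality $\Phi_{c,z_0}(\varepsilon_{c,n})\le n\varepsilon_{c,n}^2$ reduces to $\varepsilon_{c,n}^{-1/\alpha_1}\le n\varepsilon_{c,n}^2$, whose solution is precisely $\varepsilon_{c,n}=C_{\sup}\,n^{-\alpha_1/(2\alpha_1+1)}$, completing both parts of the theorem.
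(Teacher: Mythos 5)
Your proposal is correct and follows essentially the same route as the paper: the same layerwise decomposition with an $\mathcal{F}^{\alpha_1}$ first layer and affine subsequent layers, the same observation that affine functions lie in $\mathbb H_h$ via the polynomial part of the Riemann--Liouville prior, the same transfer of the shallow small-ball and approximation bounds from \cite{van2008rates} (Theorem~4.3) for the $h=1$ term, and the same $O(1)$ treatment of the derivative small-ball terms, with $\alpha_h\geqslant\alpha_1$ ensuring the $h\geqslant 2$ small-ball terms are dominated. No genuinely different idea is introduced.
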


Note that the smoothness of $Z_1$ exactly matches that of the fixed unknown function and the processes $Z_2,\dots,Z_H$ have a smoothness that is larger than or equal to that of the fixed unknown function.
This enables to recover the standard minimax rate $n^{-\alpha_1/(2\alpha_1+1)}$  (see \cite[Theorem 4.3 and below]{van2008rates}).

\subsection{Mat\'ern processes in general dimension }\label{ssec:matern}

Here we consider Gaussian processes with Mat\'ern covariance functions, which allows for  both arbitrary input dimension and arbitrary smoothness. These processes were studied in \cite[Section 3.1]{van2011information} for standard (non deep) Gaussian processes.

Let us first extend the definition of $\mathcal{F}^{\beta}([-1,1],\mathbb{R})$ to larger dimensions.
For  $\beta >0$ and $\ell \in \mathbb{N}^*$, we
let  $\mathcal{F}^{\beta}([-1,1]^\ell,\mathbb{R})$ be the set of functions from $[-1,1]^\ell$ to $\mathbb{R}$ for which all the partial derivatives of order $(a_1,\dots,a_\ell)$ exist for all $a_1 , \dots , a_\ell \in \N$ with $a_1 + \dots + a_\ell \leqslant \lfloor \beta \rfloor$ and which partial derivatives of order $\lfloor \beta \rfloor$ are $\beta - \lfloor \beta \rfloor$-H\"older.

For $(h,i) \in \mathcal{I}$, we let $Z_{h,i}$ have Mat\'ern covariance function, that is, for $u,v \in \R^{d_h}$, 
\[
\mathrm{Cov}
\left(
Z_{h,i}(u)
,
Z_{h,i}(v)
\right)
=
\int_{\R^d} e^{i\lambda^\top(u-v)}m_{h,i}(\lambda)d\lambda,
\]
where
\begin{equation} \label{eq:Mhi}
m_{h,i}(\lambda)=\frac{1}{(1+\norme{\lambda}^2)^{\alpha_{h,i}+d/2}}
\end{equation}
is called the spectral density and $\alpha_{h,i} >0$ is called the smoothness parameter. A Gaussian process on $[-1,1]^\ell$ ($\ell \in \N^\star$) with Mat\'ern covariance function  with $\alpha_{h,i} =  \alpha$ in \eqref{eq:Mhi}  has sample paths in $\mathcal{F}^\beta ([-1,1]^{\ell},\R)$ for any $\beta<\alpha$. 

Then, as in \cite{van2011information}, we define the Sobolev space $\mathcal{H}^\beta([-1,1]^\ell,\R)$ as the set of functions $f$ from $[-1,1]^\ell$ to $\R$ that are restrictions of functions $f$ from
$\R^\ell \to \R$ with Fourier transform $\hat f(\lambda)=(2\pi)^{-\ell}\int e^{i\lambda^\top t} f(t)dt$ such that, defining
\[
\norme{f}^2_{ \beta } \defeq \int_{\mathbb{R}^d} (1+\norme{\lambda}^2)^\beta \abs{\hat f^2(\lambda)} d\lambda < \infty.
\]

The next theorem provides the rates of contraction given by Theorems \ref{th:Th3.1} and \ref{th:Th3.2}.

\begin{theorem} \label{theorem:rate:matern}
	First consider the setting of Theorem \ref{th:Th3.1}. 
Assume that $p_0$ is strictly positive and belongs to $\mathcal{F}^{\beta}([-1,1]^d,\mathbb{R})\cap \mathcal{H}^{\beta}([-1,1]^d,\R)$ for some $\beta>0$.
Assume that, for $(h,i) \in \mathcal{I}$, $\beta \leqslant \alpha_{h,i}$. 
Then the fixed constants $K_{h,i,j}$ for $h=2,\dots, H$, for $i=1,\dots,d_{h+1}$, and for $j=1,\dots,d_h$ in \eqref{eq:second:constraint} can be chosen large enough such that the conclusion of Theorem \ref{th:Th3.1} holds with 
\[
\varepsilon_{c,n}
=
C_{\sup} 
n^{-  \beta /
 (2 \alpha_{1,\min}  + d)},
\]
where $\alpha_{1,\min} = \min(\alpha_{1,1} , \dots , \alpha_{1,d_2})$ and
$C_{\sup}$ is some constant that does not depend on $n$.

Second, consider the setting of Theorem \ref{th:Th3.2}, with $f_0 = f_{g_0}$ where $g_0$ belongs to $\mathcal{F}^{\beta}([-1,1]^d,\mathbb{R})\cap \mathcal{H}^{\beta}([-1,1]^d,\R)$, with the same conditions on $\beta , K_{h,i,j}, \alpha_{h,i}$ for $h=1,\dots, H$, for $i=1,\dots,d_{h+1}$, and for $j=1,\dots,d_h$. Then the first conclusion again holds, with the same expression for $\varepsilon_{c,n}$. 
\end{theorem}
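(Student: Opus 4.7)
The plan is to verify the hypotheses of Theorems \ref{th:Th3.1} and \ref{th:Th3.2} by supplying an explicit factorization of the target and bounding the concentration function $\Phi_{c,z_0}$ of \eqref{def:Phi_w0} using the Mat\'ern small-ball and RKHS approximation estimates of \cite{van2011information}, in analogy with the proofs of Theorems \ref{theorem:rate:holder} and \ref{theorem:rate:riemann:liouville}.

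\emph{Factorization.} Let $w_0$ denote $\log p_0$ in the density case and $g_0$ in the classification case; in both cases $w_0\in\mathcal F^\beta([-1,1]^d,\R)\cap\mathcal H^\beta([-1,1]^d,\R)$, since $p_0$ is strictly positive with the stated regularity (so that $\log p_0$ inherits the same smoothness). Choose $\gamma\in(0,1)$ with $\gamma\norme{w_0}_\infty<1$, pick $i_\star\in\{1,\dots,d_2\}$ with $\alpha_{1,i_\star}=\alpha_{1,\min}$, and set $z_{0,1,i_\star}=\gamma w_0$, $z_{0,1,i}\equiv 0$ for $i\neq i_\star$, $z_{0,h}(y)=(y_{i_\star},0,\dots,0)$ for $2\leqslant h\leqslant H-1$, and $z_{0,H}(y)=\gamma^{-1}y_1$. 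Then $z_{0,H}\circ\cdots\circ z_{0,1}\equiv w_0$, so $p_0=p_{z_{0,H}\circ\cdots\circ z_{0,1}}$ and $f_0=f_{z_{0,H}\circ\cdots\circ z_{0,1}}$. The constraints $\norme{z_{0,h,i}}_\infty<1$ for $h\leqslant H-1$ hold by construction, and the derivative bounds $\norme{\partial z_{0,h,i}/\partial x_j}_\infty\leqslant K_{\min}/2$ hold provided $K_{\min}\geqslant 2$ and $K_H\geqslant 2\gamma^{-1}$, which is allowed by enlarging the $K_{h,i,j}$. Since $\beta\leqslant\alpha_{1,i_\star}$, a standard spectral-truncation argument shows that $\gamma w_0$ lies in the sup-norm closure of $\mathbb H_{1,i_\star}$, while the affine $z_{0,h,i}$ for $h\geqslant 2$ are polynomials and hence belong to $\mathbb H_{h,i}$ itself (the Mat\'ern RKHS on a bounded domain contains all polynomials).

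\emph{Bounding $\Phi_{c,z_0}$.} For $(1,i_\star)$, the Mat\'ern concentration-function estimate from \cite[proof of Theorem 5]{van2011information} gives
\[ \inf_{g\in\mathbb H_{1,i_\star},\,\norme{g-\gamma w_0}_\infty<\varepsilon}\norme{g}_{\mathbb H_{1,i_\star}}^2-\log\P(\norme{Z_{1,i_\star}}_\infty<\varepsilon)\lesssim\varepsilon^{-(2\alpha_{1,\min}+d-2\beta)/\beta}. \]
For pairs $(1,i)$ with $i\neq i_\star$, the RKHS infimum vanishes (choose $g=0$) and the small-ball term contributes $\lesssim\varepsilon^{-d/\alpha_{1,i}}$. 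For $h\geqslant 2$, the target $z_{0,h,i}$ is itself in $\mathbb H_{h,i}$, so the joint infimum in \eqref{def:Phi_w0} is bounded by $\norme{z_{0,h,i}}_{\mathbb H_{h,i}}^2=O(1)$; the value small ball at scale $\varepsilon/2$ is $\lesssim\varepsilon^{-d_h/\alpha_{h,i}}$ and the derivative small balls at the fixed threshold $K_{\min}/4$ are $O(1)$ in $\varepsilon$. Because $\alpha_{h,i}\geqslant\beta$ throughout, every contribution is dominated as $\varepsilon\to 0$ by $\varepsilon^{-(2\alpha_{1,\min}+d-2\beta)/\beta}$, whence $\Phi_{c,z_0}(\varepsilon)\lesssim\varepsilon^{-(2\alpha_{1,\min}+d-2\beta)/\beta}$. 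Solving $\Phi_{c,z_0}(\varepsilon_{c,n})\leqslant n\varepsilon_{c,n}^2$ then yields $\varepsilon_{c,n}=C_{\sup}\,n^{-\beta/(2\alpha_{1,\min}+d)}$, and Theorems \ref{th:Th3.1} and \ref{th:Th3.2} deliver the announced posterior contraction rates.

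\emph{Main obstacle.} The principal technical point is adapting the Mat\'ern concentration function bound of \cite{van2011information} to the joint value-plus-derivative form of \eqref{def:Phi_w0}. For $(1,i_\star)$, only a sup-norm infimum is involved and the classical argument applies directly. For $h\geqslant 2$, the affine choice makes the joint infimum trivially $O(1)$, so what remains is to verify the small-ball lower bounds for each $\partial Z_{h,i}/\partial x_j$ at the constant threshold $K_{\min}/4$; this reduces to the standard small-ball estimate for a Mat\'ern process of smoothness $\alpha_{h,i}-1$ evaluated at a fixed positive radius and is uniformly bounded in $\varepsilon$. The delicate bookkeeping will be to combine all $|\mathcal I|$ contributions so that only the dominant term $\varepsilon^{-(2\alpha_{1,\min}+d-2\beta)/\beta}$ survives, which in turn uses the hypothesis $\alpha_{h,i}\geqslant\beta$ everywhere.
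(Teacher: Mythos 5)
Your proposal matches the paper's proof essentially exactly: both use the same factorization (the component of layer one with the smallest smoothness $\alpha_{1,\min}$ carries a rescaled $\log p_0$, the remaining first-layer components are zero, intermediate layers are affine coordinate projections, and the final layer undoes the rescaling), both invoke the Mat\'ern RKHS-closure and small-ball estimates of \cite{van2011information} to bound each summand in $\Phi_{c,z_0}$, and both observe that under $\alpha_{h,i}\geqslant\beta$ everything is dominated by the $\varepsilon^{-(2\alpha_{1,\min}+d-2\beta)/\beta}$ term arising from the first layer. Inverting $\Phi_{c,z_0}(\varepsilon_{c,n})\leqslant n\varepsilon_{c,n}^2$ then gives the stated rate just as in the paper.
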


Above, the Gaussian process priors are taken smoother than the fixed unknown function. With the appropriate smoothness $\alpha_{1,\min}  = \beta$ for the first layer, we recover the classic optimal rate $n^{- \beta / (2 \beta + d)}$ as pointed out in \cite[Theorem 5]{van2011information}. 

\section{Conclusion}

In this paper, we have provided rates of contraction for the posterior distribution of a deep Gaussian process prior, with constraints on the values and the derivatives. These results are the first to address density estimation and classification, to the best of our knowledge. Recently, \cite{finocchio2021posterior} addressed contraction rates for deep Gaussian processes in regression. 
These general rates (Theorems \ref{th:Th3.1} and \ref{th:Th3.2})  measure in terms of a new concentration function, that takes the constraints into account.  The proofs of these general rates are based on constructing a single global Gaussian process prior in order to exploit existing results for standard Gaussian processes \cite{van2008rates}. Some of the techniques for taking the constraints into account could be considered of independent interest (in particular the proofs in Section \ref{ssec:proof:analyzing:phi}). 

We show in three examples how the general rates enable to recover existing minimax rates for standard function classes and Gaussian processes. These examples are not exhaustive. In future work, it would be beneficial to exploit Theorems \ref{th:Th3.1} and \ref{th:Th3.2} in other settings, with more involved function classes based on composition structures, in the aim of obtaining contraction rates that are out of reach for a single Gaussian  process prior, thus further illustrating the flexibility benefit brought by deep Gaussian processes. Finally, adaptivity to function smoothness is an important topic for future work as well. 

\appendix

\section{More flexibility on the constraints by linear transformations of inputs and outputs} \label{appendix:equivalent:constraints}

Here, we consider that $Z_1,\dots,Z_H$ satisfy constraints of the form \eqref{eq:second:constraint} but the constraints $\norme{ Z_h }_{\infty,[-1,1]^{d_h}} \leqslant 1$ are replaced by more general constraints: for $h=1,\dots, H-1$ and for $i=1,\dots,d_{h+1}$, we assume
\begin{align}\label{eq:first:constraint:general}
\norme{ Z_{h,i} }_{\infty , \prod_{\ell=1,\dots,d_h} [-L_{h,\ell} , L_{h,\ell}]} \leqslant L_{h+1,i}
\end{align}
where the constants $L_{h+1,i}>0$ (for $h=1,\dots,H-1$ and for $i=1,\dots,d_{h+1}$) are arbitrary and $L_{1,1} = \dots = L_{1,d_1}=1$.

We show that we can construct processes $Y_1,\dots,Y_H$ that provide the same composition
\[
Y_H\circ \dots \circ Y_1 = Z_H\circ \dots \circ Z_1,
\]
that have constraints of the form \eqref{eq:first:constraint} and \eqref{eq:second:constraint}, and that are obtained from linear transformations of inputs and outputs. 
Indeed, define the Gaussian processes $Y_1 , \dots , Y_H$ as follows. For $h=1,\dots,H-1$, for $t_1 , \dots , t_{d_h} \in \mathbb{R}$, and for $i = 1 , \dots , d_{h+1}$, let
\begin{equation} \label{eq:fromY:to:Z:one}
	Y_{h,i}(  t_1 , \dots , t_{d_h} )
	=
	\frac{1}{L_{h+1,i} }
	Z_{h,i} \left(
	L_{h,1}  t_1 ,
	\dots,
	L_{h,d_h}  t_{d_h}
	\right).
\end{equation}
Recall that $d_{H+1}=1$ and let
\begin{equation} \label{eq:fromY:to:Z:two}
	Y_{H}(t_1,\dots t_{d_H})
	=
	Z_H \left(
	L_{H,1}  t_1,
	\dots,
	L_{H,d_H}  t_{d_H}  
	\right), \qquad \text{for $t_1,\dots,t_{d_H} \in \mathbb{R}$}.
\end{equation}

The next lemma then shows  the composition equality.

\begin{lemma} \label{lemma:invariance:one}
	For any $t_1 , \dots , t_d \in \mathbb{R}$, we have
	\[
	Y_H \circ \dots \circ Y_1 (t_1 , \dots , t_d) 
	=
	Z_H \circ \dots \circ Z_1 (t_1 , \dots , t_d) 
	\]
recalling the convention $d_1=d$.
\end{lemma}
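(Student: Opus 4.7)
The plan is to prove the identity by induction on the depth of composition, showing that the scaling factors $L_{h,\ell}$ applied to inputs in the definition of $Y_h$ are designed precisely to cancel the inverse scalings $1/L_{h,i}$ inherited from the outputs of the previous layer.

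\textbf{Inductive claim.} For every $h \in \{1,\dots,H-1\}$, every $i \in \{1,\dots,d_{h+1}\}$, and every $(t_1,\dots,t_d) \in \mathbb{R}^d$,
\[
\bigl(Y_h \circ \dots \circ Y_1\bigr)_i (t_1,\dots,t_d)
= \frac{1}{L_{h+1,i}} \bigl(Z_h \circ \dots \circ Z_1\bigr)_i (t_1,\dots,t_d).
\]

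\textbf{Base case.} For $h=1$, since $L_{1,1} = \dots = L_{1,d_1} = 1$, the definition \eqref{eq:fromY:to:Z:one} gives immediately $Y_{1,i}(t_1,\dots,t_d) = \frac{1}{L_{2,i}} Z_{1,i}(t_1,\dots,t_d)$.

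\textbf{Inductive step.} Assume the claim holds at depth $h-1$, with $2 \leqslant h \leqslant H-1$. Denote $u_\ell = (Y_{h-1} \circ \dots \circ Y_1)_\ell(t_1,\dots,t_d)$ for $\ell = 1,\dots,d_h$. By the inductive hypothesis, $u_\ell = \frac{1}{L_{h,\ell}} (Z_{h-1} \circ \dots \circ Z_1)_\ell(t_1,\dots,t_d)$. Applying \eqref{eq:fromY:to:Z:one} at level $h$, I compute
\[
\bigl(Y_h \circ \dots \circ Y_1\bigr)_i (t_1,\dots,t_d)
= Y_{h,i}(u_1,\dots,u_{d_h})
= \frac{1}{L_{h+1,i}} Z_{h,i}\bigl( L_{h,1} u_1, \dots, L_{h,d_h} u_{d_h} \bigr).
\]
The input scaling $L_{h,\ell}$ exactly cancels the factor $1/L_{h,\ell}$ in $u_\ell$, so $L_{h,\ell} u_\ell = (Z_{h-1} \circ \dots \circ Z_1)_\ell(t_1,\dots,t_d)$, and the right-hand side equals $\frac{1}{L_{h+1,i}} (Z_h \circ \dots \circ Z_1)_i(t_1,\dots,t_d)$, closing the induction.

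\textbf{Final layer.} Apply the claim at $h = H-1$ to obtain the vector input to $Y_H$, then invoke \eqref{eq:fromY:to:Z:two}: each coordinate of this input carries a factor $1/L_{H,\ell}$, and the input scalings $L_{H,\ell}$ in the definition of $Y_H$ again cancel them, leaving exactly $Z_H\bigl((Z_{H-1}\circ\dots\circ Z_1)_1(t),\dots,(Z_{H-1}\circ\dots\circ Z_1)_{d_H}(t)\bigr) = (Z_H \circ \dots \circ Z_1)(t_1,\dots,t_d)$.

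There is no real obstacle here: the lemma is an elementary algebraic cancellation, and the only care required is index bookkeeping (the output dimension of $Y_h$ is $d_{h+1}$ while the input dimension is $d_h$, with $L_{1,\ell} = 1$ taking care of the boundary at $h=1$ and the absence of an output rescaling at $h=H$ since $d_{H+1}=1$ and no further layer consumes this output).
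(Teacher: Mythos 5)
Your proof is correct and takes essentially the same route as the paper: establish by induction the intermediate identity $(Y_h\circ\cdots\circ Y_1)_i = \frac{1}{L_{h+1,i}}(Z_h\circ\cdots\circ Z_1)_i$ for $h\leqslant H-1$, then close with the final layer $Y_H$ where the input scalings $L_{H,\ell}$ absorb the residual factors $1/L_{H,\ell}$. This is precisely the paper's argument.
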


\begin{proof}[Proof of Lemma \ref{lemma:invariance:one}]
First let us prove by induction that, for $h = 1 , \dots H-1$, for $i =1 , \dots , d_{h+1}$, and for $t_1 , \dots , t_d \in \mathbb{R}$, 
	\begin{equation} \label{eq:invariance:by:induction}
		Y_{h,i} \circ Y_{h-1} \circ \dots \circ Y_1 (t_1 , \dots , t_d) 
		=
		\frac{1}{ L_{h+1,i} }
		Z_{h,i} \circ Z_{h-1} \circ \dots \circ Z_1 (t_1 , \dots , t_d).
	\end{equation}
	For $h=1$, this is true from \eqref{eq:fromY:to:Z:one} with $L_{1,1} = \dots = L_{1,d_1} = 1$ and $d_1=d$. Assume that this is true for some $h \in \{  1 , \dots H-2 \}$. Thus by \eqref{eq:invariance:by:induction} and then by  \eqref{eq:fromY:to:Z:one}, for $i =1 , \dots , d_{h+1}$,
	\begin{small}\begin{align*}
		&	Y_{h+1,i} \circ \dots \circ Y_1 (t_1 , \dots , t_d)
		\\
		&		=
		Y_{h+1,i} 
		\left( 
		Y_{h,1 } \circ Y_{h-1} \circ \dots \circ Y_1 (t_1 , \dots , t_d),
		\dots,
		Y_{h,d_{h+1} } \circ Y_{h-1} \circ \dots \circ Y_1 (t_1 , \dots , t_d)
		\right)
		\\
		& = 
		Y_{h+1,i} 
		\left( 
		\frac{1}{L_{h+1,1}}		 Z_{h,1 } \circ Z_{h-1} \circ \dots \circ Z_1 (t_1 , \dots , t_d),
		\dots,
		\frac{1}{L_{h+1,d_{h+1}}}		 Z_{h,d_{h+1} } \circ Z_{h-1} \circ \dots \circ Z_1 (t_1 , \dots , t_d)
		\right)
		\\
		& =
		\frac{1}{L_{h+2},i}
		Z_{h+1,i} 
		\left( 
		Z_{h,1 } \circ Z_{h-1} \circ  \dots \circ Z_1 (t_1 , \dots , t_d),
		\dots,
		Z_{h,d_{h+1} } \circ Z_{h-1} \circ  \dots \circ Z_1 (t_1 , \dots , t_d)
		\right)
		\\
		& = 
		\frac{1}{L_{h+2},i}
		Z_{h+1,i} \circ Z_{h} \circ \dots \circ Z_1 (t_1 , \dots , t_d).
	\end{align*}
	\end{small}
This concludes the proof of \eqref{eq:invariance:by:induction} by induction. 	
Finally, from \eqref{eq:invariance:by:induction} and  \eqref{eq:fromY:to:Z:two}, we obtain, for $t_1 , \dots , t_d \in \mathbb{R}$,
	\begin{small}
	\begin{align*}
		& Y_H \circ \dots \circ Y_1 (t_1 , \dots , t_d)  \\
		&		= 
		Y_H
		\left(
		Y_{H-1,1} \circ Y_{H-2} \circ \dots \circ Y_1(t_1 , \dots , t_d)
		,
		\dots , 
		Y_{H-1,d_H} \circ Y_{H-2} \circ \dots \circ Y_1(t_1 , \dots , t_d)
		\right)
		\\
		& = 
		Y_H
		\left(
		\frac{1}{L_{H,1}}
		Z_{H-1,1} \circ Z_{H-2} \circ \dots \circ Z_1(t_1 , \dots , t_d)
		,
		\dots , 
		\frac{1}{L_{H,d_H}}
		Z_{H-1,d_H} \circ Z_{H-2} \circ \dots \circ Z_1(t_1 , \dots , t_d)
		\right)
		\\
		& = 
		Z_H
		\left(
		Z_{H-1,1} \circ Z_{H-2} \circ \dots \circ Z_1(t_1 , \dots , t_d)
		,
		\dots , 
		Z_{H-1,d_H} \circ Z_{H-2} \circ \dots \circ Z_1(t_1 , \dots , t_d)
		\right)
		\\
		& = 
		Z_{H} \circ Z_{H-1} \circ \dots \circ Z_1( t_1 , \dots , t_d ).\qedhere
	\end{align*}
	\end{small}
\end{proof}

The next lemma shows that $Y_1,\dots, Y_H$ satisfy constraints of the form \eqref{eq:first:constraint}  when $Z_1,\dots, Z_H$ satisfy the more general ones given in \eqref{eq:first:constraint:general}.

\begin{lemma} \label{lemma:invariance:two}
	The two following assertions are equivalent.
	\begin{enumerate}
		\item For $h = 1 , \dots,H-1$ and for $i = 1 ,\dots , d_{h+1}$,
		\[
		\norme{ Z_{h,i} }_{\infty , \prod_{\ell=1,\dots,d_h} [-L_{h,\ell} , L_{h,\ell}]} \leqslant L_{h+1,i}.
		\]
		\item For $h = 1 , \dots,H-1$ and for $t \in [-1 , 1]^{d_h} $, $Y_h(t) \in [-1,1]^{d_{h+1}}$.
	\end{enumerate}
\end{lemma}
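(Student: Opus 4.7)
The plan is to prove this equivalence by a direct change of variables using the relation \eqref{eq:fromY:to:Z:one}, which is the only ingredient required. The key observation is that the map $(t_1,\dots,t_{d_h}) \mapsto (L_{h,1} t_1,\dots,L_{h,d_h} t_{d_h})$ is a bijection between $[-1,1]^{d_h}$ and $\prod_{\ell=1,\dots,d_h}[-L_{h,\ell},L_{h,\ell}]$, since all the constants $L_{h,\ell}$ are strictly positive (this is given in the setup: $L_{h+1,i}>0$ are arbitrary and $L_{1,1}=\dots=L_{1,d_1}=1>0$).

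I would fix $h \in \{1,\dots,H-1\}$ and $i \in \{1,\dots,d_{h+1}\}$ and treat the two implications separately. For $(1)\Rightarrow(2)$, take any $t=(t_1,\dots,t_{d_h}) \in [-1,1]^{d_h}$. Then $L_{h,\ell} t_\ell \in [-L_{h,\ell},L_{h,\ell}]$ for each $\ell$, so Assertion 1 yields $|Z_{h,i}(L_{h,1}t_1,\dots,L_{h,d_h}t_{d_h})| \leqslant L_{h+1,i}$. Dividing by $L_{h+1,i}>0$ and invoking \eqref{eq:fromY:to:Z:one} gives $|Y_{h,i}(t)|\leqslant 1$. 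Since this holds for every component $i=1,\dots,d_{h+1}$, we conclude $Y_h(t)\in[-1,1]^{d_{h+1}}$.

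For the reverse direction $(2)\Rightarrow(1)$, I would take an arbitrary $s=(s_1,\dots,s_{d_h}) \in \prod_{\ell}[-L_{h,\ell},L_{h,\ell}]$ and define $t_\ell = s_\ell/L_{h,\ell}$, so that $t=(t_1,\dots,t_{d_h}) \in [-1,1]^{d_h}$. Applying Assertion 2 gives $|Y_{h,i}(t)|\leqslant 1$, and then \eqref{eq:fromY:to:Z:one} rewritten as $Z_{h,i}(s_1,\dots,s_{d_h}) = L_{h+1,i}\, Y_{h,i}(t_1,\dots,t_{d_h})$ yields $|Z_{h,i}(s)|\leqslant L_{h+1,i}$. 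Taking the supremum over $s$ gives Assertion 1.

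There is no real obstacle here: the lemma is essentially a tautological reformulation of the normalization encoded in \eqref{eq:fromY:to:Z:one}, and the only thing to be careful about is keeping track of which $L$-constants appear as input rescalings versus output rescalings, and ensuring strict positivity so that the change of variables is bijective. The whole argument fits in a few lines once the bijection is set up.
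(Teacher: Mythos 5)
Your proof is correct and takes essentially the same route as the paper: rewrite assertion (2) componentwise as $\norme{Y_{h,i}}_{\infty,[-1,1]^{d_h}}\leqslant 1$, then observe that the equivalence with assertion (1) is immediate from the rescaling \eqref{eq:fromY:to:Z:one} since $t\mapsto (L_{h,1}t_1,\dots,L_{h,d_h}t_{d_h})$ is a bijection between the two domains. You simply spell out the two inclusions that the paper summarizes as ``follows from \eqref{eq:fromY:to:Z:one}.''
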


\begin{proof}[Proof of Lemma \ref{lemma:invariance:two}]
	The second assertion can be written as:  for $h = 1 , \dots,H-1$ and $i = 1 ,\dots , d_{h+1}$,
	\[
	\norme{ Y_{h,i} }_{\infty ,  [-1,1]^{d_h}} \leqslant 1.
	\]
	Hence, to prove the lemma, it is sufficient to show that, for $h = 1 , \dots,H-1$ and $i = 1 ,\dots , d_{h+1}$,
	\[
	\norme{ Z_{h,i} }_{\infty , \prod_{\ell=1,\dots,d_h} [-L_{h,\ell} , L_{h,\ell}]} \leqslant L_{h+1,i} 
	\Leftrightarrow
	\norme{ Y_{h,i} }_{\infty ,  [-1,1]^{d_h}} \leqslant 1.
	\]
	The latter equivalence follows from \eqref{eq:fromY:to:Z:one}. 
\end{proof}

Finally, the next lemma shows that $Y_1,\dots,Y_H$ and $Z_1,\dots,Z_H$
equivalently satisfy constraints of the form \eqref{eq:second:constraint}. 

\begin{lemma} \label{lemma:invariance:three}

For $h = 2 , \dots,H$, for $i = 1 , \dots , d_{h+1}$, and for $j = 1 ,\dots , d_h$,
		\[
		\norme{
		\frac{ \partial Z_{h,i} }{\partial x_j}   
		}_{\infty , \prod_{\ell=1,\dots,d_h} [-L_{h,\ell} , L_{h,\ell} ]}
		\leqslant 
	K_{h,i,j}
		\Longleftrightarrow		
				\norme{
		\frac{ \partial Y_{h,i}  }{\partial x_j}   
		}_{\infty , [-1,1]^{d_h}}
		\leqslant   \frac{L_{h,j} }{L_{+1,i}} K_{h,i,j}. 
		\]

\end{lemma}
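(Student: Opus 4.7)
The plan is elementary: apply the chain rule to \eqref{eq:fromY:to:Z:one} (and to \eqref{eq:fromY:to:Z:two} in the case $h=H$, read with the natural convention $L_{H+1,1}=1$ since $d_{H+1}=1$ and the output is not rescaled in that formula), and then exploit the fact that coordinate-wise rescaling is a bijection between the two input domains, so that the two relevant sup norms differ by an exactly computable multiplicative constant.

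First I would differentiate \eqref{eq:fromY:to:Z:one} with respect to $x_j$. The outer factor $1/L_{h+1,i}$ from the output rescaling is constant and factors out; the chain rule applied to the inner coordinate change $t_j \mapsto L_{h,j} t_j$ produces the additional factor $L_{h,j}$. Thus, for any $(t_1,\dots,t_{d_h}) \in \R^{d_h}$,
\[
\frac{\partial Y_{h,i}}{\partial x_j}(t_1,\dots,t_{d_h})
= \frac{L_{h,j}}{L_{h+1,i}} \, \frac{\partial Z_{h,i}}{\partial x_j}\bigl(L_{h,1} t_1,\dots, L_{h,d_h} t_{d_h}\bigr).
\]

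Second, I would observe that the map $\Phi_h : (t_1,\dots,t_{d_h}) \mapsto (L_{h,1} t_1,\dots,L_{h,d_h} t_{d_h})$ is a bijection from $[-1,1]^{d_h}$ onto $\prod_{\ell=1}^{d_h} [-L_{h,\ell}, L_{h,\ell}]$, since each $L_{h,\ell} > 0$. Taking the supremum on both sides of the chain-rule identity and using this bijection yields the exact identity of norms
\[
\norme{\frac{\partial Y_{h,i}}{\partial x_j}}_{\infty,[-1,1]^{d_h}}
= \frac{L_{h,j}}{L_{h+1,i}} \, \norme{\frac{\partial Z_{h,i}}{\partial x_j}}_{\infty,\, \prod_{\ell=1}^{d_h}[-L_{h,\ell},L_{h,\ell}]}.
\]
The claimed equivalence follows at once by dividing both sides of the right-hand inequality by the strictly positive constant $L_{h,j}/L_{h+1,i}$.

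There is no real obstacle; this is purely a bookkeeping computation. The only point requiring a little care is the boundary case $h=H$, where \eqref{eq:fromY:to:Z:two} contains no output rescaling; one handles it uniformly with the case $h<H$ by setting $L_{H+1,1}=1$ in the formula above, which is consistent with the fact that $d_{H+1}=1$.
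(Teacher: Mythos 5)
Your proposal is correct and follows essentially the same route as the paper's proof: differentiate \eqref{eq:fromY:to:Z:one}--\eqref{eq:fromY:to:Z:two} via the chain rule (with the convention $L_{H+1,1}=1$), and read off the equivalence of the two sup-norm bounds from the fact that the coordinate rescaling is a bijection between the two domains. Your extra remark making the domain bijection explicit is a minor elaboration of what the paper leaves implicit after displaying the chain-rule identity.
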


	\begin{proof}[Proof of Lemma \ref{lemma:invariance:three}]
	Let $L_{H+1,1} = 1$ by convention.
	From  \eqref{eq:fromY:to:Z:one} and \eqref{eq:fromY:to:Z:two}, we have, for $h = 2 , \dots,H$, for $i=1,\dots, d_{h+1}$, $j = 1 ,\dots , d_h$, and for $t_1 , \dots , t_{d_h} \in \mathbb{R}$,
	\begin{align*}
		\left(
		\frac{\partial}{ \partial x_j }	
		Y_{h,i}
		\right)
		(t_1 , \dots , t_{d_h})
		& 	=
		\frac{\partial}{ \partial t_j }	 
		\left(
		\frac{1}{L_{h+1,i}} 
		Z_{h,i} 
		(L_{h,1} t_1 , \dots , L_{h,d_h} t_{d_h})
		\right)
		\\
		&	=
		\frac{ L_{h,j} }{L_{h+1,i}}
		\left( 
		\frac{\partial}{\partial x_j }
		Z_{h,i}
		\right)
		\left(
		L_{h,1} t_1 , \dots , L_{h,d_h} t_{d_h}
		\right).
	\end{align*}
	Hence we have
	\[
	\norme{
	\frac{ \partial Z_{h,i} }{\partial x_j}  
	}_{\infty , \prod_{\ell=1,\dots,d_h} [-L_{h,\ell} , L_{h,\ell} ]}
	\leqslant  K_{h,i,j}
	~ ~
	\Longleftrightarrow
	~ ~	\norme{
	\frac{ \partial Y_{h,i} }{\partial x_j}  
	}_{\infty , [-1,1]^{d_h}}
	\leqslant \frac{ L_{h,j} }{L_{h+1,i}} K_{h,i,j}. 
	\]
	which concludes the proof.
\end{proof}

\section{Preliminary notation and intermediate results}\label{app:inter}

The proofs will exploit the results established in \cite{van2008rates} for a single Gaussian process. Thus we will consider the setting of a single  Gaussian process defined on a compact space and valued in $\R$. The following section is dedicated to the construction of this process.

\subsection{Introduction of a single global Gaussian prior}\label{app:global:prior}

Let $d_{\max} = \max(d_1 , \dots , d_H)$ and let $\mathcal X$ be the compact subspace $[-1,1]^{d_{\max}}\times \mathcal I$ with the distance $d$ defined by 
\[
d((t,h,i),(t',h',i'))=\norme{(t,\sigma(h,i))-(t',\sigma(h',i'))}, \qquad \text{for $(t,h,i)$ and $(t',h',i')$ in $\mathcal X$}
\]
where the norm is the Euclidean norm in dimension $d_{\max}+1$ and $\sigma$ is any fixed bijection from $\mathcal{I}$ to $\{1,\dots,| \mathcal{I} |\}$ with $|  \mathcal{I} |$ the cardinality of $ \mathcal{I} $. 
We introduce the centered Gaussian process $W$, defined by
\begin{align}\label{def:bivariate:proc}
W(t , h,i)
= Z_{h,i}(T_h(t)), \qquad \text{for $(t,h,i) \in \mathcal X$,}
\end{align} 
where $T_h(t)$ is the vector of length $d_h$ obtained from the $d_h$ first coordinates of $t$. Then $W$ has continuous trajectories from $\mathcal X$ to $\R$.

\medskip

Let $\mathbb B$ be the Banach space of the continuous functions from $\mathcal{X}$ to $\R$ endowed with the uniform norm 
given by 
\begin{align*}
\norme{w}_{\infty,\mathcal X}\defeq  \vee_{(h,i)\in \mathcal I}  \norme{w(\cdot{},h,i)}_{\infty,[-1,1]^{d_{\max}}} 
\end{align*}
for any $w\in \mathbb B$. Here $t\vee s$ stands for $\max(t,s)$ between $t , s \in \R$.  From now on, we simply denote $\norme{\cdot{}}_{\infty, \mathcal{X} }$ by $\norme{\cdot{}}_{\infty}$. 
For $w \in \mathbb{B}$ and $(h ,i) \in \mathcal{I}$, we define $P_{h,i}(w)$ as the real-valued function on $[-1,1]^{d_h}$, defined by
\[
P_{h,i}(w)(t)
=
w( (t,0) , h,i), \qquad \text{for $t \in [-1,1]^{d_h}$,}
\]
where the vector $(t,0)$ has dimension $d_{\max}$. Consequently,  for $(h ,i ) \in \mathcal{I}$ and for $t \in [ -1,1 ]^{d_h}  $,
\[
P_{h,i}( W ) (t)
= 
Z_{h,i}(t). 
\]

\medskip

Now we define the subset $\mathbb B_c$ of $\mathbb B$ composed by the functions $w \in \mathbb{B}$ such that the three following conditions (corresponding to \eqref{eq:first:constraint} and \eqref{eq:second:constraint}) hold.
\begin{itemize}
	\item For $h = 1 , \dots , H-1$, $i = 1 ,\dots , d_{h+1}$, $\norme{ P_{h,i}(w) }_{\infty, [-1,1]^{d_h}} \leqslant 1$.
	\item For $h=2 , \dots , H$, $i = 1 , \dots , d_{h+1}$, $P_{h,i}(w)$ is continuously differentiable on $[-1,1]^{d_h}$.  
	\item For $h=2 , \dots , H$, $i = 1 , \dots , d_{h+1}$, $j = 1 , \dots , d_h$, $\norme{ \partial  P_{h,i}(w) / \partial x_j }_{\infty, [-1,1]^{d_h}} \leqslant K_{h,i,j}$.
\end{itemize}

Then we consider a process $W_c$ valued in $\mathbb B$ with distribution defined by
\begin{align}\label{def:Wc}
\P(W_c \in B)=\P(W\in B\vert \ W\in \mathbb B_c),
\end{align}
for any Borel set $B\subset \mathbb B$. Hence $W_c$ corresponds to the Gaussian process $W$ conditioned by the constraints \eqref{eq:first:constraint} and/or \eqref{eq:second:constraint}.

For any function $\psi \in \mathbb B_c$, let $C_{\psi}$ be the function from $[-1,1]^d$ to $\R$ defined by the following.  For $h = 1, \dots, H$, we let the function $C_{\psi,h} = ( P_{h,1}( \psi ) , \dots , P_{h,d_{h+1}}(\psi) )$ be defined on $[-1,1]^{d_h}$ and valued in $[-1,1]^{d_{h+1}}$. Then we let, for $t \in [-1,1]^d$,
\begin{align}\label{def:Cpsi}
C_{\psi}(t)= 
C_{\psi,H} \circ \dots \circ C_{\psi,1}(t).
\end{align}
Remark that, for $t\in [-1,1]^d$, when $W\in \mathbb B_c$, 
\begin{align}\label{def:C_W}
C_{W} (t) = 
Z_H \circ \dots \circ Z_1 (t).
\end{align}

Hence, considering the conditioned version $W_c$ of $W$ (as defined in \eqref{def:Wc}),  $C_{W_c}$ has the distribution of the deep Gaussian process $Z_H \circ \dots \circ Z_1$ where the univariate Gaussian processes $Z_{h,i}$ are conditioned to  \eqref{eq:first:constraint} and/or \eqref{eq:second:constraint}. 

\medskip

Finally, we let $k$ be the covariance function of $W$ on $\mathcal{X} \times \mathcal{X}$ and $\mathbb H$ be the RKHS of $W$ (that is defined as in  \cite[Section 2.1]{van2008reproducing}) with RKHS-norm $\norme{\cdot{}}_{\mathbb H}$.

\subsection{The global concentration function is upper bounded} 

For $w_0$ in the closure of $\mathbb{H}$ in $\mathbb{B}$ (with respect to $\|\cdot{}\|_{\infty}$), we define, for $\varepsilon >0$,
\begin{align} 
\phi_{c,w_0}(\varepsilon)
= &
\underset{
\substack{ h \in \mathbb H \\
\norme{ h - w_0 }_{\infty} < \varepsilon 
} 
}{\inf}
\norme{h}_{\mathbb H}^2
-
\log
\mathbb{P}
\left(
\norme{ W_c }_{\infty} 
< \varepsilon
\right) -\log \P(\norme{W -w_0}_{\infty}<2\varepsilon,\ W\in \mathbb B_c).\label{def:w0_c}
\end{align}
The function $\phi_{c,w_0}$ is interpreted as the concentration function of $W$ around $w_0$ and  is an extension of the function $\phi_{x_0}$ already defined in \eqref{def:w0} and in \cite[Equation (1.2)]{van2008rates}, that takes the bound constraints in $W_c$ into account. The next lemma shows that the presence of the constraints increases the concentration function.

\begin{lemma}\label{lem:ineg_phi_phi_c}
Let $\phi_{w_0}(\varepsilon)$ be as in \eqref{def:w0} with $x_0=w_0$, $X=W$ and $E=\mathcal X$. One has $\phi_{w_0}(\varepsilon)\leqslant \phi_{c,w_0}(\varepsilon)$ for any $\varepsilon >0$. 
\end{lemma}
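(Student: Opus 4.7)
The plan is to compare the two concentration functions term by term. Writing $I(\varepsilon) \defeq \inf \{ \norme{h}_{\mathbb H}^2 : h \in \mathbb H,\ \norme{h-w_0}_\infty < \varepsilon\}$, the infimum appears identically in the definitions \eqref{def:w0} (with $x_0 = w_0$, $X = W$) and \eqref{def:w0_c}. Subtracting, the claim $\phi_{w_0}(\varepsilon) \leqslant \phi_{c,w_0}(\varepsilon)$ reduces to
\[
\log \mathbb{P}(\norme{W}_\infty < \varepsilon) \;\geqslant\; \log \mathbb{P}(\norme{W_c}_\infty < \varepsilon) + \log \mathbb{P}(\norme{W - w_0}_\infty < 2\varepsilon,\ W \in \mathbb B_c).
\]
If the right-most probability is zero, its log is $-\infty$ and the inequality is vacuous, so we may assume both probabilities on the right are strictly positive.

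Next, I would use the definition \eqref{def:Wc} of $W_c$ as $W$ conditioned on $\{W \in \mathbb B_c\}$, which (by Lemma~\ref{lemma:proba:small:ball:non-zero}, invoked earlier in the paper to guarantee $\mathbb{P}(W \in \mathbb B_c) > 0$) gives
\[
\mathbb{P}(\norme{W_c}_\infty < \varepsilon) = \frac{\mathbb{P}(\norme{W}_\infty < \varepsilon,\ W \in \mathbb B_c)}{\mathbb{P}(W \in \mathbb B_c)}.
\]
Substituting this and exponentiating, the required inequality becomes
\[
\mathbb{P}(\norme{W}_\infty < \varepsilon,\ W \in \mathbb B_c) \cdot \mathbb{P}(\norme{W - w_0}_\infty < 2\varepsilon,\ W \in \mathbb B_c) \;\leqslant\; \mathbb{P}(\norme{W}_\infty < \varepsilon) \cdot \mathbb{P}(W \in \mathbb B_c).
\]

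Finally, I would conclude by the trivial set inclusions: the event in the first factor on the left is contained in $\{\norme{W}_\infty < \varepsilon\}$, and the event in the second factor is contained in $\{W \in \mathbb B_c\}$, so each factor on the left is dominated by the corresponding factor on the right. Multiplying gives the inequality and hence the lemma. There is no genuine obstacle here; the only care needed is the degenerate case where the probability $\mathbb{P}(\norme{W-w_0}_\infty < 2\varepsilon,\ W\in \mathbb B_c)$ vanishes, which is handled by the convention $-\log 0 = +\infty$ and makes the bound trivial.
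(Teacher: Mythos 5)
Your proof is correct and follows essentially the same route as the paper: cancel the common RKHS-infimum, use the conditioning identity $\mathbb{P}(\norme{W_c}_\infty < \varepsilon) = \mathbb{P}(\norme{W}_\infty < \varepsilon,\, W\in\mathbb{B}_c)/\mathbb{P}(W\in\mathbb{B}_c)$, and conclude from the two obvious set inclusions. The only cosmetic difference is that the paper compares logs directly while you exponentiate and compare products, and you explicitly note the degenerate case where a log is $-\infty$ (which the paper leaves implicit).
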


\begin{proof}[Proof of Lemma \ref{lem:ineg_phi_phi_c}]
For any $\varepsilon>0$, let us consider the  difference between the two concentration functions
\begin{align*}
\phi_{c,w_0}(\varepsilon)-\phi_{w_0}(\varepsilon)&=-
\log
\mathbb{P}
\left(
\norme{ W_c }_{\infty} 
< \varepsilon
\right)-\log \P(\norme{W -w_0}_{\infty}<2\varepsilon,\ W\in \mathbb B_c)\\
&\quad  +
\log
\mathbb{P}
\left(
\norme{ W }_{\infty} 
< \varepsilon
\right).
\end{align*}
Since $\mathbb{P}
\left(
\norme{ W_c }_{\infty} 
< \varepsilon
\right)=\mathbb{P}
\left(
\norme{ W }_{\infty} 
< \varepsilon,\ W\in \mathbb B_c 
\right)/
 \P(W\in \mathbb B_c)$, 
\begin{align*}
&\P\left(
\norme{ W }_{\infty} 
< \varepsilon
\right)
\geqslant 
\mathbb{P}
\left(
\norme{ W }_{\infty} 
< \varepsilon,\ W\in \mathbb B_c 
\right)  \text{ and }
\P(W\in \mathbb B_c) \geqslant  \P(\norme{W -w_0}_{\infty}<2\varepsilon,\ W\in \mathbb B_c),
\end{align*}
the latter difference is non-negative.
\end{proof}

The next theorem shows that the global $\phi_{c,w_0}$ defined in \eqref{def:w0_c} is upper bounded by $\Phi_{c,w_0}$ defined in \eqref{def:Phi_w0}.

\begin{theorem} \label{thm:analyzing:phi}
Let $w_0$ in $\mathbb{B}$.
For $(h,i) \in \mathcal{I}$, assume that $w_0(  \cdot ,h , i ) = P_{h,i}(w_0) (T_h(\cdot))$.  
For $h = 2,\dots,H$ and for $i=1,\dots,d_{h+1}$,	assume that $P_{h,i}(w_0)$ is in the closure of $\mathbb H_{h,i}$ in $(\mathcal{C}^1( [-1,1]^{d_h} , \mathbb{R} ) ,\norme{\cdot}_{\infty,1} )$. Recall that $\norme{\cdot}_{\infty,1}$ is defined in \eqref{def:norne:infini:1}. 
For $i =1 , \dots , d_2$, assume that $P_{1,i}(w_0)$ is in the closure of $\mathbb H_{1,i}$ in $( \mathcal{C}^0( [-1,1]^{d_1} , \mathbb{R} ) , \norme{\cdot}_{\infty} )$.
Then $w_0$ is in the closure of $\mathbb H$ in $(\mathbb{B} , \norme{ \cdot }_{\infty})$. 

Now consider $ \varepsilon \in (0,1]$ and assume that, for $h=1,\dots,H-1$, for $i=1 , \dots , d_{h+1}$,
\begin{equation} \label{eq:analyzing:phi:cond:un}
	\norme{  P_{h,i}(w_0)  }_{\infty}
	+ 2 \varepsilon
	\leqslant 1
\end{equation}
and, for  $h = 2,\dots,H$, for $i=1,\dots,d_{h+1}$, and for $j=1,\dots,d_h$,
\begin{equation} \label{eq:analyzing:phi:cond:deux}
\norme{  
	\frac{
\partial			P_{h,i}(w_0) 
	}{
\partial x_j
}  }_{\infty}
\leqslant \frac{K_{\min}}{2}
\end{equation}
where $K_{\min}$ is defined in \eqref{eq:Kmin}.
Then we have
\begin{align*}
\phi_{c,w_0}(\varepsilon)  
\leqslant & \Phi_{c,w_0}(\varepsilon)
\end{align*}
where $\Phi_{c,w_0}$ is as in \eqref{def:Phi_w0} with $z_{0,h,i}=P_{h,i}(w_0)$ for $(h,i)\in \mathcal I$.
\end{theorem}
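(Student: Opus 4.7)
The plan is to exploit the ``stacking'' structure of $W$ over components $(h,i)\in \mathcal I$: each quantity appearing in $\phi_{c,w_0}(\varepsilon)$ will be decomposed into contributions indexed by $(h,i)$ and then matched to the corresponding pieces of $\Phi_{c,w_0}(\varepsilon)$. A preliminary step is the RKHS decomposition: since the $Z_{h,i}$ are independent and $W(\cdot,h,i)=Z_{h,i}(T_h(\cdot))$, the covariance $k$ of $W$ is ``block-diagonal'' in $(h,i)$, and a direct identification starting from kernel sections shows that $\mathbb H$ consists exactly of functions $h(t,h',i')=g_{h',i'}(T_{h'}(t))$ with $g_{h',i'}\in \mathbb H_{h',i'}$, with RKHS norm $\norme{h}_{\mathbb H}^2=\sum_{(h',i')\in\mathcal I}\norme{g_{h',i'}}^2_{\mathbb H_{h',i'}}$. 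Since $\norme{w-w_0}_\infty=\max_{(h,i)}\norme{P_{h,i}(w)-P_{h,i}(w_0)}_\infty$, approximating each $P_{h,i}(w_0)$ in its own RKHS-closure (granted by assumption) and gluing the approximants yields the first assertion of the theorem, that $w_0$ belongs to the closure of $\mathbb H$ in $(\mathbb B, \norme{\cdot}_\infty)$.

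Next I would split $\phi_{c,w_0}(\varepsilon)=I+T_1+T_2$, with $I$ the RKHS infimum, $T_1=-\log\mathbb P(\norme{W_c}_\infty<\varepsilon)$, and $T_2=-\log\mathbb P(\norme{W-w_0}_\infty<2\varepsilon,W\in\mathbb B_c)$; the coefficients $3/2$ and $2$ of $\Phi_{c,w_0}$ will arise because the RKHS infimum gets used twice (once in $I$, once inside a Cameron-Martin shift for $T_2$) and each small ball gets used twice (once in $T_1$, once in $T_2$). For $I$, given any collection $(g_{h,i})$ feasible for the infima defining $\Phi_{c,w_0}$, the glued function $h(t,h',i')=g_{h',i'}(T_{h'}(t))$ satisfies $\norme{h-w_0}_\infty<\varepsilon$ (since $\varepsilon/2<\varepsilon$ for $h\geqslant 2$), so $I\leqslant \sum_{(h,i)}\norme{g_{h,i}}^2_{\mathbb H_{h,i}}$ and the sum of the $\Phi_{c,w_0}$-infima upper bounds $I$. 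For $T_1$, use $\mathbb P(W\in\mathbb B_c)\leqslant 1$ to write $T_1\leqslant -\log\mathbb P(\norme{W}_\infty<\varepsilon,W\in\mathbb B_c)$, factorize by independence of the $Z_{h,i}$, and apply the Gaussian correlation inequality (Royen) within each component to separate the value event $\{\norme{Z_{h,i}}_\infty<\varepsilon\}$ from each derivative event $\{\norme{\partial Z_{h,i}/\partial x_j}_\infty\leqslant K_{h,i,j}\}$; monotonicity of small-ball probabilities in the radii ($K_{\min}\leqslant K_{h,i,j}$, $\varepsilon/2\leqslant\varepsilon$, $K_{\min}/4\leqslant K_{\min}$) then upgrades to the smaller radii $\varepsilon/2$ and $K_{\min}/4$ used in $\Phi_{c,w_0}$. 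For $T_2$, I first use \eqref{eq:analyzing:phi:cond:un} and \eqref{eq:analyzing:phi:cond:deux} together with $\norme{g-P_{h,i}(w_0)}_\infty<\varepsilon/2$ and $\norme{\partial g/\partial x_j-\partial P_{h,i}(w_0)/\partial x_j}_\infty<K_{\min}/4$ to verify that the tighter event $\{\norme{Z_{h,i}-g}_\infty<\varepsilon/2$ and $\norme{\partial Z_{h,i}/\partial x_j-\partial g/\partial x_j}_\infty<K_{\min}/4$ for all $j\}$ implies simultaneously $\norme{Z_{h,i}-P_{h,i}(w_0)}_\infty<2\varepsilon$ and all the constraints defining $\mathbb B_c$; then I apply Cameron-Martin to the joint Gaussian element $(Z_{h,i},\partial_j Z_{h,i})_j$ with shift $(g,\partial_j g)_j$, whose RKHS norm equals $\norme{g}_{\mathbb H_{h,i}}$, to produce a factor $e^{-\norme{g}^2_{\mathbb H_{h,i}}/2}$, and finally Royen's inequality to separate the remaining value and derivative small balls. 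Taking $-\log$ and then the infimum over $g$ yields the ``second copies'' $(1/2)\sum\inf\norme{g}^2$ plus one further copy of each small-ball term, which combined with the contributions of $I$ and $T_1$ sum to exactly the $3/2$ and $2$ coefficients of $\Phi_{c,w_0}$ (the cases $h=1$, without derivative terms, and $h=H$, without value constraint, are handled by the same template with obvious simplifications, and match the two forms of sum appearing in \eqref{def:Phi_w0}).

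The main obstacle I anticipate is the Cameron-Martin and GCI step applied to the joint Gaussian $(Z_{h,i},\partial_j Z_{h,i})_j$: one must identify this as a centered Gaussian random element in the product Banach space $\mathbb B_h\times(\mathbb B_h)^{d_h}$ and verify that its Cameron-Martin space consists exactly of pairs $(g,\partial_j g)_j$ with $g\in \mathbb H_{h,i}$, with norm $\norme{g}_{\mathbb H_{h,i}}$. This rests on differentiation being a continuous linear operation from $\mathcal C^1([-1,1]^{d_h},\mathbb R)$ to $\mathcal C^0([-1,1]^{d_h},\mathbb R)$ together with the general principle that Cameron-Martin shifts commute with continuous linear operators, and on checking that the symmetric convex small-ball sets involved are admissible for Royen's inequality (e.g.\ via finite-dimensional approximation). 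A secondary technical point is the careful tracking of the halved radii $\varepsilon/2$ and $K_{\min}/4$: the ``halving'' of the uniform radius in the implication chain $\{\norme{Z_{h,i}-g}_\infty<\varepsilon/2\}\Rightarrow \{\norme{Z_{h,i}-P_{h,i}(w_0)}_\infty<2\varepsilon\}\cap\{\norme{Z_{h,i}}_\infty\leqslant 1\}$ is precisely what forces the radius $\varepsilon/2$ inside the $\Phi_{c,w_0}$ infima and small balls, and similarly for the derivative radius $K_{\min}/4$.
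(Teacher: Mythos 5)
Your proposal is correct and follows essentially the same route as the paper: an RKHS-stacking lemma (the paper's Lemma \ref{lem:RKHS:X}) to decompose the global infimum across $(h,i)$, the same three-term splitting $I+T_1+T_2$, triangle-inequality event implications exploiting \eqref{eq:analyzing:phi:cond:un}--\eqref{eq:analyzing:phi:cond:deux}, a Cameron--Martin de-centering of the joint process $(Z_{h,i},\partial_j Z_{h,i})_j$ (encapsulated by the paper in Lemmas \ref{lem:decentering:small:balls} and \ref{lem:in:RKHS} via an $\varepsilon$-weighted auxiliary sup-norm $\norme{\cdot}_{\infty,\varepsilon}$), and Royen's Gaussian correlation inequality to factor the value and derivative small balls, yielding the $3/2$ and $2$ coefficients by exactly the double-counting you describe. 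The only cosmetic difference is that you present the de-centering step as a direct Cameron--Martin shift on a product Banach space rather than via the paper's single auxiliary norm, and the paper additionally invokes its Lemma \ref{lem:proba:maximum:equals} to freely swap strict and non-strict inequalities in the small-ball probabilities.
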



\subsection{Contraction rates with the global concentration function}

The next theorem is an adaptation of  \cite[Theorem 2.1]{van2008rates}. For a metric space $(E,d)$, any subset $A \subset E$, and any $\varepsilon >0$, we let $N( \varepsilon,A,d )$ be the minimum number of balls of radius $\varepsilon$ needed to cover $A$.

\begin{theorem} \label{th:Th2.1}
Consider any sequence $(\varepsilon_{c,n})_{n\in \N}$, satisfying $n \varepsilon_{c,n}^2 \to \infty$ and $\phi_{c,w_0} (\varepsilon_{c,n}) \leqslant n \varepsilon_{c,n}^2$ for $\phi_{c,w_0}$ defined in \eqref{def:w0_c}, and any $C>1$ with $e^{-Cn\varepsilon_{c,n}^2}<1/2$. Then there exists an integer $n_C\geqslant 0$ and a sequence of measurable sets $(B_{c,n})_{n\in \N}$ with $B_{c,n}\subset \mathbb B_c$ such that, for $n\geqslant n_C$,  
\begin{align}
\log N( 6 \sqrt{C} \varepsilon_{c,n} ,B_{c,n},\norme{\cdot{}}_{\infty}) 
&\leqslant 
n  (12 \sqrt{C} \varepsilon_{c,n})^2,\label{eq:Th2.1:1}\\
\P(W_c\notin B_{c,n})&\leqslant e^{-Cn\varepsilon_{c,n}^2},\label{eq:Th2.1:2}\\
\P(\norme{W_c-w_0}_{\infty}<2\varepsilon_{c,n})&\geqslant e^{-n\varepsilon_{c,n}^2}.\label{eq:Th2.1:3}
\end{align}
\end{theorem}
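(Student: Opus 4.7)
The strategy is to adapt the construction of van der Vaart and van Zanten \cite[Theorem 2.1]{van2008rates}, originally written for an unconditioned centered Gaussian process, to our conditioned setting, where $W_c$ is precisely $W$ conditioned on the event $\{W \in \mathbb{B}_c\}$. The plan is to take $B_{c,n} = B_n \cap \mathbb{B}_c$, with $B_n$ a classical Borell-type neighborhood for $W$, and to pay the cost of conditioning by means of a lower bound on $\mathbb{P}(W \in \mathbb{B}_c)$ extracted from the concentration function $\phi_{c,w_0}$ itself.

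First I would establish \eqref{eq:Th2.1:3} directly from the definition \eqref{def:w0_c}. Since $\inf \norme{h}_{\mathbb{H}}^2$ and $-\log \mathbb{P}(\norme{W_c}_\infty < \varepsilon)$ are both nonnegative,
\[
\phi_{c,w_0}(\varepsilon) \;\geqslant\; -\log \mathbb{P}\bigl(\norme{W-w_0}_\infty < 2\varepsilon,\ W \in \mathbb{B}_c\bigr),
\]
and since $\mathbb{P}(\norme{W-w_0}_\infty < 2\varepsilon,\ W \in \mathbb{B}_c) = \mathbb{P}(\norme{W_c-w_0}_\infty < 2\varepsilon)\,\mathbb{P}(W \in \mathbb{B}_c)$ with $\mathbb{P}(W \in \mathbb{B}_c) \leqslant 1$, I obtain $\mathbb{P}(\norme{W_c-w_0}_\infty < 2\varepsilon) \geqslant e^{-\phi_{c,w_0}(\varepsilon)}$, from which \eqref{eq:Th2.1:3} follows via $\phi_{c,w_0}(\varepsilon_{c,n}) \leqslant n \varepsilon_{c,n}^2$. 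The same chain also yields the crucial estimate $\mathbb{P}(W \in \mathbb{B}_c) \geqslant e^{-n\varepsilon_{c,n}^2}$, which is the budget I can spend on conditioning afterwards.

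Next I would define $B_n = \varepsilon_{c,n} \mathbb{B}_1 + M_n \mathbb{H}_1$, with $\mathbb{B}_1$ the unit ball of $(\mathbb{B},\norme{\cdot}_\infty)$ and $\mathbb{H}_1$ the unit ball of the global RKHS $\mathbb{H}$, and choose $M_n$ of order $\sqrt{n\varepsilon_{c,n}^2}$ so that Borell's inequality gives $\mathbb{P}(W \notin B_n) \leqslant e^{-(C+1)n\varepsilon_{c,n}^2}$. This requires a lower bound on the unconditioned small-ball probability, which I obtain from
\[
\mathbb{P}(\norme{W}_\infty < \varepsilon_{c,n}) \;\geqslant\; \mathbb{P}(\norme{W_c}_\infty < \varepsilon_{c,n})\,\mathbb{P}(W \in \mathbb{B}_c) \;\geqslant\; e^{-2n\varepsilon_{c,n}^2},
\]
the first factor being bounded below by $e^{-\phi_{c,w_0}(\varepsilon_{c,n})}$ (again reading off one summand of $\phi_{c,w_0}$) and the second by the preceding estimate. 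Setting $B_{c,n} = B_n \cap \mathbb{B}_c$, which is a measurable subset of $\mathbb{B}_c$, then yields \eqref{eq:Th2.1:2} through
\[
\mathbb{P}(W_c \notin B_{c,n}) \;\leqslant\; \frac{\mathbb{P}(W \notin B_n)}{\mathbb{P}(W \in \mathbb{B}_c)} \;\leqslant\; e^{-(C+1)n\varepsilon_{c,n}^2}\,e^{n\varepsilon_{c,n}^2} \;=\; e^{-Cn\varepsilon_{c,n}^2}.
\]
For \eqref{eq:Th2.1:1} I would exploit $B_{c,n} \subset B_n$ together with the standard Kuelbs-Li-Linde type entropy estimate from the proof of \cite[Theorem 2.1]{van2008rates}; with $M_n = \Theta(\sqrt{n\varepsilon_{c,n}^2})$ this gives $\log N(6\sqrt{C}\varepsilon_{c,n}, B_n, \norme{\cdot}_\infty) = O(n \varepsilon_{c,n}^2)$, which is absorbed into $n(12\sqrt{C}\varepsilon_{c,n})^2$ for $n$ large enough.

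The main obstacle is a matter of bookkeeping rather than of new probabilistic ideas: the factor $e^{n\varepsilon_{c,n}^2}$ lost by conditioning forces me to aim at the exponent $C+1$ (rather than $C$) in the Borell step, which in turn inflates $M_n$ by a constant factor; the resulting entropy must still fit under $n(12\sqrt{C}\varepsilon_{c,n})^2$, dictating the threshold $n_C$. Beyond Borell's inequality, the Kuelbs-Li-Linde entropy bound, and the definition of $\phi_{c,w_0}$, no genuinely new tool appears necessary.
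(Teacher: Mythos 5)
Your proposal is correct and reaches the same three conclusions, but by a different bookkeeping mechanism from the paper's. You both take $B_{c,n} = B_n \cap \mathbb{B}_c$ and prove \eqref{eq:Th2.1:3} by the same direct reading of the third summand of $\phi_{c,w_0}$. Where you differ is in how the conditioning cost $\mathbb{P}(W\in\mathbb{B}_c)^{-1}$ is absorbed. The paper keeps the radius budget loose and the constant tight: it sets $\varepsilon_n^2 = \varepsilon_{c,n}^2 - \log\mathbb{P}(W\in\mathbb{B}_c)/(Cn) \geqslant \varepsilon_{c,n}^2$, shows via Lemma~\ref{lem:ineg_phi_phi_c} and monotonicity of $\phi_{c,w_0}$ that $\phi_{w_0}(\varepsilon_n) \leqslant n\varepsilon_n^2$, and then invokes \cite[Theorem~2.1]{van2008rates} as a black box at radius $\varepsilon_n$ and constant $C$; the choice of $\varepsilon_n$ makes $e^{-Cn\varepsilon_n^2}/\mathbb{P}(W\in\mathbb{B}_c) = e^{-Cn\varepsilon_{c,n}^2}$ exactly, so \eqref{eq:Th2.1:2} is an identity and the threshold $n_C$ arises only to force $\varepsilon_n \leqslant 2\varepsilon_{c,n}$ for the entropy bound. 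You instead keep the radius $\varepsilon_{c,n}$ and inflate the constant to $C+1$ in Borell, paying the conditioning cost with the crude but self-contained budget $\mathbb{P}(W\in\mathbb{B}_c) \geqslant e^{-n\varepsilon_{c,n}^2}$ extracted, legitimately, from the same third summand of $\phi_{c,w_0}$. Both work because $C>1$ is generous: your scheme needs $C+1 \geqslant 2$ for the Borell step (which is $C\geqslant 1$) and needs $6(C+1)\leqslant 144C$ for the entropy step (trivial for $C>1$). Practically, the paper's route is shorter because it defers Borell and Kuelbs--Li entirely to \cite{van2008rates}, while yours re-enters the machinery; but your exposition makes the source of the budget and the exponent arithmetic more transparent. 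Neither route is a gap -- this is a genuine alternative derivation that would also yield the statement.
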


Theorem \ref{th:Th2.1} enables to obtain posterior contraction rates at speed $\varepsilon_{c,n}$ satisfying $\phi_{c,w_0} (\varepsilon_{c,n}) \leqslant n \varepsilon_{c,n}^2$ similarly as done in \cite{van2008rates}. This directly allows to prove Theorems \ref{th:Th3.1} and \ref{th:Th3.2}, that is contraction rates at speed $\varepsilon_{c,n}$ satisfying 
$\Phi_{c,z_0} (\varepsilon_{c,n}) \leqslant n \varepsilon_{c,n}^2$
from the bound $\phi_{c,w_0}\leqslant \Phi_{c,z_0}$ of Theorem \ref{thm:analyzing:phi}.

\section{Proofs}\label{app:proofs}

In the rest of the appendix, we write $C_{\sup}$ for a finite constant which value is allowed to change between occurrences.

\subsection{Proof of Theorem \ref{thm:analyzing:phi}} \label{ssec:proof:analyzing:phi}

Before proving Theorem \ref{thm:analyzing:phi}, let us establish several useful lemmas.
We do not exclude that results similar to Lemma \ref{lem:RKHS:X} could be known by the experts, but we have not found any reference. We prove Lemma \ref{lem:RKHS:X} after its statement for self-sufficiency.

\begin{lemma} \label{lem:RKHS:X}
The RKHS $\mathbb H$ of the covariance function $k$ is equal to the set of functions $g : \mathcal{X} \to \mathbb{R}$ such that, for $(h,i) \in \mathcal{I}$,
\begin{equation} \label{eq:structure:RKHS:X}
 g( \cdot ,h ,i ) =  P_{h,i}(g)(T_h(\cdot)) 
 ~  ~ ~
 \text{and}
 ~ ~ ~
  P_{h,i} (g) \in \mathbb H_{h,i}.
\end{equation}
Furthermore, for $g \in \mathbb H$, one has
\begin{equation} \label{eq:norm:equality:bivariate}
\norme{ g }^2_{\mathbb H}
=
\sum_{(h,i) \in \mathcal{I}}
\norme{ P_{h,i} (g) }^2_{\mathbb H_{h,i}}.
\end{equation}
\end{lemma}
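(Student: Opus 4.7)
The plan is to construct an explicit candidate space $\tilde{\mathbb{H}}$ consisting of all functions $g \colon \mathcal{X} \to \mathbb{R}$ satisfying \eqref{eq:structure:RKHS:X}, equip it with the inner product
\[
\langle f, g \rangle_{\tilde{\mathbb{H}}} = \sum_{(h,i) \in \mathcal{I}} \bigl\langle P_{h,i}(f), P_{h,i}(g) \bigr\rangle_{\mathbb{H}_{h,i}},
\]
and then verify by direct computation that $\tilde{\mathbb{H}}$ is a Hilbert space of functions on $\mathcal{X}$ in which $k$ acts as a reproducing kernel. The uniqueness of the RKHS of a positive definite kernel (as recalled in \cite[Section 2.1]{van2008reproducing}) will then force $\mathbb{H} = \tilde{\mathbb{H}}$ together with the norm identity \eqref{eq:norm:equality:bivariate}.

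The engine driving everything is the block-diagonal structure of $k$: independence of the component processes $Z_{h,i}$ combined with \eqref{def:bivariate:proc} yields
\[
k\bigl((t,h,i),(t',h',i')\bigr) = \ind_{(h,i)=(h',i')}\, k_{h,i}\bigl(T_h(t), T_h(t')\bigr),
\]
so that all cross terms between distinct indices $(h,i), (h',i')$ vanish. Completeness of $\tilde{\mathbb{H}}$ then follows easily because $|\mathcal{I}|$ is finite and each $\mathbb{H}_{h,i}$ is complete: a Cauchy sequence in $\tilde{\mathbb{H}}$ descends via the linear maps $P_{h,i}$ to Cauchy sequences in each $\mathbb{H}_{h,i}$, whose limits reassemble into an element of $\tilde{\mathbb{H}}$ through the compatibility relation $g(\cdot,h,i) = P_{h,i}(g)(T_h(\cdot))$.

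To exhibit $k$ as the reproducing kernel, I would use the block-diagonal formula together with the identity $T_h((u,0)) = u$ for $u \in [-1,1]^{d_h}$ to show that, for any $(t_0,h_0,i_0) \in \mathcal{X}$,
\[
P_{h,i}\bigl(k(\cdot,(t_0,h_0,i_0))\bigr) = \ind_{(h,i)=(h_0,i_0)}\, k_{h_0,i_0}(\cdot, T_{h_0}(t_0)) \in \mathbb{H}_{h,i},
\]
so that $k(\cdot,(t_0,h_0,i_0)) \in \tilde{\mathbb{H}}$. The reproducing property then collapses to its single nonzero summand:
\[
\langle g, k(\cdot,(t_0,h_0,i_0)) \rangle_{\tilde{\mathbb{H}}} = \bigl\langle P_{h_0,i_0}(g), k_{h_0,i_0}(\cdot, T_{h_0}(t_0)) \bigr\rangle_{\mathbb{H}_{h_0,i_0}} = P_{h_0,i_0}(g)(T_{h_0}(t_0)) = g(t_0,h_0,i_0),
\]
by the reproducing property of $k_{h_0,i_0}$ in $\mathbb{H}_{h_0,i_0}$ combined with the compatibility relation defining $\tilde{\mathbb{H}}$.

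The main point requiring care will be the bookkeeping around the zero-padding in the definition of $P_{h,i}$ and its interaction with the projection $T_h$, so that the compatibility relation is checked consistently for the sections $k(\cdot,(t_0,h_0,i_0))$ themselves. I do not anticipate any conceptual obstacle: the finiteness of $\mathcal{I}$ removes the usual subtleties of infinite Hilbert direct sums, and once the block-diagonal formula is in hand the verification reduces, summand by summand, to the already established RKHS theory for each $Z_{h,i}$.
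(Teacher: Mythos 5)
Your proof is correct, but it takes a genuinely different route from the paper's. The paper works \emph{bottom-up}, via the Moore--Aronszajn construction: it introduces the pre-Hilbert spaces $\mathbb{H}_0$ and $\mathbb{H}_{0,h,i}$ of finite linear combinations of kernel sections, checks that $P_{h,i}$ maps $\mathbb{H}_0$ into $\mathbb{H}_{0,h,i}$ and that the squared norm there decomposes additively, and then pushes the two inclusions through Cauchy sequences and pointwise limits to close up the full RKHSs. You instead work \emph{top-down}: you declare the candidate space $\tilde{\mathbb{H}}$ of all $g$ satisfying \eqref{eq:structure:RKHS:X}, endow it with the direct-sum inner product, check that it is a Hilbert space of functions (using finiteness of $\mathcal{I}$ and completeness of each $\mathbb{H}_{h,i}$), verify via the block-diagonal formula $k((t,h,i),(t',h',i')) = \ind_{(h,i)=(h',i')}\,k_{h,i}(T_h(t),T_h(t'))$ that each kernel section lies in $\tilde{\mathbb{H}}$ and that the reproducing property holds, and then invoke uniqueness of the RKHS to conclude $\mathbb{H} = \tilde{\mathbb{H}}$ with the stated norm. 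Both approaches rely on the same structural facts (independence giving a block-diagonal kernel, and $T_h((u,0))=u$ matching $P_{h,i}$ to $T_h$), but yours avoids all Cauchy-sequence bookkeeping at the price of having to verify completeness and the reproducing property directly; the paper's avoids needing the uniqueness theorem explicitly but is longer. Your completeness argument could be stated slightly more carefully (the reassembled limit $g$ must be shown to be the $\tilde{\mathbb{H}}$-limit, not merely the componentwise limit, but this follows immediately since $\norme{g_n-g}^2_{\tilde{\mathbb{H}}}=\sum_{(h,i)}\norme{P_{h,i}(g_n)-P_{h,i}(g)}^2_{\mathbb{H}_{h,i}}\to 0$), and you should note that positive-definiteness of the candidate inner product follows because $\langle g,g\rangle_{\tilde{\mathbb{H}}}=0$ forces every $P_{h,i}(g)=0$ and hence $g\equiv 0$ by the compatibility relation, but these are routine.
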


\begin{proof}[Proof of Lemma \ref{lem:RKHS:X}]

For $h=1,\dots,H$ and for $t\in [-1,1]^{d_{\max}}$, recall that $T_h(t)\in [-1,1]^{d_h}$ is composed of the $d_h$ first coordinates of $t$. 
For $(h,i) \in \mathcal{I}$, let $\mathbb H_{0,h,i}$ be the set of functions of the form
\begin{equation} \label{eq:generic:H:zero:hi}
t \in 
[-1,1]^{d_h}
\mapsto
\sum_{a=1}^{N_{h,i}} 
\alpha_{h,i,a}
 k_{h,i} (  t_{h,i,a}, t ),
\end{equation}
for $N_{h,i} \in \mathbb{N}^*$, for $t_{h,i,1},\dots,t_{h,i,N_{h,i}} \in [-1,1]^{d_h}$, and for $\alpha_{h,i,1} , \dots , \alpha_{h,i,N_{h,i}} \in \mathbb{R}$. 

Also, let $\mathbb H_{0}$ be the set of functions of the form
\begin{equation} \label{eq:bivariate:g}
(t,h,i) \in \mathcal{X}
\mapsto
\sum_{a=1}^{N} 
\gamma_a k( (t_a,h_a,i_a),(t,h,i) ),
\end{equation}
for $N \in \mathbb{N}^*$, for $(t_1,h_1,i_1),\dots,(t_N,h_N,i_N) \in \mathcal{X}$, and for $\gamma_1 , \dots , \gamma_N \in \mathbb{R}$. 
Then, from Moore-Aronszajn theorem (see \cite[Theorem 3]{berlinet2004reproducing}), we have $\mathbb H_{0,h,i} \subset \mathbb H_{h,i}$ for $(h,i) \in \mathcal{I}$ and $\mathbb H_0 \subset \mathbb H$.

We have, for any function $g_0 \in \mathbb H_0$ of the form  \eqref{eq:bivariate:g}, using the independence of the $Z_{h,i}$ for $(h,i) \in \mathcal{I}$,
that $P_{h,i} 
(g_0)$ is the function
\[
t \in [-1,1]^{d_h}
\mapsto 
\sum_{\substack{a=1 \\ 
		(h_a,i_a)=(h,i)}}^{N}
\gamma_a
k_{h,i}(  T_{h,i}( t_a), t)
\]
and thus belongs to $\mathbb H_{0,h,i}$. 

Hence we have, using the independence of the $Z_{h,i}$ for $(h,i) \in \mathcal{I}$, again,
\begin{align} \label{eq:relating:RKHS:norms}
\norme{g_0}^2_{\mathbb H}
 = &
 \sum_{a,b=1}^{N}
 \gamma_a \gamma_b
 k( (t_a,h_a,i_a),(t_b,h_b,i_b) )) 
 \notag
 \\
 = &
 \sum_{(h,i) \in \mathcal{I}} 
  \sum_{\substack{a,b=1 \\ 
  (h_a,i_a)=(h_b,i_b)=(h,i)}}^N \gamma_a \gamma_b
 k_{h,i}(  T_{h_a}( t_a), T_{h_b}(t_b))
  \notag
 \\
 = &
  \sum_{(h,i) \in \mathcal{I}} 
\norme{
P_{h,i} 
(g_0)
}^2_{\mathbb H_{h,i}}.
\end{align}

Now let $g \in \mathbb H$. Again from Moore-Aronszajn theorem, there exists a sequence $(g_{N})_{{N} \in \mathbb{N}^*}$ of elements of $\mathbb H_0$, that is a Cauchy sequence with $\norme{\cdot{}}_{\mathbb H}$ converging pointwise to $g$.
This implies that the first property in \eqref{eq:structure:RKHS:X} holds for $g$ since it holds for $g_N$ of the form \eqref{eq:bivariate:g}. 
 For $(h,i) \in \mathcal{I}$, let $g_{{N},h,i} = P_{h,i} (g_N) $. From \eqref{eq:relating:RKHS:norms} and the linearity of $P_{h,i}$, 
 $(g_{{N},h,i})_{{N} \in \mathbb{N}^*}$  is a Cauchy sequence of elements of $\mathbb H_{0,h,i}$ with the norm $\norme{ \cdot }_{\mathbb H_{h,i}}$. Hence, by completeness, as ${N} \to \infty$, $g_{N,h,i}$ converges (with the norm $\norme{ \cdot }_{\mathbb H_{h,i}}$) to a function $g_{h,i} \in \mathbb H_{h,i}$.

Since RKHS convergence implies pointwise convergence \cite[Lemma 8]{berlinet2004reproducing}, we have, for $t \in [-1,1]^{d_h}$,
\begin{align*}
P_{h,i}(g)(t)
=  \lim_{{N} \to \infty}  P_{h,i} ( g_N)(t) 
=  \lim_{{N} \to \infty} g_{N,h,i}(t) 
 = g_{h,i}(t).
\end{align*}
 Hence the second property in \eqref{eq:structure:RKHS:X} holds and $g$ can be written as in \eqref{eq:structure:RKHS:X}.
Conversely, let a function $g : \mathcal{X} \to \mathbb{R}$ that satisfies \eqref{eq:structure:RKHS:X}. For $(h,i) \in \mathcal{I}$,
from Moore-Aronszajn theorem, there exist $(g_{N,h,i})_{N \in \mathbb{N}^*}$ as in \eqref{eq:generic:H:zero:hi} that is a Cauchy sequence of elements of $\mathbb H_{0,h,i}$ with the norm $\norme{\cdot{}}_{\mathbb H_{h,i}}$ and such that, as $N \to \infty$, $g_{N,h,i} \to P_{h,i}(g)$ pointwise
on $[-1,1]^{d_h}$. 
Then let us show that the function $g_{N}: \mathcal{X} \to \mathbb{R}$ defined by, for 
$ (t,h, i) \in \mathcal{X} $,
\begin{equation} \label{eq:def:gN}
g_{N}(t,h, i) =
g_{N,h,i}
\left( 
T_{h}(t)
\right),
\end{equation}
belongs to $\mathbb H_0$. We have
\begin{align*}
g_{N}(t,h, i)
= &
\sum_{a=1}^{N_{h,i}} 
\alpha_{h,i,a}
k_{h,i} (  t_{h,i,a}, T_{h}(t) )
\\
= &
\sum_{a=1}^{N_{h,i}} 
\alpha_{h,i,a}
k(  ((t_{h,i,a},0),h,i),(t,h, i) )
\\
= &
\sum_{(h',i') \in \mathcal{I}}
\sum_{a=1}^{N_{h',i'}} 
\alpha_{h',i',a}
k(  ((t_{h',i',a},0),h',i'),(t,h, i)),
\end{align*}
using the independence of the $P_{h,i}(W)$ for $(h,i) \in \mathcal{I}$. Hence, $g_N$ belongs to $\mathbb H_{0,N}$ and thus to $\mathbb H_0$.

Observe that we have, for $(h,i) \in \mathcal{I}$, $P_{h,i}(g_N) = g_{N,h,i}$ and thus from \eqref{eq:relating:RKHS:norms}, as $N \to \infty$,
$g_{N}$ is a Cauchy sequence of elements in  $\mathbb H_0$ with the norm $\norme{\cdot{}}_{\mathbb H}$. 
Hence  $g_{N}$ also converges pointwise on $\mathcal{X}$ and the pointwise limit function $\lim_{N \to \infty}
g_{N}$ belongs to $\mathbb{H}$ from Moore-Aronszajn theorem.
We also have, for fixed $(t,h, i) \in \mathcal{X}$,
\begin{align*}
	\lim_{N \to \infty} g_N(t,h, i)
	&= 
		\lim_{N \to \infty}
		g_{N,h,i}
		\left( 
		T_h(t)
		\right)
		\\&=
		P_{h,i}(g)
		\left( 
		T_h(t)
		\right) 
		=
		g(t,h,i),
\end{align*}
using at the end the first equality in \eqref{eq:structure:RKHS:X}.
Hence, any $g : \mathcal{X} \to \mathbb{R}$ satisfying \eqref{eq:structure:RKHS:X} does belong to $\mathbb H$. This shows that the RKHS $\mathbb H$ is as indicated in the lemma.

Finally, let us prove \eqref{eq:norm:equality:bivariate}.
Let $g : \mathcal{X} \to \mathbb{R}$ in $\mathbb{H}$ and thus satisfying \eqref{eq:structure:RKHS:X} . Then, for $(h,i) \in \mathcal{I}$,
there exists a sequence $(g_{N,h,i})_{N \in \mathbb{N}^*}$  of functions in $\mathbb H_{0,h,i}$ 
that converges to $P_{h,i}(g)$ 
with the norm $\norme{\cdot{}}_{\mathbb H_{h,i}}$.
 Define $g_{N}$ as in \eqref{eq:def:gN}. We have shown that $(g_{N})_{N \in \mathbb{N}^*}$ is a Cauchy sequence of elements in  $\mathbb H_0$ with the norm $\norme{\cdot{}}_{\mathbb H}$ that converges pointwise to $g$.
  Hence $g_{N}$ also converges with the norm $\norme{\cdot{}}_{\mathbb H}$ to $g$ and we have, using \eqref{eq:relating:RKHS:norms},
\begin{align*}
\norme{g}^2_{\mathbb H}
= &
\lim_{N \to \infty}
\norme{g_{N} }^2_{\mathbb H}
= 
\lim_{N \to \infty}
\sum_{(h,i) \in \mathcal{I}}
\norme{
P_{h,i} (g_N)
 }^2_{\mathbb H_{h,i}}.
\end{align*}
Because of \eqref{eq:def:gN}, we have $P_{h,i} (g_N) = g_{N,h,i}$ and thus
\begin{align*}
	\norme{g}^2_{\mathbb H}
	= &
	\lim_{N \to \infty}
	\sum_{(h,i) \in \mathcal{I}}
	\norme{
		g_{N,h,i}
	}^2_{\mathbb H_{h,i}}
	=
		\sum_{(h,i) \in \mathcal{I}}
	\norme{
P_{h,i}(g)
}^2_{\mathbb H_{h,i}}.
\end{align*}
This concludes the proof.
\end{proof}

\begin{lemma} \label{lem:decentering:small:balls}
Consider a centered Gaussian process $X$, valued in a Banach space $(E , \norme{\cdot})$ composed of functions from a set $T$ to $\mathbb{R}$. Let $\mathbb H_X$ be the RKHS of $X$, with RKHS-norm $\norme{\cdot}_{\mathbb H_X}$.  Let $f \in E$. Then we have, for any $\varepsilon >0$, 
\[
- \log \mathbb{P}
\left(
\norme{ X -f }
 < 2 \varepsilon
\right)
\leqslant 
\frac 12
\inf_{
\substack{
g \in \mathbb H_X \\
\norme{ g - f } < \varepsilon
}
}
 \norme{g}_{\mathbb H_X}^2 
- \log \mathbb{P}
\left(
\norme{ X  }
 \leqslant  \varepsilon
\right),
\]
with the convention that the infimum above is equal to $ + \infty$ if there are no $g \in \mathbb H_X$ such that $\norme{ g - f } < \varepsilon$.  
\end{lemma}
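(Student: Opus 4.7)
The plan is to reduce the statement to the classical Cameron–Martin shift inequality for centered Gaussian measures on a Banach space. The essential observation is that, if $g \in \mathbb H_X$ satisfies $\norme{g - f} < \varepsilon$, then by the triangle inequality the event $\{\norme{X - g} < \varepsilon\}$ is contained in the event $\{\norme{X - f} < 2\varepsilon\}$. Hence for any such $g$,
\[
\mathbb{P}\bigl(\norme{X - f} < 2\varepsilon\bigr) \;\geqslant\; \mathbb{P}\bigl(\norme{X - g} < \varepsilon\bigr).
\]

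Next I would invoke the standard decentering inequality for small balls of centered Gaussian measures: for every $g$ in the RKHS $\mathbb H_X$,
\[
\mathbb{P}\bigl(\norme{X - g} < \varepsilon\bigr) \;\geqslant\; e^{-\norme{g}_{\mathbb H_X}^2/2}\,\mathbb{P}\bigl(\norme{X} \leqslant \varepsilon\bigr).
\]
This is the classical Borell/Kuelbs–Li consequence of the Cameron–Martin formula and Anderson's inequality, and it is precisely the tool used in \cite{van2008rates} to relate shifted small-ball probabilities to centered ones. Combining the two displays and taking logarithms gives
\[
-\log \mathbb{P}\bigl(\norme{X - f} < 2\varepsilon\bigr) \;\leqslant\; \tfrac{1}{2}\norme{g}_{\mathbb H_X}^2 - \log \mathbb{P}\bigl(\norme{X} \leqslant \varepsilon\bigr),
\]
valid for every admissible $g$. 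Taking the infimum over all $g \in \mathbb H_X$ with $\norme{g - f} < \varepsilon$ yields exactly the asserted inequality; if no such $g$ exists, the infimum is $+\infty$ by convention and the bound is trivial.

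The only genuinely non-trivial ingredient is the shift inequality, but it is a standard fact in the Gaussian process literature that does not require any additional assumption on the Banach space $E$ beyond separability of the span of $X$. So the main obstacle is not technical; it is essentially a matter of citing (or briefly re-deriving) the Cameron–Martin shift bound in a form valid on the arbitrary separable Banach space $E$, and then stitching together the triangle-inequality inclusion with the monotonicity of $-\log$ and the infimum.
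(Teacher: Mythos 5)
Your proof is correct and takes essentially the same route as the paper: handle the trivial case, use the triangle inequality to embed $\{\norme{X-g}\leqslant\varepsilon\}$ in $\{\norme{X-f}<2\varepsilon\}$ (valid because $\norme{g-f}<\varepsilon$ is strict), apply the decentering/shift inequality (the paper cites Li and Shao, Theorem~3.1), and optimize over $g$. The only cosmetic blemish is the mixed open/closed inequality in your decentering step, $\P(\norme{X-g}<\varepsilon)\geqslant e^{-\norme{g}^2_{\mathbb H_X}/2}\,\P(\norme{X}\leqslant\varepsilon)$, which should be stated with $\leqslant$ on both sides (or both strict); but since the triangle-inequality inclusion already absorbs a closed ball, this is immediate to repair and does not affect the argument.
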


\begin{proof}[Proof of Lemma \ref{lem:decentering:small:balls}]
If there is no $g \in \mathbb H_X$ such that $\norme{ g - f } < \varepsilon$, then the inequality of the lemma is trivially true.
If there exists one $g \in \mathbb H_X$ with $\norme{ g - f } < \varepsilon$, we have, for such a $g$, using first the triangle inequality and then \cite[Theorem 3.1]{li2001gaussian},
\begin{align*}
 - \log \mathbb{P}
\left(
\norme{ X -f }
 < 2 \varepsilon
\right)
& \leqslant 
-
\log \mathbb{P}
\left(
\norme{ X -g }
 \leqslant \varepsilon
\right)
\leqslant 
\frac{1}{2} \norme{g}_{H_X}^2 
- \log \mathbb{P}
\left(
\norme{ X  }
 <  \varepsilon
\right).
\end{align*}
This concludes the proof. 
\end{proof}

\begin{lemma} \label{lemma:proba:small:ball:non-zero}
Let $T$ be a compact metric set and let $X$ be a continuous centered univariate Gaussian process indexed by $T$. Then, for $u >0$, 
\[
\mathbb{P}
\Bigl(
\sup_{t \in T}
\abs{ X(t) }
\leqslant
u
\Bigr)
> 0.
\]
\end{lemma}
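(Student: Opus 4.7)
The plan is to view the continuous centered Gaussian process $X$ as a Borel random element of the separable Banach space $E = \mathcal{C}^0(T,\mathbb{R})$ equipped with the uniform norm $\norme{\cdot}_\infty$ (separability follows from $T$ being a compact metric space, via the Stone-Weierstrass theorem or a direct argument using a countable dense subset of $T$). The law $\mu$ of $X$ is then a centered Gaussian Borel probability measure on $E$, and the event $\{\sup_{t\in T}\abs{X(t)}\leqslant u\}$ contains the open ball $B(0,u) = \{ f \in E\, ;\, \norme{f}_\infty < u\}$, so it suffices to show that $\mu(B(0,u))>0$.

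For this, I would invoke the classical fact that the topological support $S(\mu)$ of a centered Gaussian Borel probability measure on a separable Banach space is a closed linear subspace of that Banach space (see for instance Theorem 3.6.1 in Bogachev's \textit{Gaussian Measures}, or the standard discussion in \cite{van2008reproducing}). Since any closed linear subspace contains $0$, we have $0 \in S(\mu)$, which by definition of the support means that every open neighborhood of $0$ has strictly positive $\mu$-measure. In particular $\mu(B(0,u))>0$, which is the desired conclusion.

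The main (minor) obstacle is just to assemble the right framework: one must check that $\mu$ is indeed a well-defined Borel probability measure on $(E,\norme{\cdot}_\infty)$, which follows from the continuity of the trajectories together with the separability of $E$ ensuring that the cylindrical and Borel $\sigma$-algebras coincide. Everything else is a direct appeal to the structural result on supports of Gaussian measures, so the proof is essentially one paragraph and does not require any computation specific to this paper.
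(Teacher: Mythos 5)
Your argument is correct, but it follows a genuinely different route from the paper's. You embed the process into the separable Banach space $E = \mathcal{C}^0(T,\mathbb{R})$ and invoke the structural theorem that the topological support of a centered Gaussian Borel measure on a separable Banach space is a closed linear subspace (equivalently, Kallianpur's theorem identifying the support with the closure of the Cameron--Martin space), and then observe that $0$ lies in any such subspace, so every ball around $0$ has positive mass. The paper instead gives a more self-contained, constructive argument: it picks a dense sequence $(t_i)$ in $T$, forms the conditional expectations $\mathbb{E}[X(\cdot)\mid\mathcal{F}_n]$, uses an a.s.\ uniform convergence result to make $\mathbb{P}(\sup_T|X - \mathbb{E}[X\mid\mathcal{F}_n]|\leqslant u/2)$ close to $1$ for large $n$, and then splits via independence of $X - \mathbb{E}[X\mid\mathcal{F}_n]$ and $\mathbb{E}[X\mid\mathcal{F}_n]$; the second factor is handled by continuity of $\sup_T|\mathbb{E}[X(t)\mid\mathcal{F}_n]|$ as a function of the finite-dimensional Gaussian vector $(X(t_1),\dots,X(t_n))$. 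Your approach buys brevity and conceptual clarity at the cost of a heavier external theorem; the paper's approach assembles the conclusion from more elementary ingredients (which fits its explicit remark that it could not find a clean reference for this ``folklore'' fact). One thing worth spelling out in your write-up if you keep it: the law of $X$ is Gaussian in the Banach-space sense because every continuous linear functional on $\mathcal{C}^0(T,\mathbb{R})$ is integration against a finite signed Radon measure on $T$, and $\int_T X\,d\nu$ is an almost-sure limit of finite Gaussian linear combinations $\sum a_i X(t_i)$ (by continuity of paths and approximation of $\nu$-integrals), hence Gaussian.
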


\begin{proof}[Proof of Lemma \ref{lemma:proba:small:ball:non-zero}]
This result is often stated implicitly in the literature (for instance in \cite{li2001gaussian}) but we are not aware of an explicit statement. Here is a proof.

Let $u >0$ and let $(t_i)_{i \in \mathbb{N}^*}$ be a sequence of elements in $T$ that is dense in $T$.
For $n \in \mathbb{N}^*$, let $\mathcal{F}_n$ be the $\sigma$-algebra generated by $X(t_1), \dots , X(t_n)$.
 Then, from \cite[Proposition 2.9]{bect2019supermartingale} for instance, $\mathbb{E}[ X(\cdot) | \mathcal{F}_n ] \to X$ uniformly on $T$ and almost surely. Hence, one has
 \[
 \mathbb{P}
 \left(
 \sup_{t \in T}
 \abs{ X(t) - \mathbb{E}[ X(t) | \mathcal{F}_n ] }
 \leqslant \frac{u}{2}
 \right)
 \to 1
 \]
 as $n \to \infty$. Using the triangular inequality and from the independence between $X - \mathbb{E}[ X( \cdot ) | \mathcal{F}_n ]$ and $\mathbb{E}[ X( \cdot ) | \mathcal{F}_n]$ (as the conditional distribution of the first process given $\mathcal{F}_n$ is deterministic), we obtain
 \begin{align*}
 \mathbb{P}
\left(
\sup_{t \in T}
\abs{ X(t) }
\leqslant 
u
\right)
& \geqslant 
 \mathbb{P}
\left(
\sup_{t \in T}
 \abs{ X(t) - \mathbb{E}( X(t) | \mathcal{F}_n ) }
 \leqslant \frac{u}{2}
 ,
 \sup_{t \in T}
 \abs{\mathbb{E}( X(t) | \mathcal{F}_n ) }
 \leqslant \frac{u}{2}
\right)
\\
& = 
 \mathbb{P}
\left(
\sup_{t \in T}
 \abs{ X(t) - \mathbb{E}( X(t) | \mathcal{F}_n ) }
 \leqslant \frac{u}{2}
\right)
 \mathbb{P}
\left(
 \sup_{t \in T}
 \abs{\mathbb{E}( X(t) | \mathcal{F}_n ) }
 \leqslant \frac{u}{2}
\right).
 \end{align*}
The first probability above is non-zero for $n$ large enough, as seen before. The second probability is non-zero for any $n \in \mathbb{N}^*$, because $\sup_{t \in T}
 \abs{\mathbb{E}( X(t) | \mathcal{F}_n ) }$ is a continuous function of the Gaussian vector $(X(t_1), \dots , X(t_n))$, since the covariance function of $X$ is continuous \cite[Lemma 1]{ibragimov78gaussian}.
\end{proof}

\begin{lemma} \label{lem:proba:maximum:equals}
Let $X$ be a centered univariate Gaussian process with continuous realizations, indexed by a compact metric  set $T$. Then, for all $u >0$, 
\[
\mathbb{P}\left( \displaystyle\sup_{t \in T} \abs{X(t)} = u \right) =0.
\] 
In other words, the random variable $\displaystyle\sup_{t \in T} \abs{X(t)}$ is absolutely continuous on the half-line $(0,+\infty)$.
\end{lemma}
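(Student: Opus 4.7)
My plan is to reduce the claim to Tsirelson's theorem on the distribution of Gaussian seminorms, using a one-direction decomposition and a convexity argument to make the reduction transparent.

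First, if $\Var(X(t)) = 0$ for every $t \in T$, then $X$ is almost surely the zero function and the conclusion is immediate for $u > 0$. Otherwise, I would pick $t_0 \in T$ with $\sigma_0^2 := \Var(X(t_0)) > 0$ and decompose $X(t) = Z f(t) + Y(t)$, where $Z := X(t_0)/\sigma_0$ is standard Gaussian, $f(t) := \Cov(X(t), X(t_0))/\sigma_0$ is a continuous function with $f(t_0) = \sigma_0 > 0$, and $Y(t) := X(t) - Z f(t)$ is a centered Gaussian process with continuous sample paths. By Gaussian uncorrelatedness, $Z$ is independent of $Y$.

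Second, I would condition on $Y$. The map $g_Y(z) := \sup_{t \in T} \abs{z f(t) + Y(t)}$ is convex in $z$ (as a supremum of the convex functions $z \mapsto \abs{z f(t) + Y(t)}$), continuous, and tends to $+\infty$ as $\abs{z} \to \infty$ since $\sup_t \abs{f(t)} > 0$ and $\sup_t \abs{Y(t)}$ is almost surely finite. An elementary convexity lemma shows that the level set $g_Y^{-1}(\{u\})$ has zero Lebesgue measure unless $u = m_Y := \min_{z \in \R} g_Y(z)$, in which case it is the (possibly degenerate) interval of minimizers. Combined with the absolute continuity of the law of $Z$ and Fubini, this gives
\[
\P\Bigl(\sup_{t \in T} \abs{X(t)} = u\Bigr) = \E\bigl[\P(g_Y(Z) = u \mid Y)\bigr] \leqslant \P(m_Y = u).
\]

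Third, I would recognize $m_Y = \inf_{z \in \R} \sup_{t \in T} \abs{Y(t) - z f(t)}$ as the sup-norm distance from $Y$ to the one-dimensional subspace $\R f$ of the Banach space of continuous functions on $T$ equipped with the uniform norm. This is a continuous seminorm applied to the centered Gaussian random element $Y$, so by Tsirelson's theorem on the distribution of Gaussian seminorms (see, e.g., B.~S. Tsirelson, \emph{Theory of Probability and its Applications} 20 (1976), or M.~A. Lifshits, \emph{Gaussian Random Functions}, 1995), its distribution is absolutely continuous on $(0,+\infty)$, giving $\P(m_Y = u) = 0$ for every $u > 0$. The main obstacle is precisely this third step: the decomposition-and-convexity reduction only trades one atomlessness statement for another (for a different Gaussian seminorm), and one ultimately needs either Tsirelson's theorem or an iterative version of the argument that carefully tracks non-degeneracy of the residual Gaussian process.
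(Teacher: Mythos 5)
Your reduction is correct but more elaborate than the paper's, and ultimately rests on the same deep input. The paper restricts to a countable dense $T'\subset T$, uses the symmetry of the centered Gaussian law to bound $\P\bigl(\sup_{T}\abs{X}=u\bigr)\leqslant 2\,\P\bigl(\sup_{T'}X=u\bigr)$, and then applies Tsirelson's theorem for the supremum of a Gaussian process together with Lemma~\ref{lemma:proba:small:ball:non-zero} to rule out an atom at $u>0$. You instead decompose $X=Zf+Y$ with $Z$ a scalar Gaussian independent of the residual process $Y$, condition on $Y$, and use convexity of $g_Y$ to see that the level set $g_Y^{-1}(\{u\})$ is Lebesgue-null unless $u$ equals the conditional minimum $m_Y$; Fubini then gives $\P\bigl(\sup_T\abs{X}=u\bigr)\leqslant\P(m_Y=u)$. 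This is a nice observation, but, as you acknowledge yourself, it only trades the atomlessness of one Gaussian seminorm for that of another---$m_Y=\mathrm{dist}_{\infty}(Y,\R f)$---so Tsirelson is invoked at the end just as in the paper. One point you should make explicit: the seminorm form of Tsirelson's theorem excludes atoms strictly above the threshold $r_0=\inf\{r:\P(m_Y\leqslant r)>0\}$, so absolute continuity on all of $(0,\infty)$ requires $r_0=0$. This follows from $m_Y\leqslant\norme{Y}_{\infty}$ and Lemma~\ref{lemma:proba:small:ball:non-zero} applied to the continuous centered Gaussian process $Y$, which is precisely the role that lemma plays in the paper's proof on the $X$ side. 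In short, both proofs are Tsirelson-based; the paper's symmetry step is the shorter path, and your decomposition does not sidestep the hard theorem, though the convexity-of-$g_Y$ reduction is a pleasant alternative packaging.
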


\begin{proof}[Proof of Lemma \ref{lem:proba:maximum:equals}]

Let $T'$ be a dense countable subset of $T$. By density, since $T'$ is dense, and $X$ has continuous realizations, we have, for $u >0$,
\begin{align*}
\mathbb{P}( \sup_{t \in T} \abs{X(t)} = u )
& = 
\mathbb{P}( \sup_{t \in T'} \abs{X(t)} = u )
\\
& \leqslant 
\mathbb{P}( \sup_{t \in T'} X(t) = u )
+
\mathbb{P}( \inf_{t \in T'} X(t) = - u )
\\
& = 
\mathbb{P}( \sup_{t \in T'} X(t) = u )
+
\mathbb{P}( \sup_{t \in T'} ( -  X(t)) = u )
\\
& = 2
\mathbb{P}( \sup_{t \in T'} X(t) = u ),
\end{align*}
where we have used the symmetry of the law of a centered Gaussian process. This last probability is zero from Tsirelson's theorem (\cite{tsirelson76density}, see also \cite[Theorem 7.1]{AW09}), together with Lemma \ref{lemma:proba:small:ball:non-zero}.
\end{proof}

\begin{lemma} \label{lem:constraint:function:derivative}
Let $X$ be a centered univariate Gaussian process indexed by $[-1,1]^\ell$ for some $\ell \in \mathbb{N}^*$ with continuously differentiable realizations. Then, for $a >0$ and for $b_1 , \dots , b_\ell >0$, we have\begin{align*}
	&
\mathbb{P}
\Bigl(
\sup_{t \in [-1,1]^\ell}
\abs{ X(t) }
\leqslant a
,
\sup_{t \in [-1,1]^\ell}
\abs{ \partial X (t) / \partial x_i }
\leqslant b_i \text{ for }
i = 1 ,\dots, \ell
\Bigr)
\\
& \qquad
\geqslant 
\mathbb{P}
\Bigl(
\sup_{t \in [-1,1]^\ell}
\abs{ X(t) }
\leqslant a
\Bigr)
\prod_{i=1}^\ell
\mathbb{P}
\Bigl(
\sup_{t \in [-1,1]^\ell}
\abs{ \partial X (t) / \partial x_i }
\leqslant b_i
\Bigr).
\end{align*}
\end{lemma}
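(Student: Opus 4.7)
The plan is to reduce the inequality to a finite-dimensional statement by discretization, and then invoke the Gaussian correlation inequality proved by Royen (2014), which states that for any centered Gaussian measure on $\mathbb{R}^N$ and any two symmetric convex sets $C, D \subset \mathbb{R}^N$ one has $\mathbb{P}(C \cap D) \geqslant \mathbb{P}(C)\mathbb{P}(D)$. Iterated once at each step, this yields the analogous statement for any finite number of symmetric convex sets.

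First I would fix a countable dense subset $(t_k)_{k \geqslant 1}$ of $[-1,1]^\ell$. Since $X$ and each partial derivative $\partial X / \partial x_i$ have continuous realizations on the compact set $[-1,1]^\ell$, the relevant suprema coincide almost surely with the suprema over $(t_k)_k$. For each $n \in \mathbb{N}^*$ I would introduce the finite-dimensional events
\[
A_n = \Bigl\{ \max_{1 \leqslant k \leqslant n} |X(t_k)| \leqslant a \Bigr\}, \qquad B_n^{(i)} = \Bigl\{ \max_{1 \leqslant k \leqslant n} |\partial X(t_k)/\partial x_i| \leqslant b_i \Bigr\}
\]
for $i = 1 , \dots , \ell$. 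Each of these events is determined by the centered jointly Gaussian vector
\[
V_n = \bigl(X(t_k) , \partial X(t_k)/\partial x_1 , \dots , \partial X(t_k)/\partial x_\ell\bigr)_{1 \leqslant k \leqslant n},
\]
and is the preimage under $V_n$ of a symmetric rectangular (hence symmetric convex) subset of the ambient Euclidean space.

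Next I would apply the Gaussian correlation inequality iteratively: first to $A_n$ and $B_n^{(1)}$, then to $A_n \cap B_n^{(1)}$ and $B_n^{(2)}$, and so on, using at each stage that an intersection of symmetric convex sets is again symmetric convex. This produces
\[
\mathbb{P}\bigl(A_n \cap B_n^{(1)} \cap \dots \cap B_n^{(\ell)}\bigr) \geqslant \mathbb{P}(A_n) \prod_{i=1}^{\ell} \mathbb{P}\bigl(B_n^{(i)}\bigr).
\]
Finally, I would pass to the limit $n \to \infty$. The events $A_n$, $B_n^{(i)}$, and their intersection are decreasing in $n$; by continuity of the measure from above combined with the density of $(t_k)_k$ and continuity of sample paths, $\mathbb{P}(A_n)$ converges to $\mathbb{P}(\sup_{t \in [-1,1]^\ell}|X(t)| \leqslant a)$, and analogously for the two other families, yielding the claim.

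The only non-routine ingredient is Royen's theorem, which I would invoke as a black box; once it is granted, the rest is standard. The subtle point to guard against is that Khatri--Šidák alone would not suffice, because applied to the full Gaussian vector $V_n$ it gives a bound by a product over every individual coordinate, rather than the \emph{grouped} product $\mathbb{P}(A_n)\prod_i \mathbb{P}(B_n^{(i)})$ that appears on the right-hand side of the lemma, so the grouped Gaussian correlation inequality (equivalently, two-set Royen iterated) is genuinely needed.
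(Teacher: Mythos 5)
Your proof is correct and follows essentially the same approach as the paper: discretize on a dense sequence, recognize the finite-dimensional events as symmetric convex sets, apply Royen's Gaussian correlation inequality (iterated $\ell$ times), and pass to the limit using continuity of sample paths. Where you invoke continuity of measure from above for the limiting step, the paper uses the dominated convergence theorem together with Lemma~\ref{lem:proba:maximum:equals}, but with the non-strict $\leqslant$ constraints both arguments are equivalent, and your remark that coordinate-wise Khatri--Sidak alone would not yield the grouped product is well taken.
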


\begin{proof}[Proof of Lemma \ref{lem:constraint:function:derivative}]
Let $T = [-1,1]^\ell$ and  $(t_j)_{j \in \mathbb{N}^*}$ be a sequence of elements of $T$ that is dense in $T$. 
From the dominated convergence theorem and Lemma \ref{lem:proba:maximum:equals}, as $n \to \infty$,
\begin{align*}
	&
\mathbb{P}
\left(
\max_{j=1 , \dots , n}
\abs{ X(t_j) }
\leqslant a
,
\max_{j=1 , \dots , n}
\abs{ \partial X(t_j) / \partial x_i }
\leqslant b_i  \text{ for }
i = 1 , \dots , \ell
\right)
\\
& \qquad 
\to 
\mathbb{P}
\Bigl(
\sup_{t \in T}
\abs{ X(t) }
\leqslant a
,
\sup_{t \in T}
\abs{ \partial X (t) / \partial x_i }
\leqslant b_i  \text{ for }
i = 1 ,\dots, \ell
\Bigr).
\end{align*}
Let 
\[
A = \left\{
  (z_1,\dots,z_n,z^{(1)}_1 ,\dots,z^{(1)}_n, \dots,z^{(\ell)}_1 ,\dots,z^{(\ell)}_n) \in \mathbb{R}^{n + \ell n} ; \max_{j=1 , \dots , n}
\abs{ z_j } \leqslant a 
 \right\}
\] 
and, for $i  =1 , \dots , \ell$
\[
B_i = 
\left\{ (z_1,\dots,z_n,z^{(1)}_1 ,\dots,z^{(1)}_n, \dots,z^{(\ell)}_1 ,\dots,z^{(\ell)}_n) \in \mathbb{R}^{n + \ell n} ; \max_{j=1 , \dots , n}
\abs{ z^{(i)}_j } \leqslant b_i  
\right\}.
\]  
Then $A$ and $B$ are convex symmetric subsets of $\mathbb{R}^{2n}$. Hence, from the Gaussian correlation inequality (\cite{royen14simple}, see also \cite{latala2017royen}), we obtain
\begin{align*} 
& \mathbb{P}
\left(
\max_{j=1 , \dots , n}
\abs{ X(t_j) }
\leqslant a
,
\max_{j=1 , \dots , n}
\abs{ \partial X(t_j) / \partial x_i }
\leqslant b_i,
i = 1 , \dots , \ell
\right)
\\
& = 
\mathbb{P}
\Big(
\big( X(t_1),\dots,X(t_n),
\partial X(t_1) / \partial x_1,\dots, \partial X(t_n) / \partial x_1 
, \dots ,
\partial X(t_1) / \partial x_\ell,\dots, \partial X(t_n) / \partial x_\ell
\big) 
\\
& \quad 
\in A \cap \cap_{i=1}^\ell B_i
\Big)
\\
& \geqslant 
\mathbb{P}
\Big(
\big(
X(t_1),\dots,X(t_n),
\partial X(t_1) / \partial x_1,\dots, \partial X(t_n) / \partial x_1 
, \dots ,
\partial X(t_1) / \partial x_\ell,\dots, \partial X(t_n) / \partial x_\ell
\big)
\in A 
\Big) \\
& \quad \times 
\prod_{i=1}^\ell
\mathbb{P}
\Big( 
\big(
X(t_1),\dots,X(t_n),
\partial X(t_1) / \partial x_1,\dots, \partial X(t_n) / \partial x_1 
, \dots ,
\partial X(t_1) / \partial x_\ell,\dots, \partial X(t_n) / \partial x_\ell
\big)
\in  B_i
\Big)
\\
& = 
\mathbb{P}
\bigl(
\max_{j=1 , \dots , n}
\abs{ X(t_j) }
\leqslant a
\bigr)
\times \prod_{i=1}^\ell \P\Big( \max_{j=1,\dots,n} \abs{ \partial X(t_j)/\partial x_i}\leqslant b_i\Big)
\\
& 
\to 
\mathbb{P}
\Bigl(
\sup_{t \in T}
\abs{ X(t) }
\leqslant a
\Bigr)\times
\prod_{i=1}^\ell
\mathbb{P}
\Bigl(
\sup_{t \in T}
\abs{ \partial X (t) / \partial x_i }
\leqslant b_i
\Bigr),
\end{align*}
as $n \to \infty$, again from the dominated convergence theorem and Lemma \ref{lem:proba:maximum:equals}. This concludes the proof.
\end{proof}

Now we are able to prove  Theorem \ref{thm:analyzing:phi}.

\begin{proof}[Proof of Theorem \ref{thm:analyzing:phi}]

From the assumptions of the theorem and 
Lemma \ref{lem:RKHS:X}, we obtain that $w_0$ is indeed in the closure of $\mathbb H$ in $\mathbb{B}$ (with respect to $\|\cdot{}\|_{\infty}$).
Then, using  Lemma \ref{lem:RKHS:X}, $\varepsilon \in (0,1]$ and
\eqref{eq:analyzing:phi:cond:un},  we obtain
\begin{align}
& \phi_{c,w_0}(\varepsilon)  \notag
=  \underset{
\substack{ g \in \mathbb H \\
\norme{ g - w_0 }_{\infty} < \varepsilon 
} 
}{\inf}
\norme{g}_{\mathbb H}^2
-
\log
\mathbb{P}
\left(
\norme{ W_c }_{\infty} 
< \varepsilon
\right) -\log \P(\norme{W -w_0}_{\infty}<2\varepsilon,\ W\in \mathbb B_c) \notag
\\
 \leqslant  &
\underset{
\substack{ 
\{ g_{h,i} ; (h,i) \in \mathcal{I}  \}	\\
\text{for all} ~ (h,i), g_{h,i} \in \mathbb H_{h,i}, \\
\norme{ g_{h,i} - P_{h,i} (w_0) }_{\infty} < \varepsilon  
} 
}{\inf}
\left(
\sum_{(h , i ) \in \mathcal{I}} 
\norme{g_{h,i}}_{\mathbb H_{h,i}}^2
\right) \notag
\\
&
-
\log \mathbb{P}
\Big(
\norme{Z_{h,i}}_{\infty} < \varepsilon \text{ for } (h,i) \in \mathcal{I}
\text{ and } \notag\\
& \qquad \qquad
\norme{ \partial Z_{h,i} / \partial x_j }_{\infty} \leqslant K_{h,i,j} \text{ for } 
h = 2,\dots, H,
 \text{ for }  i=1,\dots,d_{h+1},
 \text{ for }  j = 1 , \dots , d_{h} 
\Big) \notag
\\
&
-
\log \mathbb{P}
\Big(
\norme{Z_{h,i} - P_{h,i}(w_0)}_{\infty} < 2 \varepsilon  
\text{ for } (h,i) \in \mathcal{I} \text{ and }
\notag
\\
& 
\qquad \qquad \norme{ \partial Z_{h,i} / \partial x_j }_{\infty} \leqslant K_{h,i,j} \text{ for }
h = 2,\dots, H, \text{ for } i=1,\dots,d_{h+1}, \text{ for }
j = 1 , \dots , d_{h} \Big). \notag
\end{align}

Using now the independence between the $Z_{h,i}$'s for $(h,i) \in \mathcal{I}$, we obtain
\begin{equation} \label{eq:bounding:phi:c}
\phi_{c,w_0}(\varepsilon)
\leqslant 
\sum_{
i = 1}^{d_2 
}
A_{i}
+
\sum_{
	\substack{
(h,i) \in \mathcal{I} \\
h \geqslant 2
}
}
B_{h,i},
\end{equation}
where we have defined
\[
A_i = 
\inf_{
	\substack{
 g \in \mathbb H_{1,i} \\
 \norme{  g - P_{1,i}(w_0) }_{\infty} < \varepsilon
}
}
\norme{  g }_{\mathbb H_{1,i}}^2
-  \log 
\mathbb{P}
\left(
\norme{Z_{1,i}}_{\infty} < \varepsilon 
\right)
-
 \log 
\mathbb{P}
\left(
\norme{Z_{1,i} - P_{1,i}(w_0)  }_{\infty} < 2 \varepsilon 
\right)
\]
and 
\begin{align*}
B_{h,i}  = &
\inf_{
	\substack{
		g \in \mathbb H_{h,i} \\
		\norme{  g - P_{h,i}(w_0) }_{\infty} < \varepsilon
	}
}
\norme{  g }_{\mathbb H_{h,i}}^2\\
&
-   \log 
\mathbb{P}
\Big(
\norme{Z_{h,i}}_{\infty} < \varepsilon ,
\norme{ \partial Z_{h,i} / \partial x_j }_{\infty} \leqslant K_{h,i,j}  \text{ for }
j = 1 , \dots , d_{h}  
\Big)
\\
&
-   \log 
\mathbb{P}
\Big(
\norme{Z_{h,i} - P_{h,i}(w_0)  }_{\infty} < 2 \varepsilon,
\norme{ \partial Z_{h,i} / \partial x_j }_{\infty} \leqslant K_{h,i,j} \text{ for } 
j = 1 , \dots , d_{h}  
\Big).
\end{align*}

First let us bound $A_i$. 
Since $P_{1,i}(w_0)$ is in the closure of $H_{1,i}$ in $(\mathcal{C}^0( [-1,1]^{d_1} ) , \mathbb{R}) , \norme{\cdot}_{\infty})$, using Lemmas \ref{lem:decentering:small:balls}, \ref{lemma:proba:small:ball:non-zero}, and \ref{lem:proba:maximum:equals}, we have

\begin{align} \label{eq:bound:A:un}
	A_i 
	& \leqslant 
		\inf_{
		\substack{
			g \in \mathbb H_{1,i} \\
			\norme{  g - P_{1,i}(w_0) }_{\infty} < \varepsilon
		}
	}
	\norme{  g }_{\mathbb H_{1,i}}^2
	-  \log 
	\mathbb{P}
	\left(
	\norme{Z_{1,i}}_{\infty} < \varepsilon 
	\right) \notag
	+ 
	\frac{1}{2}
		\inf_{
		\substack{
			g \in \mathbb H_{1,i} \\
			\norme{  g - P_{1,i}(w_0) }_{\infty} < \varepsilon
		}
	}
	\norme{  g }_{\mathbb H_{1,i}}^2
-  \log 
\mathbb{P}
\left(
\norme{Z_{1,i}}_{\infty} < \varepsilon 
\right)   \notag
\\
&  =
\frac{3}{2} 	\inf_{
	\substack{
		g \in \mathbb H_{1,i} \\
		\norme{  g - P_{1,i}(w_0) }_{\infty} < \varepsilon
	}
}
	\norme{  g }_{\mathbb H_{1,i}}^2 
- 2 \log 
\mathbb{P}
\left(
\norme{Z_{1,i}}_{\infty} < \varepsilon 
\right). 
\end{align}

Second let us bound $B_{h,i}$. 
 We have, from \eqref{eq:analyzing:phi:cond:deux},
\begin{align} \label{eq:bounding:Bhi}
&	B_{h,i}  \leqslant
	\inf_{
		\substack{
			g \in \mathbb H_{h,i} \\
			\norme{  g - P_{h,i}(w_0) }_{\infty} < \varepsilon
		}
	}
	\norme{  g }_{\mathbb H_{h,i}}^2
	-   \log 
	\mathbb{P}
	\Big(
	\norme{Z_{h,i}}_{\infty} < \varepsilon ,
	\norme{ \partial Z_{h,i} / \partial x_j }_{\infty} \leqslant K_{\min}, 
	j = 1 , \dots , d_{h}  
	\Big)
	\notag
	\\
	&\quad 
	-   \log 
	\mathbb{P}
	\Big(
	\norme{Z_{h,i} - P_{h,i}(w_0)  }_{\infty} < 2 \varepsilon,
	\norme{ \partial Z_{h,i} / \partial x_j
- 	
\partial P_{h,i}(w_0) / \partial x_j
 }_{\infty} \leqslant K_{\min}/2, 
	j = 1 , \dots , d_{h}  
	\Big)
	\notag
	\\
	& \leqslant 
		\inf_{
		\substack{
			g \in \mathbb H_{h,i} \\
			\norme{  g - P_{h,i}(w_0) }_{\infty} < \varepsilon
		}
	}
	\norme{  g }_{\mathbb H_{h,i}}^2
	-   \log 
	\mathbb{P}
	\Big(
	\norme{Z_{h,i}}_{\infty} < \varepsilon ,
	\norme{ \partial Z_{h,i} / \partial x_j }_{\infty} \leqslant K_{\min}, 
	j = 1 , \dots , d_{h}  
	\Big)\notag
	\\
	& \quad
	-   \log 
	\mathbb{P}
	\Big(
	\norme{Z_{h,i} - P_{h,i}(w_0)  }_{\infty}  \leqslant   \varepsilon,
	2 \varepsilon
	\norme{ \partial Z_{h,i} / \partial x_j
		- 	
		\partial P_{h,i}(w_0) / \partial x_j
	}_{\infty}/K_{\min}  \leqslant \varepsilon, 
	j = 1 , \dots , d_{h}  
	\Big).
\end{align}

Let us define the Banach space $\mathbb{B}_{h}$ as the set of continuous functions from $[-1,1]^{d_h} \times \{ 0 ,  1 , \dots , d_{h}  \}$,
equipped with the norm $\norme{ \cdot }_{\infty,\varepsilon}$ defined by, for $z \in \mathbb{B}_h$, 
\[
\norme{ z }_{\infty,\varepsilon}
=
\norme{ z(\cdot , 0) }_{\infty}
\vee
\frac{ 2 \varepsilon}{ K_{\min}} 
\norme{ z(\cdot , 1) }_{\infty} 
\vee 
\dots 
\vee 
\frac{ 2 \varepsilon}{ K_{\min}} 
\norme{ z(\cdot , d_{h}) }_{\infty} .
\]

We consider the map $M$ from $(\mathcal{C}^1( [-1,1]^{d_h} , \mathbb{R} ) , \norme{\cdot}_{\infty,1})$ to $(\mathbb{B}_h,\norme{\cdot}_{\infty,\varepsilon})$ defined by, for $f \in \mathcal{C}^1( [-1,1]^{d_h} , \mathbb{R})$,  $(Mf)( \cdot,0 ) = f$, $(Mf)(  \cdot , 1 ) = \partial f / \partial x_1 $, ... ,$(Mf)(  \cdot , d_h ) = \partial f / \partial x_{d_h} $. 
Let us define the Gaussian process $\overline{Z}$ on $\mathbb{B}_h$ by $\overline{Z} = M (Z_{h,i})$and similarly the function $\overline{w}_0$ in $\mathbb{B}_h$ by $\overline{w}_0 = M( P_{h,i}(w_0))$.
We write  $\mathbb H_{\overline{Z}}$ the RKHS of the Gaussian process $\overline{Z}$ and 
  $\norme{\cdot}_{\mathbb H_{\overline{Z}}}$ its RKHS-norm.

\begin{lemma} \label{lem:in:RKHS}
Recall that $h= 2,\dots,H$. We have $\mathbb H_{\overline{Z}} = \{ M (g) ; g \in  \mathbb H_{h,i}\}$ and, for $g \in \mathbb H_{h,i}$, $\norme{g}_{\mathbb H_{h,i}} = \norme{M(g)}_{H_{\overline{Z}}}$. 
\end{lemma}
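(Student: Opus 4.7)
The plan is to identify $\mathbb{H}_{\overline{Z}}$ by exhibiting an explicit candidate and then appealing to the uniqueness of the RKHS. Let $\mathbb{H}_{\ast} = \{ M(g) ; g \in \mathbb{H}_{h,i}\}$ and equip it with the bilinear form $\langle M(f), M(g)\rangle_{\ast} = \langle f, g\rangle_{\mathbb{H}_{h,i}}$. The map $M$ is injective, since $(Mg)(\cdot,0) = g$, so this inner product is well-defined and $M \colon \mathbb{H}_{h,i} \to \mathbb{H}_{\ast}$ is a Hilbert space isometric isomorphism. In particular $\mathbb{H}_{\ast}$ is a Hilbert space of functions on $[-1,1]^{d_h} \times \{0, 1, \dots, d_h\}$, and the announced norm identity $\norme{M(g)}_{\mathbb{H}_{\ast}} = \norme{g}_{\mathbb{H}_{h,i}}$ holds by construction.

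Next, I will identify the covariance function $\overline{k}$ of $\overline{Z}$. Since $Z_{h,i}$ has continuously differentiable realizations on the compact set $[-1,1]^{d_h}$, differentiation commutes with the covariance. Writing $\partial^{(2)}_\ell k_{h,i}(\cdot, s)$ for the partial derivative of $k_{h,i}(t,s)$ with respect to the $\ell$-th coordinate of its second argument, one gets for every $s \in [-1,1]^{d_h}$,
\[
\overline{k}(\cdot, (s, 0)) = M\bigl(k_{h,i}(\cdot, s)\bigr), \qquad \overline{k}(\cdot, (s, \ell)) = M\bigl(\partial^{(2)}_\ell k_{h,i}(\cdot, s)\bigr) \text{ for } \ell = 1, \dots, d_h.
\]
Both kernel sections lie in $\mathbb{H}_{\ast}$: clearly $k_{h,i}(\cdot, s) \in \mathbb{H}_{h,i}$, and by the classical representer result for derivative evaluation (see, e.g., \cite{berlinet2004reproducing}), $\partial^{(2)}_\ell k_{h,i}(\cdot, s)$ also belongs to $\mathbb{H}_{h,i}$ and satisfies $\langle f, \partial^{(2)}_\ell k_{h,i}(\cdot, s)\rangle_{\mathbb{H}_{h,i}} = (\partial f / \partial x_\ell)(s)$ for every $f \in \mathbb{H}_{h,i}$.

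The reproducing property on $\mathbb{H}_{\ast}$ is then a one-line verification: for any $f \in \mathbb{H}_{h,i}$, any $s \in [-1,1]^{d_h}$, and any $\ell \in \{1, \dots, d_h\}$,
\[
\langle M(f), \overline{k}(\cdot, (s,0))\rangle_{\ast} = \langle f, k_{h,i}(\cdot, s)\rangle_{\mathbb{H}_{h,i}} = f(s) = (Mf)(s,0),
\]
\[
\langle M(f), \overline{k}(\cdot, (s,\ell))\rangle_{\ast} = \langle f, \partial^{(2)}_\ell k_{h,i}(\cdot, s)\rangle_{\mathbb{H}_{h,i}} = (\partial f / \partial x_\ell)(s) = (Mf)(s,\ell).
\]
Hence $\mathbb{H}_{\ast}$ is a Hilbert space of functions with reproducing kernel $\overline{k}$. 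By the uniqueness of the RKHS, $\mathbb{H}_{\overline{Z}} = \mathbb{H}_{\ast}$ and the norm identity of the lemma follows.

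The main technical point is the classical derivative-representer fact used above. It is standard for $C^1$ kernels but still requires justification: the mean-square differentiability of $Z_{h,i}$ ensures that the difference quotients $\eta^{-1}(k_{h,i}(\cdot, s + \eta e_\ell) - k_{h,i}(\cdot, s))$ form a Cauchy net in $\mathbb{H}_{h,i}$ whose limit is $\partial^{(2)}_\ell k_{h,i}(\cdot, s)$, and one then differentiates the reproducing equation $\langle f, k_{h,i}(\cdot, s+\eta e_\ell)\rangle_{\mathbb{H}_{h,i}} = f(s+\eta e_\ell)$ in $\eta$ at $\eta = 0$. This is the only non-routine ingredient in the argument; the rest is unwinding definitions.
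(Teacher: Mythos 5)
Your proof is correct, but it follows a genuinely different route from the paper. The paper's argument is entirely abstract: it invokes Theorem 2.1 and Lemma 7.1 of van der Vaart and van Zanten's RKHS overview, which say respectively that the covariance-function RKHS and the Banach-space RKHS of a Gaussian random element coincide, and that an RKHS is transported isometrically by any injective continuous linear map between Banach spaces carrying the Gaussian element. Since the map $M$ from $(\mathcal{C}^1,\norme{\cdot}_{\infty,1})$ to $(\mathbb{B}_h,\norme{\cdot}_{\infty,\varepsilon})$ is injective, continuous, and linear, the lemma drops out without ever touching the covariance function of $\overline{Z}$. You instead construct the candidate RKHS explicitly, compute the kernel $\overline{k}$ of $\overline{Z}$ (as the original kernel and its cross- and mixed second partial derivatives), observe that the kernel sections $\overline{k}(\cdot,(s,0))$ and $\overline{k}(\cdot,(s,\ell))$ are the $M$-images of $k_{h,i}(\cdot,s)$ and $\partial_\ell^{(2)}k_{h,i}(\cdot,s)$, and verify the reproducing property by hand, so uniqueness of the reproducing kernel gives the identification. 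The abstract route is shorter and sidesteps any discussion of kernel smoothness, but it hides the work inside the cited transformation theorem; your route is more elementary and self-contained but puts all the weight on the derivative-representer fact — that the second-order difference quotients of $k_{h,i}$ are RKHS-Cauchy, so $\partial_\ell^{(2)}k_{h,i}(\cdot,s)\in\mathbb{H}_{h,i}$ and represents derivative evaluation — which you correctly flag as the crux. That fact in turn rests on mean-square differentiability of $Z_{h,i}$, which holds here because Gaussian processes with $\mathcal{C}^1$ sample paths are mean-square differentiable (a.s. convergence of the Gaussian difference quotients upgrades to $L^2$ convergence), and it implicitly gives $\mathbb{H}_{h,i}\subset\mathcal{C}^1$ as a byproduct, which the paper obtains instead via Lemma 7.1 applied to the identity map. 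Both proofs are valid; yours is more transparent about where the smoothness enters, the paper's is more modular.
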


The proof of Lemma \ref{lem:in:RKHS} is postponed after the proof of Theorem \ref{thm:analyzing:phi}.
Then, using the previous notation and Lemmas \ref{lem:decentering:small:balls} and  \ref{lem:in:RKHS}, we obtain
\begin{align*}
	&
	-   \log 
\mathbb{P}
\Big(
\norme{Z_{h,i} - P_{h,i}(w_0)  }_{\infty}  \leqslant   \varepsilon,
\frac{2 \varepsilon}{K_{\min}}
\norme{ \partial Z_{h,i} / \partial x_j
	- 	
	\partial P_{h,i}(w_0) / \partial x_j
}_{\infty} \leqslant \varepsilon \text{ for } 
j = 1 , \dots , d_{h}  
\Big)\\
	& =  -
	\log \mathbb{P}
	\left(
	\norme{ 
		\overline{Z} - \overline{w}_0
	}_{\infty, \varepsilon} \leqslant  \varepsilon
	\right)\notag\\ 
	& \leqslant  \frac 12
	\underset{
		\substack{
			h \in \mathbb H_{\overline{Z}} \\
			\norme{ h - \overline{w}_0 }_{\infty, \varepsilon} < \frac{\varepsilon}{2}
		} 
	}{\inf}
	\norme{h}_{\mathbb H_{\overline{Z}}}^2
	- \log \mathbb{P}
	\left(
	\norme{ 
		\overline{Z}
	}_{\infty, \varepsilon} \leqslant  \frac{\varepsilon}{2}
	\right)
	\notag
	\\
	& = \frac 12
	\underset{
		\substack{
			g \in \mathbb H_{h,i} \\
			\norme{ g - P_{h,i}(w_0) }_{\infty} < \frac{\varepsilon}{2} \\
			\norme{ \partial g / \partial x_j - \partial P_{h,i} (w_0) / \partial x_j }_{\infty} < \frac{K_{\min}}{4}, \\
			j = 1 , \dots , d_h
		} 
	}{\inf}
	\norme{g}_{\mathbb H_{h,i}}^2 \notag
	- \log \mathbb{P}
	\Bigl(
	\norme{ 
		Z_{h,i}
	}_{\infty} \leqslant  \frac{\varepsilon}{2}
	,
	\norme{ 
		\partial Z_{h,i} / \partial x_j
	}_{\infty}
	\leqslant  \frac{K_{\min}}{4} \text{ for }
	j = 1, \dots , d_h
	\Bigr).
\end{align*}

Now using the above display and \eqref{eq:bounding:Bhi}, together with Lemmas  \ref{lem:proba:maximum:equals}, and \ref{lem:constraint:function:derivative}, we obtain
\begin{align*}
B_{h,i}
&
 \leqslant 
 \frac{3}{2}
 \underset{
\substack{
g \in \mathbb H_{h,i} \\
\norme{ g - P_{h,i}(w_0) }_{\infty} < \frac{\varepsilon}{2} \\
\norme{ \partial g / \partial x_j - \partial P_{h,i}(w_0) / \partial x_j }_{\infty} < \frac{K_{\min}}{4}, \\
j = 1 , \dots , d_h
} 
}{\inf}
\norme{g}_{\mathbb H_{h,i}}^2
- 2 \log \mathbb{P}
\Bigl(
\norme{ 
Z_{h,i}
 }_{\infty} \leqslant  \frac{\varepsilon}{2}
\Bigr)
\\
&
\quad - 2 
\sum_{j = 1}^{d_h}
\log \mathbb{P}
\Bigl(
 \norme{ 
\partial Z_{h,i} / \partial x_j
 }_{\infty}
  \leqslant  \frac{K_{\min}}{4}
\Bigr).
\end{align*}
Then the proof is concluded using \eqref{eq:bounding:phi:c} and \eqref{eq:bound:A:un}. 
\end{proof}

It remains to prove Lemma \ref{lem:in:RKHS}.

\begin{proof}[Proof of Lemma \ref{lem:in:RKHS}]
By assumption, the Gaussian process $Z_{h,i}$, with input space $[-1,1]^{d_h}$, has continuously differentiable realizations. Hence, $Z_{h,i}$ can be viewed as a random element of the Banach space $\mathcal{C}^0( [-1,1]^{d_h} , \mathbb{R} )$ of continuous functions on
 $[-1,1]^{d_h}$, equipped with the norm $\norme{\cdot}_{\infty}$. 
The space $(\mathbb H_{h,i},\norme{\cdot}_{\mathbb H_{h,i}})$ is defined as in  \cite[Section 2.1]{van2008reproducing} (from the covariance function of $Z_{h,i}$). From \cite[Theorem 2.1]{van2008reproducing}, $\mathbb H_{h,i}$ can be equivalently defined as in  \cite[Section 2.2]{van2008reproducing} from the law of $Z_{h,i}$ in $\mathcal{C}^0( [-1,1]^{d_h} , \mathbb{R} )$ and from the norm $\norme{\cdot}_{\infty}$.

The identity map from $(\mathcal{C}^1( [-1,1]^{d_h} , \mathbb{R} ) , \norme{\cdot}_{\infty,1})$ to $(\mathcal{C}^0( [-1,1]^{d_h} , \mathbb{R} ) , \norme{\cdot}_{\infty})$ is a one-to-one, continuous and linear application. Hence, from \cite[Lemma 7.1]{van2008reproducing}, the RKHS $\mathbb H_{h,i}$ can also be equivalently defined as in  \cite[Section 2.2]{van2008reproducing}, but this time by seeing $Z_{h,i}$ as a random element of $(\mathcal{C}^1( [-1,1]^{d_h} , \mathbb{R} ) , \norme{\cdot}_{\infty,1})$.  

Now we consider the Banach space $\mathbb{B}_h$ equipped with the norm $\norme{\cdot}_{\infty,\varepsilon}$. We consider the map $M$ introduced above. Then $M$ is a one-to-one, continuous and linear application  from $(\mathcal{C}^1( [-1,1]^{d_h} , \mathbb{R} ) , \norme{\cdot}_{\infty,1})$ to $(\mathbb{B}_h , \norme{\cdot}_{\infty,\varepsilon})$.
Also recall that we have defined $\overline{Z} = M (Z_{h,i})$.
 Hence, from \cite[Lemma 7.1]{van2008reproducing}, the RKHS  $\mathbb H_{\overline{Z}}$ of the Gaussian process $\overline{Z}$ (defined as in \cite[Section 2.2]{van2008reproducing}, by seeing $\overline{Z}$ as a random element of $(\mathbb{B}_h , \norme{\cdot}_{\infty,\varepsilon})$), is equal to $\{M (g) ; g \in\mathbb H_{h,i}\}$. Furthermore, for $g \in \mathbb H_{h,i}$, we have $\norme{g}_{\mathbb H_{h,i}} = \norme{M(g)}_{\mathbb H_{\overline{Z}}}$, where we recall that $\norme{\cdot}_{\mathbb H_{\overline{Z}}}$ is the norm of the RKHS $\mathbb H_{\overline{Z}}$.  From  \cite[Theorem 2.1]{van2008reproducing}, this latter definition of $\mathbb H_{\overline{Z}}$ coincides with the definition of the RKHS $\mathbb H_{\overline{Z}}$ in the statement of the lemma (from \cite[Section 2.1]{van2008reproducing}, from the covariance function of $\overline{Z}$). The two definitions of the corresponding norms also coincide (again from  \cite[Theorem 2.1]{van2008reproducing}). 
\end{proof}

\subsection{Proof of Theorem \ref{th:Th2.1}}

\begin{proof}[Proof of Theorem \ref{th:Th2.1}]
Let $C>1$ and let $(\varepsilon_{c,n})_{n \in \mathbb{N}^*}$ satisfy the conditions of the theorem. We set 
$\varepsilon_n^2=\varepsilon_{c,n}^2-\log \P(W\in \mathbb B_c)/Cn\geqslant \varepsilon_{c,n}^2$. By Lemma \ref{lem:ineg_phi_phi_c} (with the definition of $\phi_{w_0}$ in \eqref{def:w0}) and since $\phi_{c,w_0}$ is non-increasing, we have
\begin{align*}
 \phi_{w_0}(\varepsilon_n) \leqslant 
 \phi_{c,w_0}( \varepsilon_n)
 \leqslant \phi_{c,w_0}(\varepsilon_{c,n})
 \leqslant n \varepsilon_{c,n}^2
 \leqslant n \varepsilon_n^2.
\end{align*} 
Furthermore, since $\varepsilon_n^2 \geqslant \varepsilon_{c,n}^2$, we have
$e^{-C n \varepsilon_n^2 } < 1/2$.  
Hence we can apply \cite[Theorem 2.1]{van2008rates} to $W$ with $\varepsilon_n$ and $C$.
Thus, there exists a sequence of measurable sets $(B_n)_{n\in \N}$, $B_n \subset \mathbb{B}$ for all $n\in \N$, such that 
\begin{align}
\log N(3\varepsilon_n,B_n,\norme{\cdot{}}_{\infty})
&\leqslant 6 Cn\varepsilon_n^2,
\label{eq:Thm2p1:VdVVZ08:1}\\
\P(W\notin B_n)&\leqslant e^{- Cn\varepsilon_n^2},
\label{eq:Thm2p1:VdVVZ08:2}\\
\P(\norme{W-w_0}_{\infty}<2\varepsilon_n)&\geqslant e^{-n\varepsilon_n^2}.
\label{eq:Thm2p1:VdVVZ08:3}
\end{align}
Now let $B_{c,n}= B_n\cap \mathbb B_c$. Clearly, $B_{c,n}\subset B_n$. 
Furthermore, since $n \varepsilon_{c,n}^2 \to \infty$, there exists a fixed $n_C \in \mathbb{N}^*$ such that, for $n\geqslant n_C$, $\varepsilon_{c,n}\geqslant \varepsilon_n/2$.
Thus, for $n\geqslant n_C$, 
\begin{align*}
\log N( 6 \sqrt{C} \varepsilon_{c,n} ,B_{c,n},\norme{\cdot{}}_{\infty}) 
&
\leqslant 
\log N( 3 \sqrt{C}  \varepsilon_n,B_{c,n},\norme{\cdot{}}_{\infty}) & \text{since $\varepsilon_{c,n} \geqslant \varepsilon_n/2$} \\
&\leqslant 
\log N( 3 \varepsilon_n, B_n,\norme{\cdot{}}_{\infty}) 
 & \text{since $B_{c,n}\subset B_n$ and $C>1$}\\
& \leqslant 
6 C n \varepsilon_n^2 & \text{from $\eqref{eq:Thm2p1:VdVVZ08:1}$}\\
&\leqslant  24 C n\varepsilon_{c,n}^2  & \text{since $\varepsilon_{c,n} \geqslant \varepsilon_n/2$}\\
&\leqslant
n  (12 \sqrt{C} \varepsilon_{c,n})^2.
\end{align*}
Hence \eqref{eq:Th2.1:1} holds. 
As for \eqref{eq:Th2.1:2}, from \eqref{eq:Thm2p1:VdVVZ08:2}, one gets
\begin{align*}
\P(W_c\notin B_{c,n})
=\P(W_c\notin  B_n)
=\P(W\notin  B_n\vert W\in \mathbb B_c)
\leqslant 
\frac{e^{-Cn\varepsilon_n^2}}{\P(W\in \mathbb B_c)}
=
e^{-Cn\varepsilon_{c,n}^2}
\end{align*}
so that \eqref{eq:Th2.1:2} holds. Last, one has
\begin{align*}
\P(\norme{W_c-w_0}_{\infty}<2\varepsilon_{c,n})
&\geqslant \P(\norme{W -w_0}_{\infty}<2\varepsilon_{c,n},\  W\in \mathbb B_c)
\geqslant e^{-\phi_{c,w_0}(\varepsilon_{c,n})}\geqslant e^{-n\varepsilon_{c,n}^2}
\end{align*}
so that  \eqref{eq:Th2.1:3} holds.
\end{proof}

\subsection{Proof of Theorem \ref{th:Th3.1}}

Before proving Theorem \ref{th:Th3.1}, let us establish the following lemma and define 
\[
K_{\max} = \max_{h=2 , \dots, H}
\max_{i=1 , \dots , d_{h+1}}
\max_{j=1 ,\dots , d_h}
K_{h,i,j}.
\]
Recall the definition of $\psi\mapsto C_{\psi}$ in \eqref{def:Cpsi}.

\begin{lemma} \label{lem:norme:infini:composition}
For $w , z \in \mathbb{B}_c$, we have
\[
\norme{ C_w - C_z }_{\infty}
\leqslant 
K_H
\norme{w - z}_{\infty}
\] 
where
$
K_H = \left( 1+ K_{\max} d_{\max} \right)^H. 
$
\end{lemma}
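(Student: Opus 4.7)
I would prove this by induction on the layer index, which reduces everything to a one-step Lipschitz estimate plus a simple recursion.

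Concretely, for $w, z \in \mathbb{B}_c$ and $h = 1, \dots, H$, let
\[
u_h(t) = C_{w,h} \circ \cdots \circ C_{w,1}(t), \qquad v_h(t) = C_{z,h} \circ \cdots \circ C_{z,1}(t), \qquad t \in [-1,1]^d,
\]
and set $e_h = \sup_{t \in [-1,1]^d} \|u_h(t) - v_h(t)\|_\infty$, where $\|\cdot\|_\infty$ denotes the max-norm on $\mathbb{R}^{d_{h+1}}$. Because $w, z \in \mathbb{B}_c$ satisfy the value constraint \eqref{eq:first:constraint} for layers $h = 1, \dots, H-1$, the intermediate vectors $u_h(t), v_h(t)$ lie in $[-1,1]^{d_{h+1}}$ for every such $h$, so the compositions are well-defined and we may feed them into the next layer. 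The goal is to show by induction that
\[
e_h \leqslant (1 + K_{\max} d_{\max})^h \, \|w - z\|_\infty,
\]
after which $h = H$ yields the claim.

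For the base case $h=1$, $|P_{1,i}(w)(t) - P_{1,i}(z)(t)| = |w((t,0),1,i) - z((t,0),1,i)| \leqslant \|w - z\|_\infty$ for every $t \in [-1,1]^d$ and $i = 1, \dots, d_2$, so $e_1 \leqslant \|w-z\|_\infty$. For the inductive step with $h \geqslant 2$, I split via the triangle inequality:
\[
\|u_h(t) - v_h(t)\|_\infty \leqslant \|C_{w,h}(u_{h-1}(t)) - C_{w,h}(v_{h-1}(t))\|_\infty + \|C_{w,h}(v_{h-1}(t)) - C_{z,h}(v_{h-1}(t))\|_\infty.
\]
For the first term, since both $u_{h-1}(t)$ and $v_{h-1}(t)$ lie in $[-1,1]^{d_h}$ and $P_{h,i}(w) \in \mathbb{B}_c$ is continuously differentiable with $\|\partial P_{h,i}(w)/\partial x_j\|_\infty \leqslant K_{h,i,j} \leqslant K_{\max}$, the mean value theorem applied on the segment (contained in the convex set $[-1,1]^{d_h}$) gives a componentwise Lipschitz bound $K_{\max} d_h \|u_{h-1}(t) - v_{h-1}(t)\|_\infty \leqslant K_{\max} d_{\max} \, e_{h-1}$. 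For the second term, evaluating $P_{h,i}(w) - P_{h,i}(z)$ at the fixed point $v_{h-1}(t) \in [-1,1]^{d_h}$ gives a bound of $\|w-z\|_\infty$. Hence
\[
e_h \leqslant K_{\max} d_{\max} \, e_{h-1} + \|w-z\|_\infty,
\]
and an easy induction (using $(1+a)^h = (1+a)^{h-1} + a(1+a)^{h-1} \geqslant 1 + a(1+a)^{h-1}$ with $a = K_{\max} d_{\max} \geqslant 0$) gives the claimed geometric bound.

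There is no real obstacle; the only care needed is bookkeeping about where each constraint is used. The value constraint on layers $1, \dots, H-1$ is exactly what places the intermediate iterates in the common compact set $[-1,1]^{d_h}$, and the derivative constraint on layers $2, \dots, H$ is exactly what turns the uniform bound $\|w-z\|_\infty$ into an $(K_{\max} d_{\max})$-contraction at each step. The last layer ($h=H$) is handled identically to the others even though its output is unconstrained, because we only need the derivative bound there and the input $u_{H-1}(t), v_{H-1}(t)$ still lies in $[-1,1]^{d_H}$.
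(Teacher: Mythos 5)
Your proof is correct and takes essentially the same approach as the paper: a one-layer triangle-inequality split into a ``change of function at the same argument'' term (bounded by $\norme{w-z}_\infty$) and a ``same function, different argument'' term (bounded via the derivative constraints and the mean-value theorem on the convex set $[-1,1]^{d_h}$), followed by a geometric recursion. The only cosmetic differences are that the paper runs the induction from $h=H$ downward and factors the outer $C_{z,H}$ rather than $C_{w,h}$ in the Lipschitz term, and that it packages the recursion as $(1+K_{\max}d_{\max})\max(\cdot,\cdot)$ rather than your $K_{\max}d_{\max}\,e_{h-1}+\norme{w-z}_\infty$; both yield the claimed bound.
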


\begin{proof}[Proof of Lemma \ref{lem:norme:infini:composition}]

Let, for $h = 1, \dots, H$, $C_{w,h} = (P_{h,1}(w) , \dots , P_{h,d_{h+1}}(w) )$ and $C_{z,h} = (P_{h,1}(z) , \dots , P_{h,d_{h+1}}(z) )$. Then we have, for $t \in [-1,1]^d$,  recalling that $C_{w,H}$ is real-valued and letting $\nabla f$ be the gradient of a function $f$,
\begin{align*}
&
	\left| 
C_w(t) - C_z(t)	
	\right|
	\\
=
	&
	\left| 
	C_{w,H} \circ C_{w,H-1} \circ \dots \circ C_{w,1}(t)
	-
	C_{z,H} \circ C_{z,H-1} \circ \dots \circ C_{z,1}(t)
	\right| 
	\\
	 \leqslant 
	 &
		\left| 
	C_{w,H} \circ C_{w,H-1} \circ \dots \circ C_{w,1}(t)
	-
	C_{z,H} \circ C_{w,H-1} \circ \dots \circ C_{w,1}(t)
	\right| 
	\\
	& + 
		\left| 
	C_{z,H} \circ C_{w,H-1} \circ \dots \circ C_{w,1}(t)
	-
	C_{z,H} \circ C_{z,H-1} \circ \dots \circ C_{z,1}(t)
	\right| 
	\\
	\leqslant &
	\norme{      C_{w,H}  -  	C_{z,H} \   }_{\infty}
	+
		\sup_{x\in [-1,1]^{d_H}} \norme{\nabla C_{z,H}(x)}\times
		\norme{ 
		C_{w,H-1} \circ \dots \circ C_{w,1}(t) -  C_{z,H-1} \circ \dots \circ C_{z,1}(t)
		} 
		\\
		\leqslant &
		\norme{ w - z }_{\infty} 
		+
		K_{\max} 
		d_H 
		\max_{i=1,\dots,d_{ H}}
		\left| 
		P_{H-1,i}(w)
		\circ  
\dots \circ  
		C_{w,1} (t)
		-
		P_{H-1,i}(z)
	\circ 
	C_{z,H-2}
	\circ  \dots \circ  
	C_{z,1} (t)	
		\right| 
		\\
		\leqslant
		& 
		\left( 
		1  +  K_{\max} {d_{\max}} 
		\right)\times\max \Bigg(
		\norme{ w - z }_{\infty}, \\
		&  \qquad \qquad
			\max_{i=1 , \dots,d_H}
		\left| 
		P_{H-1,i}(w)
		\circ 
		C_{w,H-2}
		\circ  \dots \circ  
		C_{w,1} (t)
		-
		P_{H-1,i}(z)
		\circ 
		C_{z,H-2}
		\circ  \dots \circ  
		C_{z,1} (t)	
		\right| 
		\Bigg).
\end{align*}
Hence, we can obtain the lemma by a descending induction from $H$ to $1$. 	
\end{proof}

Now we define the Kullback-Leibler divergence: for $f,g : [-1,1]^d \to (0,\infty)$ with $\int_{[-1,1]^d} f(t) dt = 
\int_{[-1,1]^d} g(t) dt = 1$, let 
 \begin{align}\label{def:K}
  K(f,g)=\int_{[-1,1]^d}\log(f(t)/g(t))f(t)dt,
 \end{align}
 
if $\int_{[-1,1]^d}\abs{\log(f(t)/g(t))}f(t)dt<+\infty$ and $K(f,g)=+\infty$ else. We also introduce 
\begin{align}\label{def:V}
V(f,g)=\int_{[-1,1]^d}(\log(f(t)/g(t)))^2 f(t) dt.
\end{align}

Then we have the following lemma, that follows from Lemma \ref{lem:norme:infini:composition} and \cite[Lemma 3.1]{van2008rates}.

\begin{lemma}\label{lem:Lem3.1}
	Recall the Hellinger distance $h$ from \eqref{def:hellinger}, the function $\psi \mapsto C_{\psi}$ from \eqref{def:Cpsi}, and the function $z \mapsto p_z$ from \eqref{def:dens}.
For any functions $v$ and $w$ in $\mathbb B_c$, we have 
\begin{align*}
h(p_{C_v},p_{C_w})&\leqslant K_H\norme{v-w}_{\infty}e^{K_H\norme{v-w}_{\infty}/2},\\
K(p_{C_v},p_{C_w})&\leqslant c K_H^2\norme{v-w}_{\infty}^2(1+K_H\norme{v-w}_{\infty}) e^{K_H\norme{v-w}_{\infty}}, 
~ ~ ~ ~ \text{and} \\
V(p_{C_v},p_{C_w})&\leqslant c K_H^2\norme{v-w}_{\infty}^2 (1+K_H\norme{v-w}_{\infty})^2 e^{K_H\norme{v-w}_{\infty}},
\end{align*}
where $K_H$ is as in Lemma \ref{lem:norme:infini:composition} and $c \geqslant 1$ is a finite constant.
\end{lemma}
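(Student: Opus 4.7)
The plan is to combine the Lipschitz-type bound from Lemma \ref{lem:norme:infini:composition} with the single-process bounds given in \cite[Lemma 3.1]{van2008rates}. The key observation is that $p_{C_v}$ and $p_{C_w}$ are both of the form $p_z$ with $z = C_v$ (resp.\ $z = C_w$) a continuous function on $[-1,1]^d$, so the classical estimates on $h(p_f,p_g)$, $K(p_f,p_g)$, and $V(p_f,p_g)$ in terms of $\|f-g\|_\infty$ apply directly to $f=C_v$, $g=C_w$.

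More precisely, I would first verify that for $v,w \in \mathbb{B}_c$ the functions $C_v,C_w$ are continuous on $[-1,1]^d$ with values in a bounded set (this follows since each layer $C_{v,h}$ maps $[-1,1]^{d_h}$ into $[-1,1]^{d_{h+1}}$ by the constraints in $\mathbb{B}_c$, and the final layer $P_{H,1}(v)$ is continuous), so that $p_{C_v}$ and $p_{C_w}$ are well-defined densities of the form \eqref{def:dens}. Then \cite[Lemma 3.1]{van2008rates} applied to $f = C_v$ and $g = C_w$ yields
\begin{align*}
h(p_{C_v},p_{C_w}) &\leqslant \|C_v-C_w\|_\infty\, e^{\|C_v-C_w\|_\infty/2},\\
K(p_{C_v},p_{C_w}) &\leqslant c\,\|C_v-C_w\|_\infty^2\,(1+\|C_v-C_w\|_\infty)\,e^{\|C_v-C_w\|_\infty},\\
V(p_{C_v},p_{C_w}) &\leqslant c\,\|C_v-C_w\|_\infty^2\,(1+\|C_v-C_w\|_\infty)^2\,e^{\|C_v-C_w\|_\infty},
\end{align*}
for a finite constant $c \geqslant 1$.

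Next I would invoke Lemma \ref{lem:norme:infini:composition}, which states $\|C_v-C_w\|_\infty \leqslant K_H\|v-w\|_\infty$. Substituting this bound on the right-hand sides above, and using monotonicity of $x\mapsto x\,e^{x/2}$, $x\mapsto x^2(1+x)e^x$, and $x\mapsto x^2(1+x)^2 e^x$ on $[0,\infty)$, yields the three claimed inequalities.

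This reduction is essentially immediate once Lemma \ref{lem:norme:infini:composition} is in hand; no real obstacle arises in the proof of Lemma \ref{lem:Lem3.1} itself. The only subtlety is the book-keeping check that the hypotheses of \cite[Lemma 3.1]{van2008rates} are satisfied (continuity of $C_v,C_w$ on the compact domain $[-1,1]^d$), which follows directly from the definitions of $\mathbb{B}_c$ and of the composition $C_\cdot$ in \eqref{def:Cpsi}; all the actual work has already been done in proving Lemma \ref{lem:norme:infini:composition}.
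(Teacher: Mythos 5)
Your proof is correct and follows essentially the same route as the paper's: apply \cite[Lemma 3.1]{van2008rates} to $f=C_v$, $g=C_w$ to obtain bounds in terms of $\norme{C_v-C_w}_{\infty}$, then conclude by the Lipschitz bound $\norme{C_v-C_w}_{\infty}\leqslant K_H\norme{v-w}_{\infty}$ from Lemma \ref{lem:norme:infini:composition} and monotonicity of the bounding functions.
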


\begin{proof}[Proof of Lemma \ref{lem:Lem3.1}]
By \cite[Lemma 3.1]{van2008rates}, one has, with a finite constant $c \geqslant 1$, 
\begin{align}
h(p_{C_v},p_{C_w}) &
 \leqslant \norme{C_v-C_w}_{\infty}e^{\norme{C_v-C_w}_{\infty}/2}, \label{eq:for:lemma:one} \\
 K(p_{C_v},p_{C_w}) &
 \leqslant c \norme{C_v-C_w}_{\infty}^2
\left(
1 + \norme{C_v-C_w}_{\infty}
\right)
 e^{\norme{C_v-C_w}_{\infty}},\label{eq:for:lemma:two}  \\
 V(p_{C_v},p_{C_w}) &
 \leqslant c \norme{C_v-C_w}_{\infty}^2
\left(
1 + \norme{C_v-C_w}_{\infty}
\right)^2
 e^{\norme{C_v-C_w}_{\infty}}. \label{eq:for:lemma:three} 
\end{align}
Thus we conclude the proof from Lemma \ref{lem:norme:infini:composition}.
\end{proof}

Now we are able to prove Theorem \ref{th:Th3.1}.

\begin{proof}[Proof of Theorem \ref{th:Th3.1}]
Consider a sequence $(\varepsilon_{c,n})_{n\in \N}$ satisfying the conditions of the theorem.
Define $w_0 \in \mathbb{B}$ by, for $(h,i) \in \mathcal{I}$ and for $t \in [-1,1]^{d_{\max}}$, $w_0(t,h,i) = z_{0,h,i}(T_h(t))$. 
 Then, since $\varepsilon_{c,n}\to 0$ for $n$ large enough, from the assumptions of the theorem on the functions $z_{0,h,i}$, the conditions \eqref{eq:analyzing:phi:cond:un} and \eqref{eq:analyzing:phi:cond:deux} of Theorem \ref{thm:analyzing:phi} hold. Then Theorem \ref{thm:analyzing:phi} provides 
$\phi_{c,w_0}(\varepsilon_{c,n})\leqslant \Phi_{c,z_0}(\varepsilon_{c,n})$ and thus
\[
\phi_{c,w_0}(\varepsilon_{c,n})\leqslant n\varepsilon_{c,n}^2.
\]
Let, with $c$ and $K_H$ as in Lemma \ref{lem:Lem3.1}, 
\[
t = 2 c K_H^2,
~ ~ ~
C_c = 1 + 16t
~ ~ ~
\text{and} 
~ ~ ~
C 
=
\frac{1}{4t}.
\]
Hence,
\begin{equation} \label{eq:equal:Cp4}
\frac{C_c}{4t} = C + 4. 
\end{equation}

Let $\overline{\varepsilon}_n = 2 \sqrt{t} \varepsilon_{c,n}$ and $\varepsilon_{n}  = 14 \sqrt{C_c} K_H \varepsilon_{c,n} \geqslant \overline{\varepsilon}_n$.
Let us apply \cite[Theorem 8.9]{ghosal2017fundamentals}, with the constant $C$, the sequences $(\overline{\varepsilon}_n)_{n\in \N}$ and $(\varepsilon_{n})_{n\in \N}$, and the prior $p_{C_{W_c}}$. 
Let us thus check that the assumptions of \cite[ Theorem 8.9]{ghosal2017fundamentals} are satisfied, using Theorem \ref{th:Th2.1}.

Since $n \varepsilon_{c,n}^2 \to \infty$, there exists $N' \in \mathbb{N}^*$ such that, for $n \geqslant N'$, $e^{- C_c n \varepsilon_{c,n}^2} < 1/2$.  
 Hence, the assumptions of Theorem \ref{th:Th2.1} hold, with $C$ there given by $C_c$ and with $(\varepsilon_{c,n})_{n \geqslant N'}$ as considered above. Hence, from this theorem, there exists an integer $n_C \geqslant 0$ and a  sequence of sets $(B_{c,n})_{n \geqslant n_C}$ such that
the conclusions \eqref{eq:Th2.1:1}, \eqref{eq:Th2.1:2}, and \eqref{eq:Th2.1:3} hold (with $C$ replaced by $C_c$).
Then, in  \cite[Theorem 8.9]{ghosal2017fundamentals}, we choose $\mathcal P_{n,1} = \{p_{C_w};\ w\in B_{c,n}\}$.

\qquad $\bullet$ First, let $w_1,\dots,w_N$ be a $\norme{\cdot{}}_{\infty}$-distance covering of $B_{c,n}$ with radius $6 \sqrt{C_c} \varepsilon_{c,n}$ with $N = N(  6 \sqrt{C_c} \varepsilon_{c,n} , B_{c,n} , \|\cdot{}\|_{\infty})$.
Now let $x\in \mathcal P_{n,1}$ (so that $x=p_{C_w}$ for some $w\in B_{c,n}$). 
There is $i \in \{  1 , \dots , N  \}$ such that $\norme{ w - w_i }_{\infty} \leqslant 6 \sqrt{C_c} \varepsilon_{c,n}$. 
Then, by Lemma \ref{lem:Lem3.1}, one has
\begin{align*}
h(p_{C_w},p_{C_{w_i}}) \leqslant K_H\norme{w-w_i}_{\infty}e^{K_H\norme{w-w_i}_{\infty}/2}\leqslant 6K_H \sqrt{C_c} \varepsilon_{c,n} e^{3 K_H \sqrt{C_c} \varepsilon_{c,n}}\leqslant 
7 K_H \sqrt{C_c} \varepsilon_{c,n},
\end{align*}
for $n$ large enough, since $\varepsilon_{c,n} \to 0$. This leads to, for $n$ large enough,
\begin{align*}
 \log N\left(7 \sqrt{C_c} K_H \varepsilon_{c,n} ,\mathcal P_{n,1},h\right) 
&
\leqslant 
\log N\left(6 \sqrt{C_c} \varepsilon_{c,n} , B_{c,n} , \norme{\cdot{}}_{\infty}\right)
\\
&
\leqslant 
n (12 \sqrt{C_c} \varepsilon_{c,n})^2 & \text{from \eqref{eq:Th2.1:1} with $C$ replaced by $C_c$   }\nonumber \\
&\leqslant 
n (14 \sqrt{C_c} K_H \varepsilon_{c,n})^2 & \text{since $K_H>1$}.
\end{align*}
Hence, we obtain, for $n$ large enough,
\begin{equation} \label{eq:Ncov}
\log N\left(\varepsilon_{n} /2 ,\mathcal P_{n,1},h\right) 
\leqslant 
n  \varepsilon_{n}^2.
\end{equation}

\qquad $\bullet$ Second, using \eqref{eq:Th2.1:2} in Theorem \ref{th:Th2.1} (with $C$ replaced by $C_c$) and then \eqref{eq:equal:Cp4},
\begin{align}
\mathbb{P}( p_{C_{W_c}} \not  \in  \mathcal P_{n,1})&=\P(W_c\notin B_{c,n})\leqslant e^{-C_c n\varepsilon_{c,n}^2}
=  e^{-(C+4) n \overline{\varepsilon}_{n}^2}.\label{eq:hyp2}
\end{align}

\qquad $\bullet$ Third, for $p : [-1,1]^d \to (0,\infty)$ with $\int_{[-1,1]^d} p(x) dx = 1$, let, with  $K$ and $V$ defined in \eqref{def:K} and \eqref{def:V} respectively,
\[
V_{2,0}(p_0;p)
=
\int_{[-1,1]^d}
\left(
\log \left(  \frac{p_0(t)}{p(t)}  \right)
-
K(p_0,p)
\right)^2
p_0(t) dt
\leqslant V(p_0,p).
\]
Then, using Lemma \ref{lem:Lem3.1} and \eqref{eq:Th2.1:3} in Theorem \ref{th:Th2.1}, one has, for $n$ large enough, 
\begin{align}
&\mathbb{P}\left(
\ K(p_0,p_{C_{W_c}}) < \overline{\varepsilon}_{n}^2, \, V_{2,0}(p_0;p_{C_{W_c}}) < \overline{\varepsilon}_{n}^2
\right)\nonumber\\
&= \mathbb{P}\left(
\ K(p_0,p_{C_{W_c}}) < 4 t \varepsilon_{c,n}^2, \, V_{2,0}(p_0;p_{C_{W_c}}) < 4 t \varepsilon_{c,n}^2
\right)\nonumber\\
&\geqslant
\mathbb{P}\left(
\ K(p_0,p_{C_{W_c}}) < 4 t \varepsilon_{c,n}^2, \, V(p_0,p_{C_{W_c}}) < 4 t \varepsilon_{c,n}^2 
\right)
\notag\\
&\geqslant \mathbb{P}\Bigl( c K_H^2\norme{W_c-w_0}_{\infty}^2(1+K_H\norme{W_c-w_0}_{\infty}) e^{K_H\norme{W_c-w_0}_{\infty}} < 4 t \varepsilon_{c,n}^2,\nonumber\\
&\qquad \qquad \,  c K_H^2\norme{W_c-w_0}_{\infty}^2 (1+K_H\norme{W_c-w_0}_{\infty})^2 e^{K_H\norme{W_c-w_0}_{\infty}}  < 4 t \varepsilon_{c,n}^2 \Bigr)\nonumber\\
&\geqslant \mathbb{P}\Bigl( 2c K_H^2\norme{W_c-w_0}_{\infty}^2 < 4 t \varepsilon_{c,n}^2\Bigr)\nonumber\\
&= \mathbb{P}\Bigl( \norme{W_c-w_0}_{\infty} < 2\varepsilon_{c,n} \nonumber \Bigr)\\
&\geqslant \exp\{-n\varepsilon_{c,n}^2\} 
= \exp\left\{-n \frac{1}{4t} \overline{\varepsilon}_{n}^2  \right\}
=\exp\left\{-n C \overline{\varepsilon}_{n}^2  \right\}. \label{eq:hyp3}
\end{align}
Hence, we can apply \cite[Theorem 8.9]{ghosal2017fundamentals}, with the sequences $(\overline{\varepsilon}_n)$ and $(\varepsilon_n)$ (starting at a $n$ large enough) and the constant $C$. Indeed, \eqref{eq:Ncov} (resp. \eqref{eq:hyp2} and \eqref{eq:hyp3}) here enables to show Equation (8.5) (resp. Equation (8.6) and Equation (8.4)) in  \cite[Theorem 8.9]{ghosal2017fundamentals}. 
As a consequence, \cite[Theorem 8.9]{ghosal2017fundamentals} implies 
\eqref{eq:posterior:rate:density}.
\end{proof}

\subsection{Proof of Theorem \ref{th:Th3.2}}

With $\mathcal{L}_U$ the law of $U$ and $\mu$ the counting measure on $\{ 0,1 \}$, for any $L \colon [-1,1]^d \times \{ 0 , 1 \} \to \R$, we write $\norme{L}_{2,U}$ for the $L^2$-norm of $L$ with respect to the measure $ \mathcal{L}_U \otimes \mu $.
For $w \colon [-1,1]^d \to \R$, we let $L_w \colon [-1,1]^d \times \{  0,1 \} \to \R$ be defined by 
\begin{equation} \label{eq:p:w:classif}
L_w(u,v) = f_w(u)^v (1 - f_w )(u)^{1-v} \quad \text{for $(u,v) \in  [-1,1]^d \times \{ 0 , 1 \}$}.
\end{equation}
 Notice that, if $f_w$ (defined in \eqref{def:classif}) is a candidate function for $f_0$, then $L_w(U,V)$ is the likelihood function of $(U,V)$ with respect to the measure $ \mathcal{L}_U \otimes \mu $. 
Extend the definition of the Kullback-Liebler divergence $K$ in \eqref{def:K} and that of $V$ in \eqref{def:V}, replacing the input space $[-1,1]^d$ by $[-1,1]^d \times \{0,1 \}$ and Lebesgue measure $dt$ by $ \mathcal{L}_U \otimes \mu $. For $f : [-1,1]^d \to \mathbb{R}$, write also $\norme{f}_{2,U}$ for the $L^2$ norm of $f$ with respect to $\mathcal{L}_U$.

\begin{lemma}\label{lem:Lem3.2:classif}
	For any functions $v$ and $w$ in $\mathbb B_c$, we have 
	\begin{align*}
		\norme{L_{C_v} - L_{C_w}}_{2,U} 
		& = \sqrt{2}  \norme{f_{C_v} - f_{C_w}}_{2,U}
		 \leqslant \sqrt{2} K_H \norme{ \Psi' }_{\infty}  \norme{  v - w }_{\infty}   \\
		K(L_{C_w},L_{C_{w_0}})&\leqslant  K_H^2 
		\left( \norme{ \frac{\Psi'}{\Psi (1- \Psi)} }_{\infty}  \vee  1 \right)   \norme{w-w_0}_{\infty}^2, 
		~ ~ ~ ~ \text{and} \\
		V(L_{C_w},L_{C_{w_0}})&\leqslant  K_H^2 
		\left( \norme{ \frac{\Psi'}{\Psi (1- \Psi)} }_{\infty}  \vee  1 \right)^2   \norme{w-w_0}_{\infty}^2,
	\end{align*}
	using the definitions  \eqref{def:Cpsi}, \eqref{def:classif}, and \eqref{eq:p:w:classif} and where $K_H$ is as in Lemma \ref{lem:norme:infini:composition}.
\end{lemma}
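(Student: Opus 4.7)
The plan is to prove the three estimates in sequence. The first equality reduces to a direct computation on the two-point measure, both Kullback--Leibler and $V$ will be controlled through a single pointwise bound on the log-likelihood ratio, and Lemma \ref{lem:norme:infini:composition} will convert all estimates from $\norme{C_v - C_w}_\infty$ to $\norme{v - w}_\infty$.

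For the first line, I will expand $\norme{L_{C_v} - L_{C_w}}_{2,U}^2$ directly: at fixed $u$, summing over $v' \in \{0,1\}$ yields
\[
|f_{C_v}(u) - f_{C_w}(u)|^2 + |(1 - f_{C_v}(u)) - (1 - f_{C_w}(u))|^2 = 2(f_{C_v}(u) - f_{C_w}(u))^2,
\]
which after integration against $\mathcal{L}_U$ gives the equality. The mean value theorem applied to $\Psi$ combined with Lemma \ref{lem:norme:infini:composition} then gives the inequality via $|f_{C_v}(u) - f_{C_w}(u)| \leqslant \norme{\Psi'}_\infty \norme{C_v - C_w}_\infty \leqslant \norme{\Psi'}_\infty K_H \norme{v - w}_\infty$.

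The pivotal input for both $K$ and $V$ is a pointwise bound on the log-likelihood ratio. Differentiating $c \mapsto v \log \Psi(c) + (1 - v) \log(1 - \Psi(c))$ yields $\Psi'(c)(v - \Psi(c)) / [\Psi(c)(1 - \Psi(c))]$, which is bounded in absolute value by $M_\Psi \defeq \norme{\Psi' / [\Psi(1-\Psi)]}_\infty$, since $|v - \Psi(c)| \leqslant \max(\Psi(c), 1-\Psi(c)) \leqslant 1$. Integrating in $c$ over $[C_{w_0}(u), C_w(u)]$ produces
\[
\bigl|\log L_{C_w}(u,v)/L_{C_{w_0}}(u,v)\bigr| \leqslant M_\Psi |C_w(u) - C_{w_0}(u)|.
\]
Squaring, integrating against $L_{C_w}$, and using the identity $L_{C_w}(u,0) + L_{C_w}(u,1) = 1$ together with Lemma \ref{lem:norme:infini:composition} delivers the $V$ bound.

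The main obstacle is the $K$ bound: the naive $|\cdot|$ estimate only provides linear dependence in $\norme{w - w_0}_\infty$ rather than the required quadratic rate. This will be overcome by exploiting the cancellation $\mathbb{E}_{L_{C_w}(u,\cdot)}[v - \Psi(C_w(u))] = 0$. Writing the log-ratio as $\int_{C_{w_0}(u)}^{C_w(u)} \Psi'(s)(v - \Psi(s)) / [\Psi(s)(1 - \Psi(s))] \, ds$ and passing the conditional expectation under $L_{C_w}$ inside the integral replaces $v$ by $\Psi(C_w(u))$, producing
\[
K(L_{C_w}, L_{C_{w_0}}) = \int \int_{C_{w_0}(u)}^{C_w(u)} \frac{\Psi'(s)(\Psi(C_w(u)) - \Psi(s))}{\Psi(s)(1 - \Psi(s))} \, ds \, d\mathcal{L}_U(u).
\]
Applying the mean value theorem to $\Psi$ in the numerator extracts the missing factor $|C_w(u) - s|$; then bounding $\Psi'/[\Psi(1-\Psi)] \leqslant M_\Psi$ and using $\norme{\Psi'}_\infty \leqslant M_\Psi/4$ (from $\Psi(1-\Psi) \leqslant 1/4$) yields the quadratic bound with a constant of the stated form $M_\Psi \vee 1$. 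A final application of Lemma \ref{lem:norme:infini:composition} concludes the proof.
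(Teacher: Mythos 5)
Your proposal is a genuinely different route from the paper's proof: the paper simply cites \cite[Lemma 3.2]{van2008rates} and notes that the quantity $S(w,w_0)$ appearing there is bounded by $\norme{\Psi'/[\Psi(1-\Psi)]}_\infty \vee 1$, then converts $\norme{C_v - C_w}_\infty$ to $K_H\norme{v-w}_\infty$ via Lemma~\ref{lem:norme:infini:composition}. You instead re-derive the three bounds from scratch. That is a legitimate and more self-contained alternative, and the pieces for the $L^2$ equality and the $V$ bound are correct: the two-point expansion gives the factor $\sqrt 2$, the mean value theorem combined with Lemma~\ref{lem:norme:infini:composition} gives the first inequality, and your pointwise bound $\abs{\log(L_{C_w}/L_{C_{w_0}})} \leqslant M_\Psi \abs{C_w - C_{w_0}}$ together with $\sum_v L_{C_w}(u,v)=1$ yields $V \leqslant M_\Psi^2\norme{C_w-C_{w_0}}_\infty^2$, which is at most $(M_\Psi\vee 1)^2\norme{C_w-C_{w_0}}_\infty^2$. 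The cancellation trick for $K$, replacing $v$ by $\Psi(C_w(u))$ under the inner expectation, is also the right idea to extract the missing quadratic factor.

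However, the constant you actually obtain for $K$ does not match the statement. Following your own steps — bounding $\abs{\Psi'(s)}/[\Psi(s)(1-\Psi(s))]\leqslant M_\Psi$, then $\abs{\Psi(C_w(u))-\Psi(s)}\leqslant \norme{\Psi'}_\infty\abs{C_w(u)-s}\leqslant (M_\Psi/4)\abs{C_w(u)-s}$, and integrating $\abs{C_w(u)-s}$ over $s$ — you arrive at
\[
K(L_{C_w},L_{C_{w_0}})\leqslant \frac{M_\Psi^2}{8}\,\norme{C_w-C_{w_0}}_\infty^2,
\]
and the multiplicative constant $M_\Psi^2/8$ is not dominated by $M_\Psi\vee 1$ in general (indeed $M_\Psi^2/8 > M_\Psi$ whenever $M_\Psi>8$). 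So your chain of estimates establishes a bound of the same \emph{shape} — quadratic in $\norme{w-w_0}_\infty$ with a finite $\Psi$-dependent constant, which is all the downstream argument in Theorem~\ref{th:Th3.2} actually needs — but it does not reproduce the exact constant $\bigl(\norme{\Psi'/[\Psi(1-\Psi)]}_\infty\vee 1\bigr)$ claimed in the lemma. You should either track the constant differently (e.g.\ use the Bernoulli KL bound $K(\mathrm{Ber}(a),\mathrm{Ber}(b))\leqslant (a-b)^2/[b(1-b)]$ and control $\Psi'(\xi)^2/[\Psi(C_{w_0})(1-\Psi(C_{w_0}))]$ as \cite{van2008rates} do with $S(w,w_0)$), or acknowledge that you are proving the lemma with a different $\Psi$-constant.
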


\begin{proof}
	Lemma 3.2 of \cite{van2008rates}, with the observation that $S(w,w_0)$ there is bounded by 
	\[
		\left( \norme{ \frac{\Psi'}{\Psi (1- \Psi)} }_{\infty}  \vee  1 \right)
	\]
	yields 
		\begin{align*}
		\norme{L_{C_v} - L_{C_w}}_{2,U} 
		& = \sqrt{2}  \norme{f_{C_v} - f_{C_w}}_{2,U}
		\leqslant \sqrt{2} \norme{ \Psi' }_{\infty}  \norme{  C_v - C_w }_{2,U}   \\
		K(L_{C_w},L_{C_{w_0}})&\leqslant  
		\left( \norme{ \frac{\Psi'}{\Psi (1- \Psi)} }_{\infty}  \vee  1 \right)   \norme{C_w-C_{w_0}}_{2,U}^2, 
		~ ~ ~ ~ \text{and} \\
		V(L_{C_w},L_{C_{w_0}})&\leqslant  
		\left( \norme{ \frac{\Psi'}{\Psi (1- \Psi)} }_{\infty}  \vee  1 \right)^2   \norme{C_w-C_{w_0}}_{2,U}^2.
	\end{align*}
The proof is concluded using that $\norme{ \cdot }_{2,U} \leqslant \norme{  \cdot }_{\infty}$
and that $\norme{  C_v - C_w }_{\infty} \leqslant K_H \norme{  v - w }_{\infty}$ for $v,w \in \mathbb{B}_c $ from Lemma \ref{lem:norme:infini:composition}. 
\end{proof}

\begin{proof}[Proof of Theorem \ref{th:Th3.2}]
We proceed as in the proof of Theorem \ref{th:Th3.1}, using  \cite[Theorem 8.9]{ghosal2017fundamentals}, with $d$ there given by 	$ d(L_{C_v},L_{C_w}) =  \norme{L_{C_v} - L_{C_w}}_{2,U} / 2$ for $v$ and $w \in \mathbb{B}_c$. Remark that this choice of $d$ indeed satisfies the assumption (8.2) in   \cite{ghosal2017fundamentals}, because of the item (vi) in \cite[Lemma B.1]{ghosal2017fundamentals} (see the paragraph after the assumption (8.2) in \cite{ghosal2017fundamentals}). Remark that instead of using Lemma \ref{lem:Lem3.1} as in the proof of Theorem \ref{th:Th3.1}, we use Lemma \ref{lem:Lem3.2:classif} here. This concludes the proof by also noting that $\norme{L_{C_w} - L_{z_{0,H} \circ \dots \circ z_{0,1}}}_{2,U} 
 = \sqrt{2}  \norme{f_{ C_w}- f_{z_{0,H} \circ \dots \circ z_{0,1}}}_{2,U}
 =\sqrt{2}  \norme{f_{ C_w}- f_{0}}_{2,U}
 $ for $w  \in \mathbb{B}_c$ from Lemma \ref{lem:Lem3.2:classif}. 
\end{proof}

\subsection{Proof of Theorem \ref{theorem:rate:holder}}

Since $Z_{1},\dots,Z_H$ are univariate here, we write for simplicity, for $h = 1 , \dots,H $, $z_{0,h,1} = z_{0,h}$ (when we will apply Theorems \ref{th:Th3.1} and \ref{th:Th3.2} below), $Z_{h,1} = Z_{h}$, and $\mathbb{H}_{h,1} = \mathbb{H}_{h}$.  

For $\ell \in \N$ and for $(a,b)\in \R^2$ such that $a<b$, let $\mathcal{H}^{\ell+1}([a,b])$ be the Sobolev space  of $\ell$-times continuously differentiable functions $h$ on $[a,b]$ with $\ell$-th derivative function $h^{(\ell)}$ that is the integral of a square integrable function $h^{(\ell+1)}$ on $[a,b]$: $h^{(\ell)}(x) = \int_{a}^x h^{(\ell+1)} (t) dt $, for $ x \in [a, b]$.

For $h=1,\dots,H$, as shown in \cite[Section 10]{van2008reproducing} and with a linear change of input variables (see \cite[Lemma 7.1]{van2008reproducing}), the RKHS $\mathbb H_{h}$ of $Z_{h}$ is 
$\mathcal{H}^{N_h+1}([-1,1])$ with squared RKHS-norm, for $g \in \mathcal{H}^{N_h+1}([-1,1])$, 
\[
\norme{g}^2_{\mathbb H_{h}} = 
\int_{-1}^1 (g^{(N_h+1)}(t))^2 dt
+
\sum_{i=0}^{N_h}
g^{(i)}(-1)^2.
\]
Here $g^{(0)} = g$ and $g^{(1)} , \dots , g^{(N_h)}$ are the derivatives of $g$.

First consider the setting of Theorem \ref{th:Th3.1}.
Define $z_{0,1} , \dots , z_{0,h}$ by 
$
z_{0,1}
=
	\log(p_0) 
	/
	2
\norme{\log(p_0)}_{\infty}$,
$z_{0,h} = \mathrm{id}$ for $h = 2 , \dots , H-1$, and $z_{0,H} = 2 \norme{\log(p_0)}_{\infty} \mathrm{id}$. Here $\mathrm{id}$ is the identity function. 
Then $z_{0,H} \circ \dots \circ z_{0,1}= \log(p_0)$.
Furthermore, consider that the constants $K_1 , \dots , K_H$ are selected large enough such that the functions 
$z_{0,1} , \dots , z_{0,H}$ satisfy the conditions of Theorem \ref{th:Th3.1}. 
Recall $K_{\min}$ defined in \eqref{eq:Kmin}.

\begin{lemma} \label{lemma:approximation:error}
We have, for $\varepsilon \in (0,1]$,
\[
\inf_{
\substack{
g \in \mathbb H_{1} \\
\norme{ g - z_{0,1} } < \varepsilon
}
}
\norme{g}_{\mathbb H_{1}}^2
\leqslant 
C_{\sup}
\varepsilon^{-2( N_1 - \beta +1)/\beta}
\]
and, for $h=2 , \dots , H$, 
\[
\underset{
\substack{
g \in \mathbb H_{h} \\
\norme{ g - z_{0,h} }_{\infty} < \frac{\varepsilon}{2} \\
\norme{ \partial g / \partial x_1 - \partial z_{0,h} / \partial x_1 }_{\infty} < \frac{K_{\min}}{4}
} 
}{\inf}
\norme{g}_{\mathbb H_{h}}^2
\leqslant 
C_{\sup}.
\]
\end{lemma}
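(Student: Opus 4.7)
The plan is to treat the two inequalities separately, since the structure of $z_{0,1}$ and that of $z_{0,h}$ for $h\ge 2$ are quite different. Recall from the paragraph preceding the lemma that $\mathbb{H}_h$ identifies with the Sobolev space $\mathcal{H}^{N_h+1}([-1,1])$, with squared norm $\int_{-1}^1(g^{(N_h+1)})^2\,dt+\sum_{i=0}^{N_h}g^{(i)}(-1)^2$. This is the working representation I will use throughout.

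For the first inequality, first I would verify that $z_{0,1}=\log(p_0)/(2\|\log p_0\|_\infty)$ inherits the $\beta$-H\"older regularity of $p_0$, using strict positivity of $p_0$ and smoothness of $\log$ on $(0,\infty)$, so that $z_{0,1}\in\mathcal{F}^\beta([-1,1],\R)$. Then the argument is a standard mollification/approximation at bandwidth $\sigma$: after extending $z_{0,1}$ from $[-1,1]$ to a compactly supported H\"older function on $\R$ (by reflection/Whitney-type extension), I convolve with a smooth kernel of bandwidth $\sigma$ to produce $g_\sigma \in C^\infty$. Classical estimates give $\|g_\sigma - z_{0,1}\|_\infty \leqslant C\sigma^\beta$ and $\|g_\sigma^{(N_1+1)}\|_{L^2}^2 \leqslant C\sigma^{-2(N_1+1-\beta)}$, while the boundary terms $\sum_{i=0}^{N_1} g_\sigma^{(i)}(-1)^2$ remain bounded (uniformly in $\sigma$ for $\sigma$ small, or at worst by the same power of $\sigma$). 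Choosing $\sigma\asymp \varepsilon^{1/\beta}$ enforces $\|g_\sigma - z_{0,1}\|_\infty < \varepsilon$ and yields $\|g_\sigma\|_{\mathbb{H}_1}^2 \leqslant C_{\sup}\varepsilon^{-2(N_1-\beta+1)/\beta}$, which is the desired bound. This is precisely the argument used in \cite{van2008rates} for integrated Brownian motion, which I would invoke.

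For the second inequality, the key observation is that for $h=2,\dots,H-1$ the function $z_{0,h}$ is the identity, and $z_{0,H}=2\|\log p_0\|_\infty\,\mathrm{id}$; in either case $z_{0,h}$ is a polynomial of degree $1$, hence in $C^\infty([-1,1])\subset\mathcal{H}^{N_h+1}([-1,1])=\mathbb{H}_h$. Therefore I simply take $g=z_{0,h}$ in the infimum: both sup-norm constraints become zero and are trivially satisfied (since $\varepsilon,K_{\min}/4>0$), and the RKHS norm $\|z_{0,h}\|_{\mathbb{H}_h}^2$ reduces, since $z_{0,h}^{(N_h+1)}\equiv 0$ (as $N_h\geqslant\beta>0$, so $N_h+1\geqslant 2$), to the boundary sum $\sum_{i=0}^{N_h} z_{0,h}^{(i)}(-1)^2$, which is a finite constant independent of $\varepsilon$.

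The main obstacle is the quantitative approximation step in the first inequality: controlling the $L^2$ norm of the $(N_1+1)$-th derivative of the mollified approximant together with the boundary values appearing in the RKHS norm formula, while preserving the $\sigma^\beta$ uniform approximation rate. The cleanest route is to reuse the mollification lemma underlying \cite[Theorem 4.1]{van2008rates}; the only new element here is keeping track of the scaling constants, which are absorbed into $C_{\sup}$.
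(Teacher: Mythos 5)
Your proof is correct and follows the same route as the paper: for the first bound, invoke (or unroll) the mollification argument from the proof of \cite[Theorem 4.1]{van2008rates} applied to $z_{0,1}\in\mathcal{F}^{\beta}([-1,1],\R)$ using the identification $\mathbb H_1=\mathcal{H}^{N_1+1}([-1,1])$, and for the second bound, observe that $z_{0,h}$ is a linear function hence lies in $\mathbb H_h$ so that taking $g=z_{0,h}$ gives an $\varepsilon$-independent bound. The extra details you supply on the kernel estimates and boundary terms are consistent with the cited argument.
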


\begin{proof}[Proof of Lemma \ref{lemma:approximation:error}]

The function $z_{0,1} = \log( p_0 ) / 2 \norme{\log( p_0 )}_{\infty}$ is a function in $\mathcal{F}^{\beta}([-1,1],\mathbb{R})$ that does not depend on $\varepsilon$. Hence, because $\mathbb H_{1} = \mathcal{H}^{N_1+1}([-1,1])$ as seen above, from the proof of Theorem 4.1 in \cite{van2008rates}, we obtain the first inequality of the lemma.

To prove the second inequality, remark that $z_{0,h}$ is a linear function from $[-1,1]$ to $\mathbb{R}$ and thus it belongs to $\mathbb H_{h}$. Hence the infimum in the second inequality of the lemma is smaller than $\norme{ z_{0,h}  }^2_{\mathbb H_{h}} < \infty$.
\end{proof}

\begin{lemma} \label{lemma:concentration:term}

We have, for $\varepsilon \in (0,1]$ and for $h=1 , \dots, H$, 
\[
- \log \mathbb{P}
\left(
\norme{ Z_{h}
}_{\infty}
\leqslant  \varepsilon 
\right)
\leqslant 
C_{\sup}
\varepsilon^{-1/(N_{h} + 1/2)}.
\]
\end{lemma}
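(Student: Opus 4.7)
The plan is to decompose $Z_h$ into two independent pieces and handle them separately. Restricting the parameter to $[-1,1]$, we may write
\[
Z_h(t) = (I^{N_h} B_h)(t+1) + P_h(t+1),
\qquad t \in [-1,1],
\]
where $P_h(s) = \sum_{\ell=0}^{N_h} X_{h,\ell} s^\ell/\ell!$ is a random polynomial of degree at most $N_h$ whose coefficients are the i.i.d.\ standard Gaussians $X_{h,0},\dots,X_{h,N_h}$, and $I^{N_h}B_h$ is the $N_h$-times iterated integral of the Brownian motion $B_h$, taken on $[0,2]$. Since $B_h$ and $(X_{h,0},\dots,X_{h,N_h})$ are independent, the two summands above are independent processes.

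The inclusion
\[
\{\|I^{N_h}B_h\|_{\infty,[0,2]} \leqslant \varepsilon/2\}
\cap \{\|P_h\|_{\infty,[0,2]}\leqslant \varepsilon/2\}
\subset
\{\|Z_h\|_{\infty,[-1,1]} \leqslant \varepsilon\}
\]
combined with independence yields
\[
-\log\mathbb{P}(\|Z_h\|_{\infty}\leqslant \varepsilon)
\leqslant
-\log \mathbb{P}\bigl(\|I^{N_h}B_h\|_{\infty,[0,2]}\leqslant \varepsilon/2\bigr)
- \log \mathbb{P}\bigl(\|P_h\|_{\infty,[0,2]}\leqslant \varepsilon/2\bigr).
\]
The first term is controlled by the classical small ball estimate for integrated Brownian motion (see the proof of Theorem 4.1 in \cite{van2008rates}, or the references therein to Chen--Li type bounds), which gives
\[
-\log \mathbb{P}\bigl(\|I^{N_h}B_h\|_{\infty,[0,2]}\leqslant \varepsilon/2\bigr)
\leqslant C_{\sup}\,\varepsilon^{-1/(N_h+1/2)}.
\]

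For the polynomial term, $P_h$ restricted to $[0,2]$ is a Gaussian random element in the finite dimensional subspace of polynomials of degree at most $N_h$, and the uniform norm on this subspace is equivalent to (say) the Euclidean norm of the coefficient vector. Hence there is a constant $c>0$ such that $\mathbb{P}(\|P_h\|_{\infty,[0,2]}\leqslant \varepsilon/2)\geqslant c\,\varepsilon^{N_h+1}$ for $\varepsilon \in (0,1]$, so that
\[
-\log\mathbb{P}(\|P_h\|_{\infty,[0,2]}\leqslant \varepsilon/2)
\leqslant C_{\sup}\bigl(1+\log(1/\varepsilon)\bigr),
\]
which is negligible compared to $\varepsilon^{-1/(N_h+1/2)}$ as $\varepsilon \to 0$. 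Combining the two bounds gives the conclusion. The only delicate point is invoking the correct small ball rate for the $N_h$-times integrated Brownian motion on a compact interval, which is already available in the literature used in \cite{van2008rates}; the polynomial correction is routine.
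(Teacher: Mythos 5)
Your argument is correct. It is in substance the same argument the paper invokes: the paper's proof rewrites the small-ball probability as one for the process $(I^{N_h}B_h)(t)+\sum_{\ell}X_{h,\ell}t^\ell/\ell!$ on $[0,2]$ and then cites the proof of Theorem~4.1 in \cite{van2008rates} (which in turn rests on the Chen small-deviation estimate for iterated integrals of Brownian motion, \cite[Theorem~1.3]{chen2003quadratic}). What you have done is unpack the step hidden behind that citation: split off the independent Gaussian polynomial, bound the integrated Brownian motion by the Chen--Li type estimate $-\log\P(\|I^{N_h}B_h\|_{\infty,[0,2]}\leqslant \varepsilon)\leqslant C\varepsilon^{-1/(N_h+1/2)}$, and observe that the finite-dimensional polynomial part contributes only a logarithmic term. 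All the individual steps are sound: the triangle-inequality inclusion plus independence gives the sub-additivity of $-\log\P$, the equivalence of norms on the $(N_h+1)$-dimensional space of polynomials gives $\P(\|P_h\|_{\infty,[0,2]}\leqslant \varepsilon/2)\gtrsim\varepsilon^{N_h+1}$, and $\log(1/\varepsilon)$ is indeed dominated by $\varepsilon^{-1/(N_h+1/2)}$ uniformly on $(0,1]$. So your proof is a more self-contained version of the paper's one-line citation; the trade-off is that you explicitly rely on the decomposition, whereas the paper treats the combined statement (IBM plus polynomial) as already proved in \cite{van2008rates}.
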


\begin{proof}[Proof of Lemma \ref{lemma:concentration:term}]
	We have, for $h=1 , \dots ,H$,
	\[
	- \log \mathbb{P}
	\left(
	\norme{ 
		Z_h
	}_{\infty} \leqslant  \varepsilon
	\right)
	=
	- \log \mathbb{P}
	\left(
	\sup_{t \in [ 0 ,2]}
	\left|
	I^{N_h} B_h( t  ) + \sum_{\ell=0}^{N_h} X_{h,\ell} \frac{t^\ell}{\ell!}
	\right|
	\leqslant  \varepsilon
	\right).
	\]
	The right-hand side above is bounded by $C_{\sup}
	\varepsilon^{-1/(N_{h} + 1/2)}$ as stated in the proof of Theorem 4.1 in \cite{van2008rates} (this follows in particular from Theorem 1.3 in \cite{chen2003quadratic}). 
\end{proof}

Recall $\Phi_{c,z_0}$ from \eqref{def:Phi_w0}. 
From Lemmas \ref{lemma:approximation:error}, \ref{lemma:concentration:term}, and \ref{lemma:proba:small:ball:non-zero} and from $\beta \leqslant N_{1}+1/2$ and $\beta \leqslant N_{h}$, for $h=2 , \dots , H$,
we obtain,  for $\varepsilon \in (0,1]$,
\[
\Phi_{c,z_0}(\varepsilon)  
\leqslant 
C_{\sup}
\varepsilon^{-2( N_{1} - \beta +1)/\beta }.
\]
Hence, without loss of generality, one may assume that $C_{\sup}>1$ and thus we have, for $n$ large enough,
\[
\Phi_{c,z_0} \left( C_{\sup} n^{ -\beta/(2 N_{1} +2)}  \right) 
\leqslant 
n
C_{\sup}^2 n^{ - 2\beta/( 2 N_1+2)}. 
\]
Hence, the conclusion of Theorem \ref{th:Th3.1} holds with $\varepsilon_{c,n}=C_{\sup} n^{ -\beta/(2 N_1 +2)}$. 
The proof for the conclusion of Theorem \ref{th:Th3.2} is the same.

\subsection{Proof of Theorem \ref{theorem:rate:riemann:liouville}}
The proof is similar to the proof of Theorem \ref{theorem:rate:holder}
As done previously, we write for simplicity, for $h = 1 , \dots,H $, $z_{0,h,1} = z_{0,h}$, $Z_{h,1} = Z_{h}$, and $\mathbb{H}_{h,1} = \mathbb{H}_{h}$.  

For any $\alpha>0$ and for any measurable function $g$ on $[-1,1]$, we define the (left-sided) Riemann-Liouville integral of $g$ of order $\alpha$ (if it exists) by, for $t \in [-1,1]$,
\[
(I_{0+}^{\alpha} g) (t)=\frac{1}{\Gamma(\alpha)}\int_{-1}^t (t-s)^{\alpha-1} g(s)ds
\] 
where $\Gamma$ stands for the standard Gamma function.
As shown in \cite[Theorem 4.2]{van2008rates} and with a linear change of input variables (see \cite[Lemma 7.1]{van2008reproducing}), using the fact that the RKHS of a sum of independent Gaussian processes is the sum of their RKHS's (see \cite[Theorem 5]{berlinet2004reproducing}), for $h=1,\dots,H$, the RKHS $\mathbb H_h$ of $Z_{h}$ is the set of functions of the form 
\begin{equation} \label{eq:RKHS:riemann:liouville}
I_{0+}^{\alpha_h+1/2}(g)
+
P_{\underline{\alpha}_h+1},
\end{equation}
where $g $ is square integrable on $[-1,1]$
and where $P_{\underline{\alpha}_h+1}$ is a polynomial of degree $\underline{\alpha}_h+1$.

\medskip

First consider the setting of Theorem \ref{th:Th3.1}. 
Define $z_{0,1} , \dots , z_{0,H}$ as in the proof of Theorem \ref{theorem:rate:holder} with in particular $z_{0,H} \circ \dots \circ z_{0,1}= \log(p_0)$. Also as in the proof of Theorem \ref{theorem:rate:holder}, let the constants $K_1 , \dots , K_H$ be selected large enough such that the functions $z_{0,1} , \dots , z_{0,H}$ satisfy the conditions of Theorem \ref{th:Th3.1}.

\begin{lemma}\label{lemma:h:one:riemann:liouville}
	We have, for $\varepsilon \in (0,1]$,
	\[
	\frac{3}{2} 	\inf_{
	\substack{
		g \in \mathbb H_{1} \\
		\norme{  g - z_{0,1} }_{\infty} < \varepsilon
	}
} 
\norme{  g }_{\mathbb H_{1}}^2
- 2 \log 
\mathbb{P}
\left(
\norme{Z_{1}}_{\infty} < \varepsilon 
\right)
\leqslant 
C_{\sup} \varepsilon^{-1 / \alpha_1}
\]
%
%
	and, for $h = 2 , \dots , H$,
	\begin{align*} 
		\frac{3}{2}
\underset{
	\substack{
		g \in \mathbb H_{h} \\
		\norme{ g - z_{0,h} }_{\infty} < \frac{\varepsilon}{2} \\
		\norme{ \partial g / \partial x_1 - \partial z_{0,h} / \partial x_1 }_{\infty} < \frac{K_{\min}}{4}, \\
	} 
}{\inf}
&
\norme{g}_{\mathbb H_{h}}^2
- 2 \log \mathbb{P}
\Bigl(
\norme{ 
	Z_{h}
}_{\infty} \leqslant  \frac{\varepsilon}{2}
\Bigr)
\\
		&
		- 2 
\log \mathbb{P}
\Bigl(
\norme{ 
	\partial Z_{h} / \partial x_1
}_{\infty}
\leqslant  \frac{K_{\min}}{4}
\Bigr)  
\leqslant C_{\sup} \varepsilon^{-1/ \alpha_h }.
	\end{align*}
\end{lemma}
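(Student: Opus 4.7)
The plan is to treat the two inequalities separately. The first one is essentially a restatement of the classical concentration function bound for Riemann-Liouville processes that underpins \cite[Theorem 4.2]{van2008rates}. The second one exploits the fact that for $h\geqslant 2$, the fixed function $z_{0,h}$ is linear and therefore lies in the RKHS $\mathbb H_h$, which makes the infimum a constant independent of $\varepsilon$; the remaining two log-probability terms are then handled by the small-ball estimate for Riemann-Liouville processes and by Lemma \ref{lemma:proba:small:ball:non-zero}, respectively.

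For the first inequality, I would start by noting that $z_{0,1}=\log(p_0)/(2\norme{\log p_0}_\infty)$ belongs to $\mathcal F^{\alpha_1}([-1,1],\R)$, since $p_0\in\mathcal F^{\alpha_1}([-1,1],\R)$ is strictly positive. Up to the numerical factors $3/2$ and $2$, the left-hand side is bounded by twice the classical concentration function
\[
\phi_{z_{0,1}}(\varepsilon)=\inf_{g\in\mathbb H_1,\,\norme{g-z_{0,1}}_\infty<\varepsilon}\norme{g}_{\mathbb H_1}^2-\log\P(\norme{Z_1}_\infty<\varepsilon).
\]
The proof of \cite[Theorem 4.2]{van2008rates} combines two ingredients: an approximation bound relying on the explicit Riemann-Liouville structure of the RKHS given in \eqref{eq:RKHS:riemann:liouville} to approximate any $z_{0,1}\in\mathcal F^{\alpha_1}$ at the correct rate, and the small-ball estimate $-\log\P(\norme{Z_1}_\infty<\varepsilon)\leqslant C_{\sup}\,\varepsilon^{-1/\alpha_1}$ (obtained for instance from Theorem 1.3 in \cite{chen2003quadratic}). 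Together these yield $\phi_{z_{0,1}}(\varepsilon)\leqslant C_{\sup}\,\varepsilon^{-1/\alpha_1}$, which gives the first inequality after absorbing the constants.

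For the second inequality and $h\in\{2,\dots,H\}$, $z_{0,h}$ is either the identity or a scalar multiple of it, hence a polynomial of degree one. Because $\alpha_h>1$ implies $\underline{\alpha}_h+1\geqslant 2$, the representation \eqref{eq:RKHS:riemann:liouville} shows that $z_{0,h}\in\mathbb H_h$. Choosing $g=z_{0,h}$ in the infimum satisfies both the uniform and the derivative constraints trivially, so the infimum is bounded by $\norme{z_{0,h}}_{\mathbb H_h}^2$, a constant independent of $\varepsilon$. The value small-ball term obeys the same Riemann-Liouville estimate as above, giving $-\log\P(\norme{Z_h}_\infty\leqslant \varepsilon/2)\leqslant C_{\sup}\,\varepsilon^{-1/\alpha_h}$. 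For the derivative term, the assumption $\alpha_h>1$ ensures that $\partial Z_h/\partial x_1$ is a well-defined centered continuous Gaussian process on $[-1,1]$, so Lemma \ref{lemma:proba:small:ball:non-zero} applied with the fixed positive threshold $K_{\min}/4$ shows that $-\log\P(\norme{\partial Z_h/\partial x_1}_\infty\leqslant K_{\min}/4)$ is a finite constant not depending on $\varepsilon$. Summing the three contributions gives the bound $C_{\sup}\,\varepsilon^{-1/\alpha_h}$ valid on $(0,1]$.

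The only mildly delicate step is reassembling the concentration-function argument behind \cite[Theorem 4.2]{van2008rates} for the first inequality in the present normalization; the second inequality, which is specific to the deep setting, is then straightforward once one observes that polynomials of degree one lie in the RKHS whenever $\alpha_h>1$ and that the derivative threshold $K_{\min}/4$ does not shrink with $\varepsilon$.
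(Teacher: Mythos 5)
Your proposal is correct and follows essentially the same route as the paper: invoke the concentration-function bound for the Riemann–Liouville process from van der Vaart and van Zanten for the first inequality (the paper cites their Theorem~4.3, and your description of combining the RKHS representation \eqref{eq:RKHS:riemann:liouville} with the small-ball estimate is precisely what that theorem's proof does), and for $h\geqslant 2$ observe that $z_{0,h}$ is a degree-one polynomial lying in $\mathbb H_h$ because $\underline{\alpha}_h+1\geqslant 2$, so the infimum is an $\varepsilon$-free constant, while the value small-ball term obeys the same Riemann–Liouville rate and the derivative term is an $\varepsilon$-free finite constant by Lemma~\ref{lemma:proba:small:ball:non-zero}. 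The only cosmetic differences from the paper are the theorem number you cite (4.2 rather than 4.3 of \cite{van2008rates}) and your making explicit the appeal to Lemma~\ref{lemma:proba:small:ball:non-zero} for finiteness of the derivative term, which the paper leaves implicit.
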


\begin{proof}[Proof of Lemma \ref{lemma:h:one:riemann:liouville}]
The first part of the lemma comes with a straightforward application of \cite[Theorem 4.3]{van2008rates} (note that the arguments of its proof would be the same if the support $[0,1]$ there was replaced by $[-1,1]$).

Then the second infimum in the statement of the lemma is bounded because $z_{0,h}$ (a multiple of identity) belongs to $\mathbb H_{h}$ since $\mathbb H_{h}$ contains the polynomials of degrees up to $\underline{\alpha}_h+1$. The quantity $- 2 \log \mathbb{P}
	\Bigl(
	\norme{ 
		Z_{h}
	}_{\infty} \leqslant  \varepsilon/2
	\Bigr)$ is smaller than the term displayed in Lemma \ref{lemma:h:one:riemann:liouville} (with $Z_1$ there replaced by $Z_h$) and thus this quantity is bounded by $C_{\sup} \varepsilon^{-1/\alpha_h}$ from this lemma. Finally, the quantity $-2 \log \mathbb{P}
	\Bigl(
	\norme{ 
		\partial Z_{h} / \partial x_j
	}_{\infty}
	\leqslant K_{\min}/4
	\Bigr)$ does not depend on $\varepsilon$.
\end{proof}

Hence, we obtain from $\alpha_h \geqslant \alpha_1$, for $h=1 , \dots , H$, that $\Phi_{c,z_0}(\varepsilon) \leqslant C_{\sup} \varepsilon^{-1/\alpha_1}$. Then, as in the proof of Theorem \ref{theorem:rate:holder} and without loss of generality, one may assume that $C_{\sup}>1$ and thus we have, for $n$ large enough,
\[
\Phi_{c,z_0} \left( C_{\sup} n^{ -\alpha_1/(2 \alpha_1 +1)}  \right) 
\leqslant
n
C_{\sup}^2 n^{ - 2 \alpha_1/(2 \alpha_1 +1)}. 
\]
Hence, the conclusion of Theorem \ref{th:Th3.1} holds with $\varepsilon_{c,n}=C_{\sup} n^{ -\alpha_1/(2 \alpha_1 +1)} $. 
The proof for the conclusion of Theorem \ref{th:Th3.2} is the same.

\subsection{Proof of Theorem \ref{theorem:rate:matern}}

The proof is similar to the proof of Theorems \ref{theorem:rate:holder} and \ref{theorem:rate:riemann:liouville}. 
By \cite[Lemma 4.1]{van2009adaptive} (as in \cite[(11)]{van2011information}), the 
RKHS $\mathbb H_{h,i}$ of $Z_{h,i}$ is the set of restrictions to $[-1,1]^{d_h}$ of 
 all real parts of the functions $h : \R^{d_h} \to \R$ of the form
\begin{equation} \label{eq:condition:FT:RKHS:matern}
h(t)=\int_{\mathbb{R}^d} e^{i\lambda^\top t}\Psi(\lambda)m_{h,i}(\lambda)d\lambda
\end{equation}
for $\Psi : \mathbb{R}^d \to \mathbb{R}$ such that $\int_{\mathbb{R}^d} \Psi^2(\lambda) m_{h,i} (\lambda) d \lambda < \infty $, with $m_{h,i}$ as in \eqref{eq:Mhi}.

First consider the setting of Theorem \ref{th:Th3.1}. 
Define $z_{0,h,i}$ for $(h,i) \in \mathcal{I}$ as follows. Without loss of generality (up to swapping $Z_{1,1} , \dots , Z_{1,d_2}$), we may assume that $\alpha_{1,1} = \alpha_{1,\min}$. Then we let $z_{0,1,1} = \log(p_0) / 2 \norme{\log(p_0)}_{\infty}, z_{0,1,2} = \dots = z_{0,1,d_2} = 0$. For $h = 2 , \dots , H-1$, we let $z_{0,h,1} ( u_1 , \dots , u_{d_h}) = u_1$ for $(u_1 , \dots , u_{d_h}) \in [-1,1]^{d_h}$ and we let $z_{0,h,2} =  \dots = z_{0,h,d_{h+1}} = 0$. Finally, we let $z_{0,H,1} = ( u_1 , \dots , u_{d_H}) = 2 u_1  \norme{\log(p_0)}_{\infty}$ for $(u_1 , \dots , u_{d_H}) \in [-1,1]^{d_H}$. Then $z_{0,H} \circ \dots \circ z_{0,1}= \log(p_0)$.
Furthermore, consider that the constants $K_{h,i}$ for $(h,i) \in \mathcal{I}$ are selected large enough such that the functions $z_{0,h,i}$ for $(h,i) \in \mathcal{I}$ satisfy the conditions of Theorem \ref{th:Th3.1}.

\begin{lemma}\label{lemma:h:one:matern}
	We have, for $\varepsilon \in (0,1]$, 
	\[
	\frac{3}{2} 	\inf_{
		\substack{
			g \in \mathbb H_{1,1} \\
			\norme{  g - z_{0,1,1} }_{\infty} < \varepsilon
		}
	} 
	\norme{  g }_{\mathbb H_{1,1}}^2
	- 2 \log 
	\mathbb{P}
	\left(
	\norme{Z_{1,1}}_{\infty} < \varepsilon 
	\right)
	\leqslant 
	C_{\sup} \varepsilon^{-d / \alpha_{1,1}}
	+
	C_{\sup} \varepsilon^{
		-
		\frac{2 \alpha_{1,1} + d - 2 \beta}{\beta}
	}
	\]
%
%
	and,  for $(h,i) \in \mathcal{I} \backslash \{ (1,1) \}$, if $h=1$,
	\begin{align*} 
		\frac{3}{2}
		\underset{
			\substack{
				g \in \mathbb H_{h,i} \\
				\norme{ g -z_{0,h,i} }_{\infty} < \frac{\varepsilon}{2} 
			} 
		}{\inf} &
		\norme{g}_{\mathbb H_{h,i}}^2
		- 2 \log \mathbb{P}
		\Bigl(
		\norme{ 
			Z_{h,i}
		}_{\infty} \leqslant  \frac{\varepsilon}{2}
		\Bigr)
		\leqslant C_{\sup} \varepsilon^{-d/ \alpha_{h,i} }
	\end{align*}
	and for $h=2,\dots,H$,	
	\begin{align*} 
		\frac{3}{2}
		\underset{
			\substack{
				g \in \mathbb H_{h,i} \\
				\norme{ g -z_{0,h,i} }_{\infty} < \frac{\varepsilon}{2} \\
				\norme{ \partial g / \partial x_j - \partial z_{0,h,i} / \partial x_j }_{\infty} < \frac{K_{\min}}{4}, \\
				j = 1 , \dots , d_h
			} 
		}{\inf} &
		\norme{g}_{\mathbb H_{h,i}}^2
		- 2 \log \mathbb{P}
		\Bigl(
		\norme{ 
			Z_{h,i}
		}_{\infty} \leqslant  \frac{\varepsilon}{2}
		\Bigr)
		\\
		&
		- 2 
		\log \mathbb{P}
		\Bigl(
		\norme{ 
			\partial Z_{h,i} / \partial x_j
		}_{\infty}
		\leqslant  \frac{K_{\min}}{4}
		\Bigr) 
		\leqslant C_{\sup} \varepsilon^{-d/ \alpha_{h,i} }.
	\end{align*}
\end{lemma}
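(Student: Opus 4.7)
The plan is to bound each of the three expressions separately by combining (i) a uniform small-ball estimate $-\log \mathbb{P}(\norme{Z_{h,i}}_\infty \leqslant \varepsilon) \leqslant C_{\sup} \varepsilon^{-d/\alpha_{h,i}}$ for Matérn processes, which follows from the metric-entropy computation of \cite[Section 3.1]{van2011information} adapted to the cube $[-1,1]^{d_h}$, together with (ii) an appropriate choice of RKHS element $g$ realizing the infimum up to constants. The derivative small-ball terms $-\log \mathbb{P}(\norme{\partial Z_{h,i}/\partial x_j}_\infty \leqslant K_{\min}/4)$ appearing in the third assertion are finite by Lemma \ref{lemma:proba:small:ball:non-zero} applied to the continuous centered process $\partial Z_{h,i}/\partial x_j$, and are independent of $\varepsilon$.

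For the first assertion, $z_{0,1,1}$ is a constant multiple of $\log(p_0)$ and inherits membership in $\mathcal{F}^\beta([-1,1]^d,\mathbb{R}) \cap \mathcal{H}^\beta([-1,1]^d,\mathbb{R})$. I would follow the proof of \cite[Theorem 5]{van2011information}: extend $z_{0,1,1}$ to a compactly supported function on $\mathbb{R}^d$ preserving the Sobolev norm $\norme{\cdot}_\beta$ via a standard Sobolev extension operator, convolve with a mollifier at scale $\sigma(\varepsilon)$, and use the Fourier-side characterization \eqref{eq:condition:FT:RKHS:matern} of $\mathbb{H}_{1,1}$ to show that the mollified function $g$ lies in $\mathbb{H}_{1,1}$ with $\norme{g - z_{0,1,1}}_\infty \lesssim \varepsilon$ and $\norme{g}_{\mathbb{H}_{1,1}}^2 \lesssim \varepsilon^{-(2\alpha_{1,1}+d-2\beta)/\beta}$ after optimizing $\sigma$. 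Adding the small-ball contribution $C_{\sup}\varepsilon^{-d/\alpha_{1,1}}$ yields the claimed bound.

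For the remaining assertions, each target $z_{0,h,i}$ with $(h,i)\neq(1,1)$ is either identically zero or the polynomial $(u_1,\dots,u_{d_h})\mapsto u_1$ (up to a constant factor); in either case it belongs to $\mathbb{H}_{h,i}$, since any smooth compactly supported extension to $\mathbb{R}^{d_h}$ has Fourier transform decaying faster than any polynomial and thus satisfies the square-integrability condition \eqref{eq:condition:FT:RKHS:matern}. Hence I can take $g = z_{0,h,i}$ in the infimum, making both the sup-norm constraint $\norme{g - z_{0,h,i}}_\infty = 0 < \varepsilon/2$ and, where applicable, the derivative constraint $\norme{\partial g/\partial x_j - \partial z_{0,h,i}/\partial x_j}_\infty = 0 < K_{\min}/4$ trivially satisfied, with infimum equal to the $\varepsilon$-independent constant $\norme{z_{0,h,i}}_{\mathbb{H}_{h,i}}^2$. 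The dominant term is then the function small-ball probability, which gives the stated bound $C_{\sup}\varepsilon^{-d/\alpha_{h,i}}$.

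The hard part will be the quantitative mollifier/extension construction in the first assertion, where one must simultaneously (a) preserve the Sobolev regularity under extension to $\mathbb{R}^d$, (b) control the $L^\infty$-approximation rate in terms of the mollifier scale and the Hölder smoothness $\beta$, and (c) bound the weighted Fourier integral defining $\norme{g}_{\mathbb{H}_{1,1}}^2$ in terms of the same mollifier scale. Balancing (b) against (c) produces the characteristic exponent $(2\alpha_{1,1}+d-2\beta)/\beta$ appearing in the final bound; this is the only place in the proof where the full Hölder–Sobolev regularity of $p_0$ is used.
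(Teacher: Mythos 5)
Your proposal is correct and follows essentially the same route as the paper: the paper invokes \cite[Lemmas 3 and 4]{van2011information} for the first bound (Lemma 3 being the Matérn small-ball estimate and Lemma 4 being precisely the mollifier/Sobolev-extension approximation error you describe), and for the remaining two bounds it likewise observes that $z_{0,h,i}$ (zero or a linear form) extends to an infinitely differentiable compactly supported function on $\R^{d_h}$ satisfying \eqref{eq:condition:FT:RKHS:matern}, so the infimum is an $\varepsilon$-independent constant, with the derivative small-ball term finite and constant. The only cosmetic difference is that you unpack the proof of \cite[Theorem 5]{van2011information} where the paper simply cites its two underlying lemmas.
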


\begin{proof}[Proof of Lemma \ref{lemma:h:one:matern}]
A straightforward application of \cite[Lemmas 3 and 4]{van2011information} leads to the first part of the lemma.

Then the two last infima in the statement of the lemma are bounded because $z_{0,h,i}$ (either zero or a linear function) belongs to $\mathbb H_{h,i}$. Indeed, $z_{0,h,i}$, defined on $[-1,1]^{d_h}$ can be written as the restriction of an infinitely differentiable compactly supported function on $\R^{d_h}$, the latter function thus satisfying \eqref{eq:condition:FT:RKHS:matern}. 
	The quantity $- 2 \log \mathbb{P}
	\Bigl(
	\norme{ 
		Z_{h,i}
	}_{\infty} \leqslant  \varepsilon/2
	\Bigr)$ is bounded by $C_{\sup} \varepsilon^{-d/\alpha_{h,i}}$
	from \cite[Lemma 3]{van2011information}. 
Finally, the quantity $\log \mathbb{P}
	\Bigl(
	\norme{ 
		\partial Z_{h,i} / \partial x_j
	}_{\infty}
	\leqslant  K_{\min}/4	\Bigr)$ does not depend on $\varepsilon$.
\end{proof}

Hence, we obtain from $\alpha_{h,i} \geqslant \beta$ for $(h,i) \in \mathcal{I}$ that 
\begin{align*}
\Phi_{c,z_0}(\varepsilon) & \leqslant C_{\sup} \varepsilon^{-d/ \min_{(h,i) \in \mathcal{I}} \alpha_{h,i}}
+ C_{\sup} 
\varepsilon^{-( 2 \alpha_{1,1} + d - 2 \beta )/ \beta} \leqslant 
C_{\sup} 
\varepsilon^{-( 2 \alpha_{1,1} + d - 2 \beta )/ \beta}.
\end{align*}
Hence, as in the proof of Theorems \ref{theorem:rate:holder} and \ref{theorem:rate:riemann:liouville}, without loss of generality, one may assume that $C_{\sup}>1$ and thus we have
\[
\Phi_{c,z_0} \left(
C_{\sup} 
n^{-  \beta /
	(2 \alpha_{1,\min}  + d)}
  \right) 
\leqslant
n
C_{\sup}^2 n^{ - 2 \beta /
	(2 \alpha_{1,\min} +d)}. 
\]
Hence, the conclusion of Theorem \ref{th:Th3.1} holds with $\varepsilon_{c,n}=C_{\sup} 
n^{-  \beta /
	(2 \alpha_{1,\min}  + d)}$. 
The proof for the conclusion of Theorem \ref{th:Th3.2} is the same.

\paragraph{Acknowledgments}

We are very grateful to Jean-Marc Azaïs, Ismaël Castillo, Michel Ledoux and Thibault Randrianarisoa for fruitful discussions, technical advices and constructive comments. We acknowledge support from the ANR, with the project GAP.

\bibliographystyle{abbrv}
\bibliography{biblio_proc_gauss} 
\end{document}